\documentclass[10pt,leqno]{article}

\usepackage{amsmath,amssymb,amsthm,mathrsfs,dsfont,graphicx}

\usepackage[margin=3cm]{geometry} %¿í°æÊ±ÓÃ£¬ÅäºÏË«±¶ÐÐ¾à

\usepackage{titlesec,hyperref}

\usepackage{color}
\usepackage{fancyhdr}
\usepackage{amsmath,amssymb} % ?????????\mathbb{R}?
\usepackage{diagbox} % ???????\diagbox??

\titleformat{\subsection}{\it}{\thesubsection.\enspace}{1pt}{}

\newtheorem{theo}{Theorem}[section]
\newtheorem{lemm}[theo]{Lemma}
\newtheorem{defi}[theo]{Definition}

\newtheorem{prop}[theo]{Proposition}
\newtheorem{rema}[theo]{Remark}
\numberwithin{equation}{section}

\allowdisplaybreaks %ÔÊÐí¹«Ê½·ÖÒ³ÏÔÊ¾

\makeatletter
\tagsleft@false  % ??????
\makeatother
\begin{document}
\title{Blow-up phemomenon for the 3-component Degasperis-Procesi equation
\hspace{-4mm}
}

\author{Song $\mbox{Liu}^1$ \footnote{Email: lius37@mail2.sysu.edu.cn} \quad and\quad
	Zhaoyang $\mbox{Yin}^1$\footnote{E-mail: mcsyzy@mail.sysu.edu.cn}\\
	$^1\mbox{School}$ of Science,\\ Shenzhen Campus of Sun Yat-sen University, Shenzhen, 518107, China}

\date{}
\maketitle
\hrule

\begin{abstract}
In this paper, we consider the Cauchy problem of the 3-component Degasperis-Procesi equation. Firstly, we discuss a local well-posedness result and a blow-up criterion in the low besov space. Secondly, we study the blow-up phenomenon by using the method which does not require any conservation law. Finally, we investigate some persistence properties.   \\
\vspace*{5pt}
\noindent {\it 2020 Mathematics Subject Classification}: 35B44, 35L05, 35G05.%pde??????????????pde

\vspace*{5pt}
\noindent{\it Keywords}: 3-component Degasperis-Procesi equation, blow-up, persistence property.
\end{abstract}

\vspace*{10pt}

%\phantomsection
%\addcontentsline{toc}{section}{\contentsname}
%Ìí¼ÓÄ¿Â¼µ½ÊéÇ©
\tableofcontents

\section{Introduction}
In this paper, we study the Cauchy problem for the 3-component Degasperis-Procesi (DP) equation \cite{Li2023}
	\begin{align}\label{(1.1)}
		\begin{cases}
			\rho_t+(\rho uv)_x=0,\\
			m_t+uvm_x+3vu_xm+\rho^2u=0,\\
			n_t+uvn_x+3v_xun-\rho^2v=0,\\
			m=u-u_{xx},\quad n=v-v_{xx}.
		\end{cases}
	\end{align}
The 3-component Degasperis-Procesi (DP) equation was first constructed by Li in \cite{Li2023}. Li and hu studied the well-posedness and blow-up criteria of the 3-component DP equation in \cite{Li2020local}. li showed the degenerate form of the 3-component DP equation and found infinitely many conserved quantities for the degenerate system in \cite{li2018Degenerate}.

For \(v = 1\) and \(\rho=0,\) the system \eqref{(1.1)} reduces to the well-known Degasperis-Procesi (DP) equation \cite{degasperis1999asymptotic} %2
\begin{align*}
	m_t + u m_x + 3u_x m = 0, \quad m = u - u_{xx}.
\end{align*}
The DP equation is regarded as an alternative model for describing nonlinear shallow water dynamics \cite{constantin2010inverse,constantin2009hydrodynamical}. As demonstrated in \cite{degasperis2002new}, this equation has a bi-Hamiltonian structure and infinite conservation laws, and it possesses peakon solutions similar to those of the Camassa-Holm (CH) equation. The CH equation, defined as follows \cite{camassa1993integrable,camassa1994new}, 
\begin{align*}
	m_t + u m_x + 2u_x m = 0, \quad m = u - u_{xx},
\end{align*}
is analogous to the DP equation and has long been a standard for studying peakon movement, integrable structures and singularity formation in nonlinear dispersive systems \cite{bressan2007global,danchin2001remarks}. Similar to the CH equation, the DP equation can be extended to a completely integrable hierarchy through a \(3 \times 3\) matrix Lax pair, which enables an involutive representation of solutions under a Neumann constraint on a symplectic submanifold \cite{qiao2004integrable}; further investigations have verified the existence of algebro-geometric solutions for this \(3 \times 3\) integrable system \cite{hou2013algebro}, based on similar results for the CH equation's \(2 \times 2\) Lax pair formulation \cite{holm1998euler}. Lots of research has been devoted to the Cauchy problem and initial-boundary value problem of the DP equation, as reported in \cite{coclite2006well,escher2009initial,yin2003cauchy,yin2004global}.

For $v \equiv u$ and $\rho=0$, the system \eqref{(1.1)} becomes the following Novikov equation which was proposed in \cite{novikov2009generalizations} %2
\begin{align*}
	m_t + u^2 m_x + 3u u_x m = 0, \quad m = u - u_{xx}.
\end{align*}
Notably, the Novikov equation is an integrable peakon system with cubic nonlineaity admitting peakon solutions. Furthermore, extensive investigations have been conducted on its well-posedness, blow-up phenomena, and ill-posedness, as documented in \cite{himonas2012cauchy,himonas2013holder,Li2025}.

Finally,  for $\rho=0$, the system \eqref{(1.1)} becomes the following Geng-Xue system \cite{geng2009extension} %1
\begin{align*}
	\begin{cases}m_t+u\upsilon m_x+3\upsilon u_xm=0,\\n_t+u\upsilon n_x+3u\upsilon_xn=0,\\m=u-u_{xx},\quad n=v-v_{xx}.\end{cases}
\end{align*}
The Geng-Xue system was first constructed by Geng and Xue. The authors established its Hamiltonian structure and proved that it also admits peakons. Himonas and Mantzavinos studied the well-posedness of the Geng-Xue equations to the Sobolev space $H^s$ with $s>\frac{3}{2}$ and showed the data-to-solution map is not uniformly continuous in \cite{himonas2016novikov}. Qiao et al. showed the persistence property and the blow-up criteria of the Geng-Xue system in \cite{chen2019persistence}. Lundmark and Szmigielski solved a spectral and an inverse spectral problem the related to the Geng-Xue system in \cite{lundmark2016inverse}. 

In this paper, we consider the following Cauchy problem for \eqref{(1.1)}
   \begin{align}\label{(1.2)}
	\begin{cases}
		\eta_t+(\eta uv)_x+\left(uv\right)_x=0,\\
		m_t+uvm_x+3vu_xm+\left(\eta+1\right)^2u=0,\\
		n_t+uvn_x+3v_xun-\left(\eta+1\right)^2v=0,\\
		m=u-u_{xx},\quad n=v-v_{xx},\\
		\left(\eta\left(t\right),u\left(t\right),v\left(t\right)\right)|_{t=0}=\left(\eta_0,u_0,v_0\right).
	\end{cases}
\end{align}
where $\eta = \rho-1.$
Motivated by \cite{himonas2016novikov}, using more precise bony decomposition, we obtain a local well-posedness result of the 3-component DP equation in $H^s$ with $s>\frac{3}{2}$ and a new blow-up criterion in $B^2_{2,1}$. Owing to the lack of suitable conservation law, no blow-up result for \eqref{(1.2)} is available until now. Inspired by \cite{Meng2024828}, we observe that the term $vu_x^2 + vuu_{xx}$ can be controlled by $v_xuu_x$ when $u_{xt}$ lies in $L^1$. Based on this, we first derive a local bound of $u_{xt}$ in $L^1$ and obtain a blow-up result for the 3-component DP system. Finally, inspired by \cite{du2024some} and \cite{chen2019persistence}, by choosing suitable weighted function, we attain some persistence properties and asymptotic behaviors of the solutions to \eqref{(1.1)} if the initial data decay at infinity.

This paper is organized as follows. In Section 2, we provide some preliminary definitions and lemmas. In Section 3, we state the local well-posedness of \eqref{(1.2)} in $H^s$ with $s>\frac{3}{2}$ and obtain a new blow-up criterion in $B^2_{2,1}$. In Section 4, we investigate a blow-up result for the 3-component DP system. In Section 5, we discuss the persistence property of strong solution.

\textbf{Notation}: Here, we introduce some notations that will be used throughout this article. If there is no ambiguity, we drop $\mathbb{R}$ in our notation of function. $\| \cdot\|_{\omega}$ stands for the norm of Banach space $\omega$. Denote
\[
f(x) \sim O(g(x)) \quad \text{as } x \to \infty, \quad \text{if} \quad \lim_{x\to\infty} \frac{|f(x)|}{|g(x)|} \le M,
\]
and
\[
f(x) \sim o(g(x)) \quad \text{as } x \to \infty, \quad \text{if} \quad \lim_{x\to\infty} \frac{|f(x)|}{|g(x)|} = 0,
\]
where $M$ is a positive constant.

  \subsection{Main result}
    Now using the Green function $p(x)\triangleq\frac{1}{2}e^{-|x|},x\in\mathbb{R}$ and the identity $D^{-2}f=p*f$ for all $f\in L^2$ with $D^{s}= (1-\partial_x^2)^{\frac{s}{2}}$,  we can rewrite system \eqref{(1.2)} with the initial data $(u_0,v_0,\eta_0)$ as the following form
    \begin{align}\label{(1.3)}
    	\begin{cases}
    		\eta_t+(\eta uv)_x+\left(uv\right)_x=0,\\
    		u_t + uvu_x + p * (3 u v u_x + 2uv_x u_{xx} + 2u_x^2 v_x + uv_{xx}u_x + \left(\eta+1\right)^2u ) = 0, \\
    		v_t + vuv_x + p * (3 v u v_x + 2vu_x v_{xx} + 2v_x^2 u_x + vu_{xx}v_x - \left(\eta+1\right)^2v ) = 0, \\
    		u(0, x) = u_0(x), \quad v(0, x) = v_0(x).
    	\end{cases}
    \end{align}
	We then have the following result for the 3-component Degasperis-Procesi system.
	\begin{theo}\(\text{(local well-posedness)}\)\label{Theorem 1.1}
		If $s>\frac{3}{2}$ and $(u_0,v_0,\rho_{0}-1) \in H^s\times H^s\times H^s$ on the line or the circle, then there exists $T>0$ and a unique $(u,v,\rho-1)\in C([0,T];(H^s)^3)$ of the system \eqref{(1.3)} satisfying the following size estimate and lifespan
		\begin{align*}
			\| (u,v,\rho-1)\|_{H^s} \le \sqrt{2}\| (u_0,v_0,\rho_{0}-1)\|_{H^s}, \quad \text{for} \quad 0\le t\le T=\frac{1}{4c_s\| (u_0,v_0,\rho_{0}-1)\|_{H^s}^2},
		\end{align*} 
		where $c_s >0$ is a constant depending on $s$. Furthermore, the data-to-solution map is continuous but not uniformly continuous.
	\end{theo}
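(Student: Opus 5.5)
The plan is to follow the Galerkin-type approach of Himonas and Mantzavinos for the Geng--Xue system \cite{himonas2016novikov}, adapted to include the $\eta$-equation. Set $w=(u,v,\eta)$ and write \eqref{(1.3)} as a transport system
\begin{align*}
\partial_t w + uv\,\partial_x w = F(w),
\end{align*}
where the first two components of $F$ are the nonlocal terms $-p*(\cdots)$ and the third is $-(\eta+1)(uv)_x$.

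The first step is the \emph{a priori} estimate. I would apply $D^s$ to each equation, pair with $D^s w$ in $L^2$, and use the Kato--Ponce commutator estimate. For $s>\frac32$ this gives
\begin{align*}
\|[D^s,uv]\partial_x w\|_{L^2}\lesssim \|(uv)_x\|_{L^\infty}\|w\|_{H^{s}}+\|uv\|_{H^s}\|\partial_x w\|_{L^\infty},
\end{align*}
while integration by parts handles $\int uv\,D^s\partial_x w\,D^s w=-\frac12\int (uv)_x |D^s w|^2$. For the nonlocal terms, $p*$ gains two derivatives. So terms such as $p*(uv_xu_{xx})$ are rewritten as $\partial_x(\cdot)$ of products with at most one derivative on each factor, for instance $uv_xu_{xx}=\frac12(uv_xu_x^2)_x'$-type rearrangements, or are treated via $\|p*\partial_x f\|_{H^s}\lesssim\|f\|_{H^{s-1}}$. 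The algebra property of $H^{s-1}$ for $s-1>\frac12$ then gives $\|F_{1,2}\|_{H^s}\lesssim \|w\|_{H^s}^3+\|w\|_{H^s}(1+\|w\|_{H^s})^2$.

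The $\eta$-equation has the term $\eta(uv)_x$, which loses no derivative. The term $(uv)_x$ is also in $H^{s-1}$ only, but $uv\,\eta_x$ is handled by the commutator, and the remaining $(\eta+1)(uv)_x$ is estimated with the Moser inequality. The honest issue is that $(uv)_x\in H^{s-1}$ only, and the real estimate would be cubic-plus-lower-order. To obtain exactly the stated lifespan one expects an inequality of the form
\begin{align*}
\frac{d}{dt}\|w\|_{H^s}^2\le 2c_s\|w\|_{H^s}^4 .
\end{align*}
Since the linear terms $(uv)_x$ and $\pm u,\pm v$ appear, I would absorb them with $c_s$ adjusted, or accept the stated form with lower-order terms folded in. Integrating gives $\|w(t)\|^2\le \|w_0\|^2/(1-2c_st\|w_0\|^2)\le 2\|w_0\|^2$ for $t\le T$.

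Existence then follows by mollifying the data and solving the ODE, applying the uniform bound, and passing to the limit by compactness (Aubin--Lions) on $[0,T]$. Uniqueness and Lipschitz dependence in $H^{s-1}$ come from the difference estimate in $H^{s-1}$ (or $L^2$ when $s$ is close to $\frac32$, using $H^{s-1}\cdot H^{s}$ products). Continuity in $H^s$ uses the Bona--Smith argument: compare the solutions with data $J_\epsilon w_0$, use the high-frequency decay $\|J_\epsilon w_0-w_0\|_{H^s}=o(1)$ and $\|J_\epsilon w_0\|_{H^{s+1}}\lesssim\epsilon^{-1}$, and conclude with a triangle inequality.

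Non-uniform continuity uses approximate solutions built from high-frequency waves,
\begin{align*}
u^{\omega,\lambda}=\omega\lambda^{-1}\tilde\phi(x/\lambda^\delta)+\lambda^{-\delta/2-s}\phi(x/\lambda^\delta)\cos(\lambda x-\omega t),
\end{align*}
with $\omega=\pm1$, $v$ chosen similarly (e.g.\ $v=u$) and $\eta=0$. The step I expect to be the main obstacle is adapting this to $\rho$. With $\eta\equiv0$ the source $\rho^2 u$ is order zero and produces an $O(\lambda^{-s-\delta/2}\cdots)$ error, so it does not ruin the construction. I would compute the residual $E$ in $H^{\sigma}$ for some $\sigma\in(\frac12,s)$ and show $\|E\|_{H^\sigma}\lesssim\lambda^{-r}$ with $r>0$. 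By the energy estimate the true solutions then stay close to the approximate ones, while at $t>0$ the two families for $\omega=\pm1$ separate by $\gtrsim|\sin t|$ in $H^s$, even though their data converge. The periodic case uses $\cos(nx-\omega t)$ plus constant low modes.
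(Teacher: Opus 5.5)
You follow the same route as the paper: the Himonas--Mantzavinos mollified-ODE scheme, $D^s$ energy estimates with the Kato--Ponce commutator, then the standard convergence, Bona--Smith and approximate-solution arguments. However, the a priori estimate does not close, and it fails at exactly the two steps you wave through.

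The first failure is the $\eta$-equation. It is not true that $\eta(uv)_x$ loses no derivative. Write the equation as $\eta_t+uv\,\eta_x=-(\eta+1)(uv)_x$. The commutator handles $uv\,\eta_x$. But any $H^s$ bound on the source $(\eta+1)(uv)_x$, Moser or otherwise, needs $\|(uv)_x\|_{H^s}$, i.e.\ $uv\in H^{s+1}$, while $u,v$ are only in $H^s$. Moreover, $\eta$ enters the $u,v$ equations only through $p*$, so there is no cancellation to exploit. Hence $\frac{d}{dt}\|\eta\|_{H^s}$ is not bounded by any function of $\|w\|_{H^s}$.

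This is a genuine loss, not a technicality. For $\rho_0\equiv1$ the conservation law gives $\rho(t,q(t,x))\,q_x(t,x)=1$, with $q_x-1\approx t\,(u_0v_0)_x$ to leading order. That is generically only in $H^{s-1}$ when $u_0v_0\notin H^{s+1}$. So the density naturally lives in $H^{s-1}$, which is what the paper itself assumes in Theorems \ref{Theorem 1.5}--\ref{Theorem 1.8}. The paper's \eqref{(3.3)} hides the same problem: $\|(uv)_x\|_{H^s}$ appears on the right and is then treated in \eqref{(3.12)} as if it were bounded by $\|u\|_{H^s}\|v\|_{H^s}$.

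A smaller point concerns the nonlocal term, which the algebra property of $H^{s-1}$ alone cannot handle. One has $2uv_xu_{xx}+2u_x^2v_x+uu_xv_{xx}=2(uu_xv_x)_x-uu_xv_{xx}$, and the second derivative in $uu_xv_{xx}$ cannot be integrated away; your identity for $uv_xu_{xx}$ is false. You need $\|fg\|_{H^{\sigma-1}}\lesssim\|f\|_{H^{\sigma}}\|g\|_{H^{\sigma-1}}$ with $\sigma=s-1>\frac12$, i.e.\ Lemma \ref{Lemma 2.8}, as in \eqref{(3.8)}.

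The second failure is that the linear terms cannot be absorbed into $2c_s\|w\|_{H^s}^4$ by adjusting $c_s$. The sources $(\eta+1)^2u$ and $-(\eta+1)^2v$ contain $p*u$ and $-p*v$. These contribute a term of size $\|w\|_{H^s}^2$ to $\frac{d}{dt}\|w\|_{H^s}^2$, which dominates $\|w\|_{H^s}^4$ for small data. Also, $(uv)_x$ is not linear; it is the derivative-losing quadratic term from above.

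In fact the size estimate as stated is false. Take $u_0=0$ and $\rho_0\equiv1$. The solution is $u\equiv0$, $\rho\equiv1$, with $v_t=(1-\partial_x^2)^{-1}v$. If $\hat v_0$ is supported in $|\xi|\le\frac12$, then $\|v(1)\|_{H^s}\ge e^{4/5}\|v_0\|_{H^s}>\sqrt2\|v_0\|_{H^s}$. Yet $T=\frac{1}{4c_s\|v_0\|_{H^s}^2}>1$ once $v_0$ is small. On the circle, $v_0\equiv\delta$ gives $v=\delta e^{t}$. The paper's proof has the same hole: \eqref{(3.12)} carries a $+1$ that silently disappears when passing to $\|U_\epsilon\|_{H^s}\le\sqrt2\|U_0\|_{H^s}$.

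What the energy method can actually give works in $H^s\times H^s\times H^{s-1}$, with $E=\|u\|_{H^s}+\|v\|_{H^s}+\|\eta\|_{H^{s-1}}$ and a Calder\'on-type commutator estimate for $[D^{s-1},uv]\partial_x$. It yields $\frac{d}{dt}E\le C(1+E)^3$. Hence you get a lifespan of order $(1+E(0))^{-2}$ and the bound $1+E(t)\le\sqrt2\,(1+E(0))$. Your existence, uniqueness, Bona--Smith and non-uniform-continuity steps must then be carried out in that space, with the $\eta$-component measured in $H^{s-1}$, not in $(H^s)^3$.
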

	\begin{theo}\(\text{(Blow-up criteria)}\)\label{Theorem 1.2}
		Let $(\rho_0-1,u_0,v_0) \in B^2_{2,1}\times B^2_{2,1}\times B^2_{2,1} $ and $T^*$ be the maximal existence time of the solution $(\rho-1,u,v)$ to the system \eqref{(1.3)}. If $T < \infty$, then
		\[
		\int_{0}^{T} (\| u\|_{W^{1,\infty}}\| v\|_{W^{1,\infty}} + \| \eta +1\|_{L^{\infty}}^2) dt = \infty.
		\]
	\end{theo}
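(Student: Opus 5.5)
The plan is a standard Besov-space energy estimate for the transport system, closed with the logarithmic (Kato–Ponce/Brezis–Gallouet type) inequality at the critical regularity $B^2_{2,1}$, which is an algebra embedded in $W^{1,\infty}$. Set $U(t)=\|\eta\|_{B^{1}_{2,1}}+\|u\|_{B^2_{2,1}}+\|v\|_{B^2_{2,1}}$; one could instead measure $\eta$ in $B^2_{2,1}$, but working one derivative lower for the density avoids the loss from $(\eta uv)_x$. I will write the $m,n$ equations of \eqref{(1.2)} as transport equations with velocity $uv$:
\[
m_t+uv\,m_x=-3vu_xm-\rho^2u,\qquad n_t+uv\,n_x=-3uv_xn+\rho^2v,
\]
together with $\eta_t+uv\,\eta_x=-(uv)_x(\eta+1)$. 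Here $m,n\in B^0_{2,1}$ and $\eta\in B^1_{2,1}$.

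First I apply the transport estimates in $B^0_{2,1}$ and $B^1_{2,1}$, with the commutator bounded by $\|\partial_x(uv)\|_{L^\infty}$. For $B^0_{2,1}$, which is a negative-type endpoint, I instead use the version with $\|\partial_x(uv)\|_{B^{1}_{2,1}\cap L^\infty}$. Since $B^1_{2,1}(\mathbb R)\hookrightarrow L^\infty$ is an algebra and $B^0_{2,1}\times B^1_{2,1}\to B^0_{2,1}$ is continuous, the right-hand sides are bounded by
\[
\|v u_x m\|_{B^0_{2,1}}\lesssim \|m\|_{B^0_{2,1}}\|vu_x\|_{L^\infty}+\|vu_x\|_{B^1_{2,1}}\|m\|_{B^{0}_{2,\infty}}\cap\ldots
\]
To obtain linear dependence on $U$ multiplied by the $W^{1,\infty}$ norms, I will use Bony decomposition. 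The paradifferential part $T_{vu_x}m$ is controlled by $\|vu_x\|_{L^\infty}\|m\|_{B^0_{2,1}}$. The remainder and $T_m(vu_x)$ are controlled using $\|m\|_{B^{-1}_{\infty,\infty}}\lesssim\|u\|_{W^{1,\infty}}$ and $\|vu_x\|_{B^1_{2,1}}$, and this is exactly where $B^0_{2,1}$ is critical.

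I expect this to be the main obstacle. At the endpoint $s=0$, the product and commutator estimates fail to be tame in $L^\infty$ norms by a logarithm. I will handle it as in the Camassa–Holm $B^{3/2}_{2,1}$ critical theory: prove
\[
\frac{d}{dt}U\lesssim \big(\|u\|_{W^{1,\infty}}\|v\|_{W^{1,\infty}}+\|\eta+1\|_{L^\infty}^2\big)\,U\,\log\!\big(e+U\big),
\]
using the logarithmic interpolation $\|f\|_{B^0_{2,1}}\lesssim\|f\|_{B^0_{2,\infty}}\log\!\big(e+\|f\|_{B^{\epsilon}_{2,\infty}}/\|f\|_{B^0_{2,\infty}}\big)$. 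Alternatively, I will estimate in $B^{2}_{2,\infty}\cap H^{2}$-type norms and interpolate. The terms $\rho^2u$ and $\rho^2 v$ are lower order: $\|\rho^2u\|_{B^0_{2,1}}\lesssim\|\rho\|_{L^\infty}^2\|u\|_{B^{1}_{2,1}}+\|\rho\|_{L^\infty}\|\eta\|_{B^1_{2,1}}\|u\|_{L^\infty}$. For $\eta$, the term $(uv)_x\eta$ in $B^1_{2,1}$ is bounded by $\|uv\|_{W^{1,\infty}}\|\eta\|_{B^1_{2,1}}+\|(uv)_x\|_{B^1_{2,1}}\|\eta\|_{L^\infty}$, and the second piece is of the form $\|u\|_{W^{1,\infty}}\|v\|_{B^2_{2,1}}+\ldots$, linear in $U$. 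Mixed products such as $\|u\|_{W^{1,\infty}}\|\eta\|_{L^\infty}$ are absorbed by $ab\le a^2+b^2$ and a weight adjustment, namely by bounding each factor pair by the integrand.

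Finally, Gronwall for $\log\log$ gives
\[
\log\log(e+U(t))\le \log\log(e+U(0))+C\int_0^t(\cdots)\,d\tau.
\]
If $T^*<\infty$ and the integral stayed finite, $U$ would remain bounded on $[0,T^*)$. The local well-posedness in $B^2_{2,1}$ (the Besov analogue of Theorem \ref{Theorem 1.1}, whose lifespan depends only on the norm) would then extend the solution beyond $T^*$, a contradiction.
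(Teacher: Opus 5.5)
Your route differs from the paper's. The paper bounds $\|u_t\|_{B^2_{2,1}}$, $\|v_t\|_{B^2_{2,1}}$ and $\|\eta_t\|_{B^2_{2,1}}$ directly via Bony's decomposition and closes a linear Gronwall inequality, with no logarithm. Your whole argument instead rests on
\[
\frac{d}{dt}U\lesssim\big(\|u\|_{W^{1,\infty}}\|v\|_{W^{1,\infty}}+\|\eta+1\|_{L^\infty}^2\big)\,U\log(e+U),
\]
and this inequality is not established by your estimates. It is in fact false for your $U$.

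The failing step is the coefficient bookkeeping, which you settle with ``$ab\le a^2+b^2$ and a weight adjustment''. The integrand is a product, and AM--GM produces $\|u\|_{W^{1,\infty}}^2$ or $\|v\|_{W^{1,\infty}}^2$, which the product does not dominate. Only additive constants are harmless, since $T<\infty$. Your sketch generates such coefficients repeatedly: $\|(uv)_x\|_{B^1_{2,1}}\|\eta\|_{L^\infty}$, the term $\|\rho\|_{L^\infty}\|u\|_{L^\infty}\|\eta\|_{B^1_{2,1}}$ for $\rho^2u$, and the $\|u\|_{W^{1,\infty}}^2\|v\|_{B^1_{2,1}}$ hidden in $\|m\|_{B^{-1}_{\infty,\infty}}\|vu_x\|_{B^1_{2,1}}$.

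Some of these can be fixed by regrouping. For instance, $\|vu_xm\|_{B^0_{2,1}}\lesssim\|v\|_{W^{1,\infty}}\|u\|_{W^{1,\infty}}\|u\|_{B^2_{2,1}}$, because $W^{1,\infty}$ multiplies $B^0_{2,1}$ and $u_xm=\frac12(u^2)_x-\frac12(u_x^2)_x$. The pure forcing $-(uv)_x$ in the $\eta$-equation cannot be fixed, and your $\eta$-estimate does not address it (you only treat $(uv)_x\eta$). Take $\eta_0=0$, $u_0=A\phi$, $v_0=AN^{-2}\phi\cos(Nx)$, with $\phi$ a fixed bump, $A$ large and $N\ge A^3$. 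Then:
the integrand is $\approx 1$ and $U(0)\approx A$;
the right derivative of $\|\eta\|_{B^1_{2,1}}$ at $t=0$ is $\|(u_0v_0)_x\|_{B^1_{2,1}}\approx A^2$;
and $\|m_t\|_{B^0_{2,1}}+\|n_t\|_{B^0_{2,1}}=O(A)$ at $t=0$.
So no constant gives $\frac{d}{dt}U\lesssim A\log A$, and a single Gronwall quantity containing $\|\eta\|_{B^1_{2,1}}$ cannot close with this integrand. What your estimates naturally yield is a criterion with something like $\|u\|_{W^{1,\infty}}^2+\|v\|_{W^{1,\infty}}^2$ in place of the product.

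The paper does not supply the missing idea either. Its estimate \eqref{(4.4)} drops the forcing $(uv)_x$. Its bound on $T_{u_{xx}}(uv_x)$ discards the $\|u\|_{L^\infty}\|v\|_{B^2_{2,1}}$ part of $\|uv_x\|_{B^1_{2,1}}$. It also applies the remainder estimate in $B^0_{2,1}$ at $s_1+s_2=0$, where Lemma \ref{Lemma 2.5} gives only $B^0_{2,\infty}$; that is exactly the endpoint you correctly flagged.

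Three further steps need repair.

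(i) You use the $B^0_{2,1}$ transport estimate with $V'=\|\partial_x(uv)\|_{B^1_{2,1}\cap L^\infty}$. But $V'\lesssim\|u\|_{L^\infty}\|v\|_{B^2_{2,1}}+\|v\|_{L^\infty}\|u\|_{B^2_{2,1}}$ grows linearly in $U$, so $V'\|m\|_{B^0_{2,1}}$ is quadratic in $U$ and yields no criterion. At $s=0$ the commutator has to be estimated with $\|\partial_x(uv)\|_{L^\infty}$ only, with the non-transport remainder handled by the same logarithmic splitting.

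(ii) The logarithmic interpolation needs a norm above $B^0_{2,1}$ inside the logarithm. For $f=m$ that means $u\in B^{2+\epsilon}_{2,\infty}$, which $U$ does not control. It does work on pieces that gain regularity, e.g.\ $R(m,vu_x)\in B^{1/2}_{2,1}$ with norm $\lesssim U^3$. This is possible because $B^2_{2,1}$ sits half a derivative above the $W^{1,\infty}$-critical space $B^{3/2}_{2,1}$; it is not critical, as you claim.

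(iii) Measuring $\eta$ in $B^1_{2,1}$ is the right choice, since the forcing $(uv)_x$ lies only in $B^1_{2,1}$, so $\eta$ cannot be propagated in $B^2_{2,1}$. But then the continuation step needs local well-posedness and a maximal existence time in $B^1_{2,1}\times B^2_{2,1}\times B^2_{2,1}$. That is neither the setting of the statement nor a Besov analogue of Theorem \ref{Theorem 1.1}, so you must state and justify it.
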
  
	
	\begin{rema}\label{Remark 1.2}
		For the Sobolev space, we have a similar result as follows Let $(u_0,v_0,\rho_0-1) \in H^s \times H^s\times H^s$ with $s > 2$ and $T$ be the maximal existence time of the solution $(u,v,\rho-1)$ to the system \eqref{(1.3)}. If $T < \infty$, then
		\[
		\int_{0}^{T} (\| u\|_{W^{1,\infty}}\| v\|_{W^{1,\infty}} + \| \eta +1\|_{L^{\infty}}^2) dt = \infty.
		\]
	\end{rema}
  \begin{theo}\(\text{(Blow-up)}\)\label{Theorem 1.4}
  	Assume that $u_0 \in W^{1,1} \cap H^s$ and $v_0 \in H^s$ with $s > \frac{5}{2}$. Let $T^*$ be the maximal existence time of the corresponding strong solution $u$ to system \eqref{(1.3)}. Fixed some $T_0 \in (0,T_2)$ and there exist a point $x_0 \in \mathbb{R}$ such that
  	\begin{align*}
  		v_0(x_0) \ge 0,
  	\end{align*}
  	and
  	\begin{align}\label{(1.4)}
  		u_{0,x}\left(x_0\right) \le 2\frac{1+e^{\sqrt{v_0(x_0)b_1}T_2}}{1-e^{\sqrt{v_0(x_0)b_1}T_2}}\sqrt{\frac{b_1}{v_0\left(x_0\right)}},
  	\end{align}
  	with
  	\begin{align*}
  			b_1=\frac{1}{4v_0\left(x_0\right)}\left(\frac{\| \eta_0\|_{W^{1,1}}}{2}+1+\Vert u_0\Vert_{W^{1,1}}+\Vert n_0\Vert_{L^{\infty}}\right)^4 + 6\left(\frac{\| \eta_0\|_{W^{1,1}}}{2}+1+\Vert u_0\Vert_{W^{1,1}}+\Vert n_0\Vert_{L^{\infty}}\right)^3.
  	\end{align*}
  	Then the strong solution $(u,v,\rho)$ blows up in finite time with $T^* \le T_0.$ 
  \end{theo}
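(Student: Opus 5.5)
We will follow $u_x$ along the characteristic issuing from $x_0$ and show that it satisfies a Riccati-type differential inequality, which forces blow-up before $T_0$. Let $q(t,x)$ be the flow of the velocity $uv$:
\begin{align*}
q_t(t,x)=u(t,q(t,x))\,v(t,q(t,x)),\qquad q(0,x)=x .
\end{align*}
This flow is a well-defined increasing diffeomorphism on $[0,T^*)$ because $uv\in C([0,T^*);C^1)$ for $s>\frac52$ by Theorem \ref{Theorem 1.1}.

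\textbf{Step 1: evolution of $M(t)=u_x(t,q(t,x_0))$ and $V(t)=v(t,q(t,x_0))$.} Differentiating the $u$-equation of \eqref{(1.3)} in $x$ and using $\partial_x^2 p*f=p*f-f$, we obtain
\begin{align*}
u_{xt}+uvu_{xx}=-v u_x^2-u u_x v_x-\partial_x p*(\cdots),
\end{align*}
where the local terms $(3uvu_x+\dots+(\eta+1)^2u)$ returned by $\partial_x^2p*$ partly cancel. Along $q$ this becomes
\begin{align*}
M'=-V M^2+\mathcal R(t),
\end{align*}
where $\mathcal R$ collects $uu_xv_x$, $(\eta+1)^2u$ and the nonlocal terms. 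Similarly, the $v$-equation gives $V'=-p*(\cdots)-\partial_x p*(\cdots)$.

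\textbf{Step 2: a priori bounds without conservation laws.} This is the heart of the argument, following \cite{Meng2024828}.
\begin{itemize}
\item From the $\rho$-equation, $\rho$ is transported in conservative form, so $\|\rho\|_{L^1}$ is conserved and $\eta_x$ can be handled similarly. This yields the factor $\frac{\|\eta_0\|_{W^{1,1}}}{2}+1$, coming from $\|\eta\|_{L^\infty}\le \frac12\|\eta\|_{W^{1,1}}$.
\item For $u$, we estimate $\|u\|_{W^{1,1}}$ via $m$: write $m_t+(uvm)_x=-2vu_xm-v_xum\cdots$. The key observation from the introduction is that the awkward terms $vu_x^2+vuu_{xx}$ can be rewritten using $u_{xt}$ and $v_xuu_x$, and we need a local-in-time bound for $\|u_{xt}\|_{L^1}$.
\item For $n$, we bound $\|n\|_{L^\infty}$ along characteristics.
\end{itemize}
The target is to show that, on a short interval $[0,T_2]$, the quantity
\begin{align*}
K:=\frac{\|\eta_0\|_{W^{1,1}}}{2}+1+\|u_0\|_{W^{1,1}}+\|n_0\|_{L^\infty}
\end{align*}
controls $\|u\|_{L^\infty},\|u_x\|_{L^\infty},\|v\|_{L^\infty},\|v_x\|_{L^\infty},\|\eta+1\|_{L^\infty}$ up to fixed constants. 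Then $|\mathcal R|\le 6K^3$ plus a term $K^4$ coming from $uu_xv_x$, which after Young's inequality against $VM^2$ produces $\frac{1}{4V}K^4$. This explains $b_1$.

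\textbf{Step 3: controlling $V$ and the Riccati comparison.} We need $V(t)$ to stay close to $v_0(x_0)\ge0$, or at least bounded below by a positive multiple of it, on $[0,T_0]$; we obtain this from $|V'|\lesssim K^3$ together with the smallness of $T_2$. Then
\begin{align*}
M'\le -\frac{v_0(x_0)}{4}M^2+b_1\quad\text{roughly.}
\end{align*}
Solving the comparison Riccati equation $y'=-ay^2+b$ with $a=v_0(x_0)/4$ gives
\begin{align*}
y=\sqrt{b/a}\,\frac{1+Ce^{-2\sqrt{ab}t}}{1-Ce^{-2\sqrt{ab}t}},
\end{align*}
whose constants match \eqref{(1.4)} with $\sqrt{ab_1}$-type exponents. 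Condition \eqref{(1.4)} ensures $y\to-\infty$ before $T_2$, hence before $T_0$. Finally, Remark \ref{Remark 1.2} converts the blow-up of $\|u_x\|_{L^\infty}$ into $T^*\le T_0$; if $T^*$ were larger, Step 2's bounds would contradict the comparison.

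\textbf{Main obstacle.} The main obstacle is Step 2: obtaining uniform bounds on $\|u\|_{W^{1,1}}$ and $\|n\|_{L^\infty}$ without conservation laws. My plan is a bootstrap on $[0,T_2]$: assume the bounds hold with twice the constant, show via Gronwall that they hold with the stated constant for $T_2$ small (depending on $K$), and close the argument by continuity.
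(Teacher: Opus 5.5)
Your outline has the same architecture as the paper's proof: short-time bounds on $\|n\|_{L^\infty}$, $\|u\|_{W^{1,1}}$, $\|\eta\|_{W^{1,1}}$ (Proposition \ref{Proposition 5.1}), the lower bound $v(t,q(t,x_0))\ge v_0(x_0)/2$ (Proposition \ref{Proposition 5.2}), Young's inequality on $uv_xu_x$, and Lemma \ref{Lemma 2.9}. However, Step 2 has a genuine gap at the $\eta$-estimate, and your bootstrap cannot close it.

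First, $\rho=\eta+1\to1$ at infinity, so $\|\rho\|_{L^1}=\infty$. Also $\|\eta\|_{L^1}$ is not conserved; the conservation form only gives $\frac{d}{dt}\int\eta\,dx=0$.

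More seriously, $\eta_x$ is not ``handled similarly''. Along characteristics $\frac{d}{dt}\rho(t,q)=-\rho\,(uv)_x(t,q)$. Formally, $\frac{d}{dt}\|\eta_x\|_{L^1}=\int\rho\,(uv)_x\,\partial_x\mathrm{sgn}(\eta_x)\,dx$, which consists of point values of $\rho\,(u_xv+uv_x)$ at the extrema of $\eta$. So every sup bound on $\eta$ needs $\int_0^t\|u_x\|_{L^\infty}$, and this dependence is genuine, not an artifact of the estimate.

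To see this, write $\mathcal R=uv_xu_x-\mathcal N$ with $\mathcal N$ the nonlocal part. Along $q=q(t,x)$, the pair $P:=u_x(t,q)\,q_x$ and $q_x$ solves the linear system $P'=2uv_xP-\mathcal N q_x$, $q_x'=vP+uv_xq_x$. Its coefficients are controlled by your Step 2 quantities, and $\rho(t,q)\,q_x=\rho_0(x)$. Hence $|u_x(t,q(t,x))|\le C\,(1+\|u_{0,x}\|_{L^\infty})\,|\rho(t,q(t,x))|/|\rho_0(x)|$.

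Now take $\eta_0\equiv0$, any fixed $v_0$ with $v_0(x_0)>0$, and $u_0=\phi((x-x_0)/\varepsilon)$ with $\phi'(0)<0$ and $\varepsilon$ small. All hypotheses, including \eqref{(1.4)}, hold with $K$ bounded. For these data your Step 2 bounds would keep $\|u_x\|_{L^\infty}$ bounded on $[0,\min(T^*,T_2))$. By Remark \ref{Remark 1.2} this would exclude exactly the blow-up you are proving.

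So the $\eta$-bound is false in general, not merely unproved. Without $\|\eta+1\|_{L^\infty}$ you also lose the $n$-estimate (source $(\eta+1)^2v$) and the bound on $p_x*((\eta+1)^2u)$ in $\mathcal R$. You cannot import the bound from Proposition \ref{Proposition 5.1} either. The integration by parts behind \eqref{(5.5)} drops $\partial_x[\eta_x(\eta_x^2+\epsilon)^{-1/2}]=\epsilon\eta_{xx}(\eta_x^2+\epsilon)^{-3/2}$. This term tends to $\partial_x\mathrm{sgn}(\eta_x)$, not to zero, so dropping it discards precisely the point values of $\rho vu_x$.

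There are also several smaller points.
\begin{itemize}
\item Estimating $\|u\|_{W^{1,1}}$ through $m$ brings in $vu_xm$, which again needs $\|u_x\|_{L^\infty}$. What works is testing the $u_x$-equation against $\mathrm{sgn}(u_x)$, using $\int(vu_x^2+uvu_{xx})\,\mathrm{sgn}(u_x)\,dx=-\int uv_x|u_x|\,dx$; this is the precise content of the ``observation'' you quote.
\item The local term in $\mathcal R$ is $+uv_xu_x$, not $-uu_xv_x$; this is harmless.
\item Your last step, ``before $T_2$, hence before $T_0$'', runs the wrong way, since $T_0<T_2$. With \eqref{(1.4)} as written, Lemma \ref{Lemma 2.9} only gives blow-up by $T_2$. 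Concluding $T^*\le T_0$ requires \eqref{(1.4)} with $T_0$ in place of $T_2$.
\item You need $v_0(x_0)>0$, not $\ge0$, since $a=v_0(x_0)/4$ must be positive and $b_1$ contains $1/v_0(x_0)$.
\end{itemize}
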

  \begin{theo}\label{Theorem 1.5}
  	Suppose the initial data $w_0 = (\rho_0, u_0,v_0)$ belong to $H^{s-1} \times H^{s}\times H^s$. $T = T(w_0) > 0$ is the lifespan of the solution $w$ of \eqref{(1.1)} with $w_0$. If the initial
  	data satisfy for $\beta \in (0,\infty)$
  	\[
  	\|(\rho_0, u_0, u_{0,x},u_{0,xx},v_0, v_{0,x},v_{0,xx})(\ln(e+\beta+|x|))^\beta\|_{L^\infty} \le C_0,
  	\]
  	then, we have
  	\[
  	\|(\rho, u, u_x,u_{xx},v,v_x,v_{xx})(\ln(e+\beta+|x|))^\beta\|_{L^\infty} \le C_1,
  	\]
  	uniformly in $[0,T_0]$ for some $T_0 < T$. The constant $C_1$ depends on $M, C_\beta, C_0$.
  \end{theo}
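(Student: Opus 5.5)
We prove Theorem \ref{Theorem 1.5} with a weighted $L^\infty$ energy estimate in which the weight is truncated so that every quantity is a priori finite, followed by a Gronwall argument. For $N>0$ we set
\begin{align*}
\phi_N(x)=\begin{cases}(\ln(e+\beta+|x|))^\beta, & |x|<N,\\ (\ln(e+\beta+N))^\beta, & |x|\ge N.\end{cases}
\end{align*}
We choose $\beta$ in the weight's argument so that $\phi_N$ is Lipschitz with $|\phi_N'|\le \beta(\ln(e+\beta+|x|))^{\beta-1}/(e+\beta+|x|)\le \phi_N$; indeed $\ln(e+\beta+|x|)\ge 1$, and the shift by $\beta$ guarantees $\beta/((e+\beta+|x|)\ln(e+\beta+|x|))\le 1$. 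The second key property is subadditivity of the convolution against the weight: there is a constant $C_\beta$, independent of $N$, with
\[
\phi_N(x)\int_{\mathbb R} e^{-|x-y|}\frac{1}{\phi_N(y)}\,dy\le C_\beta .
\]
This follows because $\ln(e+\beta+|x|)\le \ln(e+\beta+|y|)+\ln(e+\beta+|x-y|)$ up to constants, so $\phi_N(x)/\phi_N(y)\lesssim (\ln(e+\beta+|x-y|))^{\beta}$ on both sides of the truncation. This ratio is then integrable against $e^{-|x-y|}$. We write $M=\sup_{[0,T_0]}(\|u\|_{W^{2,\infty}}+\|v\|_{W^{2,\infty}}+\|\rho\|_{L^\infty}+\|\rho_x\|_{L^\infty})$. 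It is finite for $T_0<T$ by the Sobolev embedding, using $s$ large enough (e.g.\ $s>5/2$) together with Theorem \ref{Theorem 1.1}.

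Next we differentiate the equations. From \eqref{(1.3)} with $\eta+1=\rho$ we get equations for $u,u_x,u_{xx}$, and symmetrically for $v,v_x,v_{xx}$. The $u_{xx}$ equation is best obtained from the $m$ equation: $m_t+uvm_x=-3vu_xm-\rho^2u$, with $u_{xx}=u-m$. The $\rho$ equation is $\rho_t+uv\rho_x=-(uv)_x\rho$. Each equation has the transport form $f_t+uvf_x=F$. Here $F$ consists of products in which one factor is among the weighted unknowns and the other factors are bounded by $M$, together with the convolution terms $p*(\cdot)$ and $p_x*(\cdot)$ coming from the nonlocal part. Nonlinear terms such as $\rho^2 u$ are handled by putting the weight on a single factor and bounding the rest by $M$.

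We multiply each equation by $\phi_N$ and use the transport identity $(f\phi_N)_t+uv(f\phi_N)_x=\phi_N F+uv\,f\,\phi_N'$, where the last term is bounded by $M^2|f\phi_N|$. Taking the $L^\infty$ norm is done by the standard device of taking $L^{2p}$ estimates and letting $p\to\infty$, or by working along characteristics. For the nonlocal terms we write
\[
\phi_N\,p*(gh)=\phi_N\int p(x-y)\frac{1}{\phi_N(y)}\,\phi_N(y)g(y)h(y)\,dy\le \tfrac{C_\beta}{2} M\|g\phi_N\|_{L^\infty},
\]
and the same bound holds for $p_x*$. For the $u$ and $v$ equations the convolution contains $u_{xx}$ and $v_{xx}$, which are among the weighted unknowns and are therefore allowed. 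Summing everything gives, with $Z_N(t)$ the sum of the weighted sup norms of $(\rho,u,u_x,u_{xx},v,v_x,v_{xx})$,
\[
\frac{d}{dt}Z_N\le C(M,C_\beta)Z_N.
\]
Gronwall then yields $Z_N(t)\le Z_N(0)e^{CT_0}\le C_0e^{CT_0}$. Letting $N\to\infty$ gives the claim with $C_1=C_0e^{C(M,C_\beta)T_0}$.

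The main obstacle is the $\rho$ equation, which involves $\rho_x$. This is the reason we only require $\rho$ to be weighted, with $\rho_x$ merely bounded through $M$. A further difficulty is closing the $u_{xx}$ estimate. Differentiating the $u_x$ equation twice would produce $u_{xxx}$, so instead we estimate $m$ (equivalently $u_{xx}=u-m$) directly from the $m$ equation, which contains no higher derivatives, and bound the weighted $u_{xx}$ by the weighted $m$ plus the weighted $u$. We must also verify the convolution-weight inequality carefully near the truncation point $|x|=N$, using the monotonicity of $\phi_N$ in $|x|$.
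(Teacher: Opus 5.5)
Your proposal is correct and is essentially the paper's argument. Both use the same truncated logarithmic weight, weighted $L^\infty$ bounds obtained as limits of $L^k$ estimates (or along characteristics) with the transport term $uvf_x$ absorbed via $|\psi_N'|\le\gamma\psi_N$, the convolution bound of Lemma~\ref{Lemma 2.10}, Gronwall, and finally $N\to\infty$.

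The differences are cosmetic:
\begin{itemize}
\item You close the $u_{xx}$ estimate through the local $m$-equation, whereas the paper differentiates twice and uses $p_{xx}*f=p*f-f$. Both routes treat the third derivative as a transport term, so neither is harder.
\item You reprove Lemma~\ref{Lemma 2.10} by log-subadditivity instead of citing it.
\item You rightly make explicit that $s>\frac{5}{2}$ is needed for $M$ to control $u_{xx}$, $v_{xx}$ and $\rho_x$.
\end{itemize}
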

  \begin{theo}\label{Theorem 1.6}
  	Under the assumption of Theorem \ref{Theorem 1.5}. If there exists $\beta \in (0,\infty)$ such that the initial data satisfy
  	\[
  		\|(\rho_0,\rho_{0,x}, u_0, u_{0,x},u_{0,xx},v_0, v_{0,x},v_{0,xx})(\ln(e+\beta+|x|))^\beta\|_{L^\infty} \le C_0,
  	\]
  	then, the solutions satisfy
  	\[
  	\|(\rho,\rho_x, u, u_x,u_{xx},v,v_x,v_{xx})(\ln(e+\beta+|x|))^\beta\|_{L^\infty} \le C_1,
  	\]
  	uniformly in $[0,T_0]$ for some $T_0 < T$. The constant $C_1$ depends on $M, C_\beta, C_0$.
  \end{theo}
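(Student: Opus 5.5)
We prove Theorem \ref{Theorem 1.6} by the weighted-$L^\infty$ method already used for Theorem \ref{Theorem 1.5}, adding an equation for $\rho_x$. Let $\varphi(x)=(\ln(e+\beta+|x|))^\beta$. The weight is slowly varying: $|\varphi'(x)|\le \beta\varphi(x)/((e+\beta+|x|)\ln(e+\beta+|x|))\le \varphi(x)$. Moreover, there is a constant $C_\beta$ with
\[
\varphi(x)\int_{\mathbb R}e^{-|x-y|}\frac{1}{\varphi(y)}dy\le C_\beta ,
\]
which follows from the submultiplicativity $\varphi(x)\le C_\beta\varphi(y)(1+|x-y|)^\beta$. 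Put $M=\sup_{[0,T_0]}(\|u\|_{W^{2,\infty}}+\|v\|_{W^{2,\infty}}+\|\rho\|_{W^{1,\infty}})$, which is finite for $s$ large enough by Sobolev embedding. Theorem \ref{Theorem 1.5} already gives the bound for $(\rho,u,u_x,u_{xx},v,v_x,v_{xx})\varphi$. It therefore remains to control $\rho_x\varphi$, with the other weighted quantities available.

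To avoid weights that are not in $L^p$, truncate: $\varphi_N=\min\{\varphi,(\ln(e+\beta+N))^\beta\}$. This satisfies $|\varphi_N'|\le\varphi_N$ a.e. and the same convolution bound uniformly in $N$. Differentiate $\rho_t+(\rho uv)_x=0$ in $x$:
\[
\rho_{xt}+uv\rho_{xx}+2(uv)_x\rho_x+(uv)_{xx}\rho=0 .
\]
Multiply by $(\rho_x\varphi_N)^{2p-1}\varphi_N$ and integrate. The transport term becomes, after integration by parts,
\[
-\frac{1}{2p}\int (uv)_x(\rho_x\varphi_N)^{2p}+\int uv\frac{\varphi_N'}{\varphi_N}(\rho_x\varphi_N)^{2p},
\]
which is bounded by $CM^2\|\rho_x\varphi_N\|_{L^{2p}}^{2p}$. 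The term $2(uv)_x\rho_x$ is treated the same way. The source term $(uv)_{xx}\rho\varphi_N$ is bounded in $L^{2p}$ by $M^2\|\rho\varphi_N\|_{L^{2p}}$, or alternatively by $M\|\rho\|_{L^\infty}\|(u_{xx},v_{xx})\varphi_N\|_{L^{2p}}$. These yield
\[
\frac{d}{dt}\|\rho_x\varphi_N\|_{L^{2p}}\le CM^2\|\rho_x\varphi_N\|_{L^{2p}}+CM^2\|\rho\varphi_N\|_{L^{2p}} .
\]
Here we use the truncated weights because $\varphi_N$ times the solution lies in every $L^{2p}$: $\rho_x\in L^2\cap L^\infty$ and $\varphi_N$ is bounded.

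Apply Gronwall, then let $p\to\infty$, using $\|f\|_{L^{2p}}\to\|f\|_{L^\infty}$ for $f\in L^2\cap L^\infty$, and finally let $N\to\infty$. This gives
\[
\|\rho_x\varphi\|_{L^\infty}\le e^{CM^2T_0}\Big(C_0+CM^2T_0\sup_{[0,T_0]}\|\rho\varphi\|_{L^\infty}\Big)\le C_1 .
\]
The last supremum is controlled by Theorem \ref{Theorem 1.5}. If one instead wants a closed argument, run the $\rho$ equation simultaneously, which has no source term, and sum the two inequalities.

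The main obstacle is the $p\to\infty$ limit when the estimates are taken in $L^{2p}$. This is why the truncation must be done before letting $p\to\infty$: $\varphi$ grows only logarithmically, but not every weighted function is in $L^{2p}$ uniformly. A related subtlety is that $\rho_0\in H^{s-1}$ must give $\rho_x$ enough regularity for the differentiated transport equation. If it does not, we will mollify the data, run the argument for smooth solutions, and pass to the limit using the continuity of the data-to-solution map from Theorem \ref{Theorem 1.1}. The convolution terms with $p$ never enter the $\rho_x$ estimate, so only the transport structure matters.
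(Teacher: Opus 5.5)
Your proposal is correct and follows essentially the same route as the paper. The paper also differentiates the $\rho$-equation and runs a weighted $L^{k}$ estimate for $\rho_x\psi_N$, using $\rho_{xx}\psi_N=(\rho_x\psi_N)_x-\rho_x\psi_N'$ and $|\psi_N'|\le\gamma\psi_N$, to reach $\frac{d}{dt}\|\rho_x\psi_N\|_{L^k}\le CM^2(\|\rho\psi_N\|_{L^k}+\|\rho_x\psi_N\|_{L^k})$; it then lets $k\to\infty$ and closes with the bounds from Theorem~\ref{Theorem 1.5} via Gronwall. Your version is slightly cleaner: you apply Gronwall before letting $p\to\infty$, and you observe that the $\rho$-equation has no source and so closes on its own. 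One caveat is that your fallback appeal to Theorem~\ref{Theorem 1.1} for the mollification step is imprecise, since that result is stated for $(H^s)^3$ data rather than $H^{s-1}\times H^s\times H^s$.
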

  \begin{theo}\label{Theorem 1.7}
  	Assume the initial data $w_0 = (\rho_0, u_0, v_0)$ belong to $H^{s-1} \times H^{s} \times H^s$, $s > \frac{5}{2}$ and $T = T(v_0) > 0$. $w \in C([0,T); H^{s-1} \times H^{s} \times H^s)$ is the corresponding
  	solution to \eqref{(1.1)} with $w_0$. If the initial data satisfies
  	\[
  	\begin{cases}
  		\rho_0(x) \sim o\bigl((\ln(e+\beta+|x|))^{-\beta}\bigr), & |x| \to \infty, \\
  		\rho_{0,x}(x), u_0(x), u_{0,x}(x),u_{0,xx}(x),v_0(x), v_{0,x}(x),v_{0,xx}(x), \sim O\bigl((\ln(e+\beta+|x|))^{-\gamma}\bigr), & |x| \to \infty,
  	\end{cases}
  	\]
  	for $\beta \in (0,\infty)$ and $\gamma \in (\frac{\beta}{3}, \beta)$, then
  	\[
  	\rho(t,x) \sim o\bigl((\ln(e+\beta+|x|))^{-\beta}\bigr), \quad |x| \to \infty,
  	\]
  	uniformly in the interval $[0,T_0]$ for some $T_0 < T$.
  \end{theo}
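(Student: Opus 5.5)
Throughout, write $\phi(x)=(\ln(e+\beta+|x|))^\beta$. The plan is to work directly with the transport equation for $\rho$ in \eqref{(1.1)}, $\rho_t+uv\rho_x=-(uv)_x\rho$, along the flow $q(t,x)$ defined by $q_t=u(t,q)v(t,q)$, $q(0,x)=x$. By Theorem \ref{Theorem 1.6} (whose hypotheses hold here, since $\rho_{0,x},u_0,\dots,v_{0,xx}$ decay like $\phi^{-\gamma/\beta}$ at rate $\gamma$, possibly after reducing to that theorem with exponent $\gamma$), we have uniform weighted bounds on $[0,T_0]$. In particular, $u,u_x,v,v_x$ are bounded by $C\,(\ln(e+\beta+|x|))^{-\gamma}$. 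Set $M=\sup_{[0,T_0]}(\|u\|_{W^{1,\infty}}+\|v\|_{W^{1,\infty}})$, which is finite since $s>5/2$.

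\textbf{Step 1 (characteristics do not drift far in the log scale).} Since $|q_t|\le M^2$, we get $|q(t,x)-x|\le M^2T_0$. Hence $\ln(e+\beta+|q(t,x)|)$ and $\ln(e+\beta+|x|)$ are comparable, and their ratio tends to $1$ as $|x|\to\infty$. Since $q(t,\cdot)$ is a diffeomorphism of $\mathbb{R}$ with $|q(t,x)|\to\infty$ iff $|x|\to\infty$, uniformly in $t\le T_0$, it suffices to show $\phi(q(t,x))\rho(t,q(t,x))\to0$ as $|x|\to\infty$, uniformly in $t$.

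\textbf{Step 2 (explicit solution along characteristics).} We have
\[\rho(t,q(t,x))=\rho_0(x)\exp\Big(-\int_0^t (uv)_x(\tau,q(\tau,x))\,d\tau\Big).\]
The exponent is bounded by $2M^2T_0$. Therefore
\[\phi(q)|\rho(t,q)|\le e^{2M^2T_0}\,\frac{\phi(q(t,x))}{\phi(x)}\,\phi(x)|\rho_0(x)|,\]
and the right side tends to $0$ by the $o$-hypothesis on $\rho_0$ and Step 1. This is essentially the whole argument. The decay of $u,v$ enters only to ensure that $(uv)_x$ is in fact small at infinity, which gives the sharper statement that the exponential factor tends to $1$. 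It also justifies the restriction $\gamma>\beta/3$: if one instead runs the weighted $L^\infty$ energy argument of Theorems \ref{Theorem 1.5}--\ref{Theorem 1.6} (multiplying by $\phi_N=\min(\phi,N)$, taking $L^{2p}$ and $p\to\infty$), the cubic terms such as $\rho u_xv$ have weight $\phi$ absorbed by $\phi^{\gamma/\beta}$ powers. This works only if $3\gamma>\beta$ for the $p*$-terms in the equations for $u,v$, which appear when controlling $u,v$ in a weighted space.

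\textbf{Main obstacle.} The characteristic approach seems to need only boundedness of $(uv)_x$, so the main delicacy is rigor. First, $\rho\in H^{s-1}$ with $s>5/2$ gives $\rho\in C^1$, so the characteristic formula is classical. Second, one must keep uniformity in $t\in[0,T_0]$ when passing from $x$ to $q(t,x)$, using the inverse flow bound $|q^{-1}(t,y)-y|\le M^2T_0$. If the authors instead intend the weighted-energy route, then the hard step is the estimate
\[\|\phi\,p*(\text{cubic})\|_{L^\infty}\lesssim\|\phi^{1/3}f\|^3,\]
using $\phi(x)\le C\phi(y)e^{|x-y|}$-type bounds for the log weight. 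I would try the characteristic proof first and fall back on that estimate only if needed.
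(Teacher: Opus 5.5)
Your argument is correct, but it takes a different route from the paper.

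\textbf{The paper's route.} The paper stays Eulerian. It integrates the $\rho$-equation in time at a fixed $x$, obtaining $\rho(t,x)=\rho_0(x)-\int_0^t\bigl(\rho_xuv+\rho(u_xv+uv_x)\bigr)(s,x)\,ds$. It then invokes Theorem \ref{Theorem 1.6} with exponent $\gamma$ to get that $\rho,\rho_x,u,u_x,v,v_x$ are all $O((\ln(e+\beta+|x|))^{-\gamma})$ uniformly on $[0,T_0]$. Each integrand is a product of three such factors, hence $O((\ln(e+\beta+|x|))^{-3\gamma})$. This is $o((\ln(e+\beta+|x|))^{-\beta})$ precisely because $3\gamma>\beta$. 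So the restriction $\gamma>\beta/3$, and the decay assumptions on $\rho_{0,x},u_0,\dots,v_{0,xx}$, are used in this cubic bound for the time-integrated transport equation. They are not used for the $p*$-terms of the $u,v$ equations, as your side remark guesses.

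\textbf{Your route.} You work along the flow $q$ and exploit that the $\rho$-equation is linear and homogeneous in $\rho$. Along characteristics, $\rho$ is just $\rho_0$ times an exponential factor bounded by $e^{2M^2T_0}$. The bounded drift $|q(t,x)-x|\le M^2T_0$ changes the logarithmic weight only by a factor bounded uniformly and tending to $1$. The inverse-flow bound gives the uniformity in $t$.

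\textbf{What each approach buys.} Your approach gives a stronger, self-contained statement. You need only $\sup_{[0,T_0]}(\|u\|_{W^{1,\infty}}+\|v\|_{W^{1,\infty}})<\infty$, the $C^1$ regularity of $\rho$ (which $s>5/2$ provides), and the $o$-hypothesis on $\rho_0$. Neither Theorem \ref{Theorem 1.6}, nor the decay of $\rho_{0,x},u_0,\dots$, nor the condition $\gamma>\beta/3$ is needed, and the conclusion holds on every $[0,T_0]$ with $T_0<T$. The paper's Eulerian template is the standard one in persistence results for Camassa--Holm-type equations, where the unknown also carries nonlocal source terms and no explicit Lagrangian formula is available. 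For the pure transport equation satisfied by $\rho$, your approach is the more economical one.

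You should drop the opening appeal to Theorem \ref{Theorem 1.6}, since your proof never uses it.
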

  \begin{theo}\label{Theorem 1.8}
  	Suppose the initial data $w_0 = (\rho_0, u_0, v_0)$ belong to $H^{s-1} \times H^{s}\times H^s$, $s > \frac{5}{2}$. $T = T(w_0) > 0$ is the lifespan of the solution $w$ of \eqref{(1.1)} with $w_0$. If there
  	exists $\beta \in (0,\infty)$ such that
  	\[
  	\|(\rho_0, u_0, u_{0,x},u_{0,xx},v_0, v_{0,x},v_{0,xx})(1+\beta+|x|)^\beta\|_{L^\infty} \le C_0,
  	\]
  	then, it yields that
  	\[
  	\|(\rho, u, u_x,u_{xx},v,v_x,v_{xx})(1+\beta+|x|)^\beta\|_{L^\infty} \le C_1,
  	\]
  	uniformly in the interval $[0,T_0]$ for some $T_0 < T$. In particular, if the initial data satisfy
  	\[
  	\|(\rho_0,\rho_{0,x}, u_0, u_{0,x},u_{0,xx},v_0, v_{0,x},v_{0,xx})(1+\beta+|x|)^\beta\|_{L^\infty} \le C_0,
  	\]
  	then, it implies that
  	\[
  	\|(\rho,\rho_x, u, u_x,u_{xx},v,v_x,v_{xx})(1+\beta+|x|)^\beta\|_{L^\infty} \le C_1,
  	\]
  	uniformly in the interval $[0,T_0]$ for some $T_0 < T$, where the constant $C_1$ depends on $M, C_\beta, C_0$.
  \end{theo}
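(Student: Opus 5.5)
We prove Theorem \ref{Theorem 1.8} with the weighted $L^\infty$ scheme of \cite{du2024some,chen2019persistence}, using the polynomial weight $\varphi(x)=(1+\beta+|x|)^\beta$ in place of the logarithmic one. The property we need is that $\varphi$ is \emph{submultiplicative with respect to the Green function}:
\[
\varphi(x)\le \varphi(y)(1+|x-y|)^{\beta}\quad\text{and}\quad |\varphi'(x)|\le \beta\varphi(x)/(1+\beta+|x|)\le \varphi(x).
\]
Because $p(z)(1+|z|)^\beta\in L^1$, this gives
\[
\varphi\,|p*f|\le C_\beta\,\varphi|f|*\big(p(1+|\cdot|)^\beta\big),\qquad \|\varphi\,(p*f)\|_{L^\infty}\le C_\beta\|\varphi f\|_{L^\infty},
\]
and the same with $p_x$, since $|p_x|=p$. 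To work with bounded functions, we truncate the weight as $\varphi_N(x)=\min\{(1+\beta+|x|)^\beta,(1+\beta+N)^\beta\}$. This truncation keeps both properties with constants independent of $N$. The argument runs on $[0,T_0]$, where $T_0<T$ is fixed so that
\[
M:=\sup_{[0,T_0]}\big(\|u\|_{W^{2,\infty}}+\|v\|_{W^{2,\infty}}+\|\rho\|_{W^{1,\infty}}\big)<\infty .
\]
This $M$ is finite by Sobolev embedding, since $s>5/2$, and Theorem \ref{Theorem 1.1} gives continuity in time.

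\textbf{Step 1: the equations for the derivatives.} We differentiate \eqref{(1.3)} (with $\eta+1=\rho$) once and twice in $x$. The pointwise terms are at most $uv\,\partial_x(\cdot)$ times the unknowns, plus polynomials in $(u,\dots,v_{xx},\rho)$. In the $u_{xx}$ and $v_{xx}$ equations a third derivative appears via $u_{xxx}=u_x-m_x$. To avoid it, we use the identity $\partial_x^2p*f=p*f-f$. This converts $\partial_x^2 p*(\cdots)$ into nonlocal terms plus local terms containing $u_{xx}$ and $v_{xx}$ but no third derivatives. Indeed, the nonlocal integrand $2uv_xu_{xx}+uv_{xx}u_x+\cdots$ contains at most second derivatives, so its local part is harmless. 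The transport terms $uvu_{xxx}$ are handled by the $L^{2q}$ technique below.

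\textbf{Step 2: the weighted $L^{2q}$ estimates.} We multiply each equation by $\varphi_N$ and set $f=\varphi_N u$, and so on. For each $f$ we form $\frac{d}{dt}\int f^{2q}$. For the transport term $uv\,\varphi_N g_x$ we write
\[
\varphi_N g_x=(\varphi_N g)_x-\varphi_N' g,
\]
then integrate by parts. This yields a bound by $\|(uv)_x\|_{L^\infty}\|f\|_{2q}^{2q}$ plus $\|uv\|_{L^\infty}\,\|\varphi_N'/\varphi_N\|_{L^\infty}\|f\|^{2q}_{2q}$. The nonlocal terms are controlled by the convolution bound above in $L^{2q}$, via Young's inequality with $\|p(1+|\cdot|)^\beta\|_{L^1}$. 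Summing over all components gives
\[
\frac{d}{dt}\|F\|_{L^{2q}}\le C(M,C_\beta)\|F\|_{L^{2q}} .
\]
Here $F=\varphi_N(u,u_x,u_{xx},v,v_x,v_{xx},\rho)$, with $\rho$ handled through $\rho_t+uv\rho_x+(uv)_x\rho=0$. Gronwall and $q\to\infty$ then give $\|F(t)\|_{L^\infty}\le e^{CMT_0}\|F(0)\|_{L^\infty}\le C_1$. Finally, letting $N\to\infty$ by monotone convergence gives the first claim.

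\textbf{Step 3: adding $\rho_x$.} The second claim repeats the argument with $\rho_x$ added. Its equation is
\[
\rho_{xt}+uv\rho_{xx}+2(uv)_x\rho_x+(uv)_{xx}\rho=0 ,
\]
whose coefficients involve only $u,v$ up to second order and are bounded by $M$. So it closes in the same Gronwall system.

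\textbf{Main obstacle.} The main obstacle is Step 1 at the second-derivative level. We must check that after the substitution $\partial_x^2p*=p*-\mathrm{Id}$ no third derivatives remain outside the transport term. If an uncontrolled term like $uv_xu_{xxx}$ survives, the fallback is to write $u_{xxx}=u_x-m_x$ and estimate $\varphi_N m$ and $\varphi_N n$ directly from the $m$- and $n$-equations in \eqref{(1.2)}. Those equations are pure transport with zero-order reaction terms, so they close by the same $L^{2q}$ argument.
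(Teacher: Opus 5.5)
Your proposal is correct and follows the paper's own argument. The paper proves Theorem 1.8 by rerunning the proof of Theorem 1.5 with the truncated polynomial weight of Lemma 2.11: weighted $L^{k}$ estimates, integration by parts in the transport terms using $|\varphi_N'|\le \frac{\beta}{1+\beta}\varphi_N$, Gronwall, then $k\to\infty$ and $N\to\infty$. The one difference is how you bound the nonlocal terms, via $\varphi_N(x)\le \varphi_N(y)(1+|x-y|)^\beta$ and Young's inequality rather than the bound $\omega_N\le C_\beta$ of Lemma 2.11; this lets the differential inequality close entirely in $L^{2q}$ before $q\to\infty$. Also, your ``main obstacle'' does not arise: after $\partial_x^2p*f=p*f-f$, the only third derivative left is in the transport term, so the fallback is not needed.
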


%\vspace*{2em}
%\noindent\textbf{Acknowledgements}. This work was partially supported by ...

\section{Preliminaries}
  ~~~~In this section, we will recall some facts, which will be used in the sequel. Firstly, we present some facts on the Littlewood-Paley decomposition and nonhomogeneous Besov spaces.
  \begin{prop}\cite{BCD}\label{Proposition 2.1}
  	Let $\mathscr{C}$ be an annulus and $\mathscr{B}$ a ball. A constant C exists such that for any $k\in\mathbb{N}$, $1\leq p\leq q\leq\infty$, and any function $u$ of $L^p(\mathbb{R}^d)$, we have
  	$$
  	\mathrm{Supp~}\widehat{u}\subset\lambda\mathscr{B}\Longrightarrow\|D^ku\|_{L^q}=\sup_{|\alpha|=k}\|\partial^\alpha u\|_{L^q}\leq C^{k+1}\lambda^{k+d(\frac{1}{p}-\frac{1}{q})}\|u\|_{L^p},
  	$$
  	$$
  	\mathrm{Supp~}\widehat{u}\subset\lambda\mathscr{C}\Longrightarrow C^{-k-1}\lambda^k\|u\|_{L^p}\leq\|D^ku\|_{L^p}\leq C^{k+1}\lambda^k\|u\|_{L^p}.
  	$$
  \end{prop}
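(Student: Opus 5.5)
The statement is Proposition \ref{Proposition 2.1}, the classical Bernstein inequalities; I would prove it by rescaling to $\lambda=1$ and representing $u$ as a convolution with a fixed Schwartz function. Write $u_\lambda(x)=u(x/\lambda)$, so that $\widehat{u_\lambda}(\xi)=\lambda^d\widehat u(\lambda\xi)$. Hence $\operatorname{Supp}\widehat u\subset\lambda\mathscr{B}$ if and only if $\operatorname{Supp}\widehat{u_\lambda}\subset\mathscr{B}$, and
\[
\|\partial^\alpha u\|_{L^q}=\lambda^{|\alpha|+d(\frac1p-\frac1q)}\,\frac{\|\partial^\alpha u_\lambda\|_{L^q}}{\|u_\lambda\|_{L^p}}\,\|u\|_{L^p}.
\]
The same scaling applies to the annulus case. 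So it suffices to treat $\lambda=1$.

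\emph{First inequality.} Fix $\phi\in\mathcal{D}(\mathbb{R}^d)$ with $\phi\equiv1$ on a neighbourhood of $\mathscr{B}$. Then $\widehat u=\phi\widehat u$, so $\partial^\alpha u=\partial^\alpha g*u$ with $g=\mathcal{F}^{-1}\phi$. Young's inequality with $1+\frac1q=\frac1r+\frac1p$ gives $\|\partial^\alpha u\|_{L^q}\le\|\partial^\alpha g\|_{L^r}\|u\|_{L^p}$. The key point is the bound $\|\partial^\alpha g\|_{L^r}\le C^{|\alpha|+1}$, which must hold with a geometric constant, uniformly in $r\in[1,\infty]$.

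To get it, use
\[
\|\partial^\alpha g\|_{L^r}\le\|\partial^\alpha g\|_{L^\infty}+\|\partial^\alpha g\|_{L^1}\le C\,\|(1+|x|^2)^d\partial^\alpha g\|_{L^\infty}.
\]
The last quantity is bounded by the $L^1$ norm of $(1-\Delta_\xi)^d(\xi^\alpha\phi(\xi))$. Since $\phi$ has compact support in a ball of radius $R$, the Leibniz rule yields a bound $C(d,\phi)\,(1+R)^{|\alpha|}|\alpha|^{2d}$, which is $\le C^{|\alpha|+1}$. Taking the supremum over $|\alpha|=k$ gives the first claim.

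\emph{Second inequality.} The upper bound follows from the first inequality with $q=p$, since an annulus lies in a ball. For the lower bound, take $\tilde\phi\in\mathcal{D}(\mathbb{R}^d\setminus\{0\})$ equal to $1$ near $\mathscr{C}$. Use the identity $|\xi|^{2k}=\sum_{|\alpha|=k}\frac{k!}{\alpha!}\xi^{2\alpha}$, which lets us write
\[
\widehat u=\sum_{|\alpha|=k}\frac{k!}{\alpha!}\,\frac{(-i\xi)^\alpha\tilde\phi(\xi)}{|\xi|^{2k}}\,\widehat{\partial^\alpha u}\,.
\]
So $u=\sum_{|\alpha|=k}\frac{k!}{\alpha!}\,g_\alpha*\partial^\alpha u$ with $\widehat{g_\alpha}=(-i\xi)^\alpha|\xi|^{-2k}\tilde\phi$. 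Young's inequality then gives $\|u\|_{L^p}\le\sum_{|\alpha|=k}\frac{k!}{\alpha!}\|g_\alpha\|_{L^1}\sup_{|\alpha|=k}\|\partial^\alpha u\|_{L^p}$. Since $\sum_{|\alpha|=k}\frac{k!}{\alpha!}=d^k$, it remains to show $\|g_\alpha\|_{L^1}\le C^{k+1}$.

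\emph{Main obstacle.} The delicate step is this uniform-in-$k$ bound on $\|g_\alpha\|_{L^1}$. I would control it as before through $\|(1-\Delta_\xi)^d\widehat{g_\alpha}\|_{L^1}$. On the support of $\tilde\phi$ we have $r\le|\xi|\le R$ with $r>0$, and derivatives of order at most $2d$ of $\xi^\alpha|\xi|^{-2k}$ are bounded by $C^{k}k^{2d}(R/r^2)^k$, which is at most $C^{k+1}$ after enlarging $C$. Combining this with the scaling reduction completes the proof.
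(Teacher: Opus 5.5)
Your proposal is correct. It follows essentially the same route as the standard proof in the cited reference (Bahouri--Chemin--Danchin, Lemma 2.1), which the paper quotes without giving a proof of its own. That route consists of a dilation to $\lambda=1$, then $\partial^\alpha u=\partial^\alpha g*u$ with Young's inequality and the bound $\|\partial^\alpha g\|_{L^r}\le C\|(1-\Delta_\xi)^d(\xi^\alpha\phi)\|_{L^1}\le C^{k+1}$, and, for the lower bound, the multinomial identity $|\xi|^{2k}=\sum_{|\alpha|=k}\frac{k!}{\alpha!}\xi^{2\alpha}$ combined with $\sum_{|\alpha|=k}\frac{k!}{\alpha!}=d^k$.
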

  
  \begin{prop}\cite{BCD}\label{Proposition 2.2}
  	Let $\mathcal{B}=\{\xi\in\mathbb{R}^d: |\xi|\leq \frac 43\}$ and $\mathcal{C}=\{\xi\in\mathbb{R}^d: \frac 34 \leq|\xi|\leq \frac 83\}$. There exist two smooth, radial functions $\chi$ and $\varphi$, valued in the interval $[0,1]$, belonging respectively to $\mathcal{D(B)}$ and $\mathcal{D(C)}$, such that
  	$$
  	\forall\ \xi\in\mathbb{R}^{d},\ \chi(\xi)+\sum_{j\geq0}\varphi(2^{-j}\xi)=1, 
  	$$
  	$$
  	j\geq1\Rightarrow\mathrm{Supp~}\chi\cap\mathrm{Supp~}\varphi(2^{-j}\cdot)=\emptyset,
  	$$
  	Let $u\in\mathcal{S}'$. Defining
  	$$
  	\Delta_{j}u\triangleq0\ \text{if}\ j\leq-2,\ \Delta_{-1}u\triangleq\chi(D)u=\mathcal{F}^{-1}(\chi\mathcal{F}u),
  	$$
  	$$
  	\Delta_{j}u\triangleq\varphi(2^{-j}D)u=\mathcal{F}^{-1}(\varphi(2^{-j}\cdot)\mathcal{F}u)\ \text{if}\ j\geq0,
  	$$
  	$$
  	S_j u\triangleq\sum_{j^{\prime}\leq j-1}\Delta_{j^{\prime}}u,
  	$$
  	we have the following Littlewood-Paley decomposition
  	$$u=\sum_{j\in\mathbb{Z}}\Delta_{j}u\quad in\ \mathcal{S}'.$$
  \end{prop}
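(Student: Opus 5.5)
The plan is to build both functions from a single radial cutoff by a telescoping construction, and then to get convergence in $\mathcal{S}'$ by duality with $\mathcal{S}$.

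\textbf{Construction.} Fix a smooth function $\theta:\mathbb{R}^d\to[0,1]$ that is radial and nonincreasing in $|\xi|$, with $\theta(\xi)=1$ for $|\xi|\le \frac34$ and $\theta(\xi)=0$ for $|\xi|\ge\frac43$. Such a $\theta$ exists by taking a smooth monotone one-dimensional profile on $[\frac34,\frac43]$ and composing it with $|\xi|$; this is smooth because the profile is constant near the origin. Set
\begin{align*}
\chi(\xi)=\theta(\xi),\qquad \varphi(\xi)=\theta(\xi/2)-\theta(\xi).
\end{align*}
Then $\chi\in\mathcal{D}(\mathcal{B})$. Monotonicity gives $\theta(\xi/2)\ge\theta(\xi)$, so $\varphi$ takes values in $[0,1]$. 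If $|\xi|\ge\frac83$, both terms of $\varphi$ vanish; if $|\xi|\le\frac34$, both equal $1$. Hence $\varphi\in\mathcal{D}(\mathcal{C})$, and it is smooth and radial.

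\textbf{Partition of unity and disjointness.} Telescoping gives
\begin{align*}
\chi(\xi)+\sum_{j=0}^{J}\varphi(2^{-j}\xi)=\theta(2^{-J-1}\xi).
\end{align*}
For fixed $\xi$ the right-hand side equals $1$ as soon as $2^{-J-1}|\xi|\le\frac34$, so the identity $\chi+\sum_{j\ge0}\varphi(2^{-j}\cdot)=1$ holds, with only finitely many nonzero terms at each point. For disjointness, when $j\ge1$ we have $\mathrm{Supp}\,\varphi(2^{-j}\cdot)\subset\{|\xi|\ge \frac34 2^j\}\subset\{|\xi|\ge\frac32\}$. Since $\frac32>\frac43$, this set does not meet $\mathrm{Supp}\,\chi$.

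\textbf{Convergence in $\mathcal{S}'$.} The same telescoping identity gives $S_ju=\theta(2^{-j}D)u$. The step requiring the most care, though it is standard, is showing $S_ju\to u$ in $\mathcal{S}'$. Because the Fourier transform is an isomorphism of $\mathcal{S}'$, it suffices to show $\theta(2^{-j}\cdot)\hat u\to\hat u$ in $\mathcal{S}'$. By duality this reduces to proving $\theta(2^{-j}\cdot)\psi\to\psi$ in $\mathcal{S}$ for every $\psi\in\mathcal{S}$.

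To check this, write $(1-\theta(2^{-j}\xi))\psi(\xi)$. This function is supported in $|\xi|\ge\frac34 2^j$. By Leibniz, its derivatives involve $\partial^\beta\theta(2^{-j}\cdot)$, which is bounded by $C_\beta 2^{-j|\beta|}\le C_\beta$. Each seminorm $\sup|\xi|^N|\partial^\alpha[(1-\theta(2^{-j}\cdot))\psi]|$ is therefore bounded by $C\sup_{|\xi|\ge 2^{j-1}}(1+|\xi|)^{N}\sum_{\gamma\le\alpha}|\partial^\gamma\psi|$. This tends to $0$ as $j\to\infty$ by the rapid decay of $\psi$, which yields $u=\sum_j\Delta_ju$ in $\mathcal{S}'$.
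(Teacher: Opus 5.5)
Your proof is correct. The paper gives no argument and simply cites Bahouri--Chemin--Danchin, whose construction works by normalization rather than by telescoping. They take a radial bump $\theta$ supported in $\mathcal{C}$ and equal to $1$ near a thinner annulus whose dyadic dilates cover $\mathbb{R}^d\setminus\{0\}$, and set $\varphi=\theta/\sum_{j\in\mathbb{Z}}\theta(2^{-j}\cdot)$. They then define $\chi=1-\sum_{j\ge0}\varphi(2^{-j}\cdot)$, and still have to check separately that $\chi$ is smooth, supported in $\mathcal{B}$ and valued in $[0,1]$.

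Your choice $\varphi=\theta(\cdot/2)-\theta$, with $\theta$ radial and monotone, makes these properties nearly automatic. The partial sums are exactly $\theta(2^{-J-1}\xi)$, which gives at once the pointwise identity and the formula $S_ju=\theta(2^{-j}D)u$. Telescoping in both directions also gives the homogeneous identity $\sum_{j\in\mathbb{Z}}\varphi(2^{-j}\xi)=1$ for $\xi\neq0$. The price is that you need monotonicity of $\theta$ to get $0\le\varphi\le1$. The normalization trick needs no monotonicity and adapts to general coverings.

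The convergence $u=\sum_j\Delta_ju$ in $\mathcal{S}'$ is proved the same way in both. By duality it reduces to showing that $(1-\theta(2^{-j}\cdot))\psi\to0$ in $\mathcal{S}$, exactly as you did. Bounding over $\{|\xi|\ge 2^{j-1}\}$ instead of the true support $\{|\xi|\ge\frac{3}{4}2^{j}\}$ is harmless.
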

  
  \begin{defi}\cite{BCD}\label{Defintion 2.3}
  	Let $s\in\mathbb{R}$ and $(p,r)\in[1,\infty]^2$. The nonhomogeneous Besov
  	space $B^s_{p,r}$ consists of all $u\in\mathcal{S}'$ such that
  	$$
  	\|u\|_{B_{p,r}^s}\triangleq\left\|(2^{js}\|\Delta_ju\|_{L^p})_{j\in\mathbb{Z}}\right\|_{\ell^r(\mathbb{Z})}<\infty.
  	$$
  \end{defi}
  
  \begin{defi}\cite{BCD}\label{Defintion 2.4}
  	Considering $u,v\in\mathcal{S}'$, we have the following Bony decomposition
  	$$uv=T_u v+T_v u+R(u,v),$$
  	where
  	$$
  	T_u v=\sum_j S_{j-1}u\Delta_j v,\ R(u,v)=\sum_{|k-j|\leq1}\Delta_ku\Delta_jv.
  	$$
  \end{defi}
  
  \begin{lemm}\label{Lemma 2.5}\cite{BCD}
  	{\rm (1)} $\forall t<0,s\in\mathbb{R},u\in B^t_{p,r_1}\cap L^\infty,v\in B^s_{p,r_2}$ with $\frac 1 r=\frac 1 {r_1}+\frac 1 {r_2}$, then
  	$$\|T_u v\|_{B^s_{p,r_2}}\leq C\|u\|_{L^\infty}\|v\|_{B^s_{p,r_2}}$$
  	or
  	$$\|T_u v\|_{B^{s+t}_{p,r}}\leq C\|u\|_{B^t_{\infty,r_1}}\|v\|_{B^s_{p,r_2}}$$
  	{\rm (2)} $\forall s_1,s_2\in\mathbb{R},1\leq p_1,p_2,r_1,r_2\leq\infty$, with $\frac 1 p=\frac 1 {p_1}+\frac 1 {p_2}\leq1,\frac 1 r=\frac 1 {r_1}+\frac 1 {r_2}\leq1$. Then $\forall (u,v)\in B^{s_1}_{p_1,r_1}\times B^{s_2}_{p_2,r_2}$, if $s_1+s_2>0$
  	$$\|R(u,v)\|_{B^{s_1+s_2}_{p,r}}\leq C\|u\|_{B^{s_1}_{p_1,r_1}}\|v\|_{B^{s_2}_{p_2,r_2}}$$
  	If $r=1$ and $s_1+s_2=0$,
  	$$\|R(u,v)\|_{B^0_{p,\infty}}\leq C\|u\|_{B^{s_1}_{p_1,r_1}}\|v\|_{B^{s_2}_{p_2,r_2}}$$
  \end{lemm}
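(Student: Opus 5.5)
The plan is to follow the standard Littlewood--Paley argument of \cite{BCD}. Both estimates rest on two facts about the spectral localization of the dyadic blocks in Definition \ref{Defintion 2.4}.

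\textbf{Fact A.} If a sequence $(w_j)$ satisfies $\mathrm{Supp}\,\widehat{w_j}\subset 2^j\widetilde{\mathscr C}$ for a fixed annulus $\widetilde{\mathscr C}$, then
$$\Big\|\sum_j w_j\Big\|_{B^s_{p,r}}\le C\big\|(2^{js}\|w_j\|_{L^p})_j\big\|_{\ell^r}.$$
This holds because $\Delta_{j'}w_j=0$ unless $|j-j'|\le N_0$.

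\textbf{Fact B.} If instead $\mathrm{Supp}\,\widehat{w_j}\subset 2^j\widetilde{\mathscr B}$ for a ball $\widetilde{\mathscr B}$ and $s>0$, the same bound holds. The reason is that $\Delta_{j'}w_j=0$ unless $j\ge j'-N_0$, and one then uses the convolution of $2^{js}\|w_j\|_{L^p}$ with the $\ell^1$ sequence $2^{-ks}\mathbf 1_{k\ge -N_0}$ together with Young's inequality. When $s=0$ one only has the trivial bound $\sup_{j'}\sum_{j\ge j'-N_0}\|w_j\|_{L^p}$, which gives a $B^0_{p,\infty}$ estimate by the $\ell^1$ norm.

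\textbf{Paraproduct, part (1).} Each term $S_{j-1}u\,\Delta_jv$ has Fourier support in $2^j\widetilde{\mathscr C}$, because $\mathrm{Supp}\,\widehat{S_{j-1}u}$ lies in a ball of radius $\tfrac{2}{3}2^{j}$ while $\widehat{\Delta_j v}$ lives on $2^j\mathcal C$. Fact A therefore applies. For the first inequality, use $\|S_{j-1}u\|_{L^\infty}\le C\|u\|_{L^\infty}$, which holds since $S_{j-1}$ is convolution with an $L^1$-bounded kernel. Then
$$2^{js}\|S_{j-1}u\Delta_jv\|_{L^p}\le C\|u\|_{L^\infty}\,2^{js}\|\Delta_jv\|_{L^p},$$
and taking the $\ell^{r_2}$ norm gives the claim.

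For the second inequality, with $t<0$, write
$$2^{jt}\|S_{j-1}u\|_{L^\infty}\le\sum_{j'\le j-2}2^{(j-j')t}\,2^{j't}\|\Delta_{j'}u\|_{L^\infty}.$$
Since $t<0$, the weights $2^{(j-j')t}\mathbf 1_{j-j'\ge2}$ form an $\ell^1$ sequence. Young's inequality then shows that $(2^{jt}\|S_{j-1}u\|_{L^\infty})_j$ lies in $\ell^{r_1}$ with norm at most $C\|u\|_{B^t_{\infty,r_1}}$. Multiplying by $2^{js}\|\Delta_jv\|_{L^p}\in\ell^{r_2}$ and applying Hölder's inequality in $\ell^r$ with $\frac1r=\frac1{r_1}+\frac1{r_2}$, followed by Fact A with index $s+t$, gives the bound.

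\textbf{Remainder, part (2).} Write $R(u,v)=\sum_k\Delta_ku\,\widetilde\Delta_kv$, where $\widetilde\Delta_k=\Delta_{k-1}+\Delta_k+\Delta_{k+1}$. Each term has Fourier support in a ball $2^k\widetilde{\mathscr B}$. Hölder's inequality with $\frac1p=\frac1{p_1}+\frac1{p_2}$ gives
$$2^{k(s_1+s_2)}\|\Delta_ku\,\widetilde\Delta_kv\|_{L^p}\le \big(2^{ks_1}\|\Delta_ku\|_{L^{p_1}}\big)\big(2^{ks_2}\|\widetilde\Delta_kv\|_{L^{p_2}}\big).$$
The right-hand side is an $\ell^r$ sequence with norm at most $C\|u\|_{B^{s_1}_{p_1,r_1}}\|v\|_{B^{s_2}_{p_2,r_2}}$, by Hölder in $\ell$ (here $r\ge1$ is needed). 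If $s_1+s_2>0$, Fact B yields the $B^{s_1+s_2}_{p,r}$ bound. If $s_1+s_2=0$ and $r=1$, the degenerate case of Fact B bounds $\sup_j\|\Delta_jR\|_{L^p}$ by the $\ell^1$ norm, which is the $B^0_{p,\infty}$ estimate.

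The main obstacle is Fact B, specifically the bookkeeping $\Delta_j(\cdot)=\sum_{k\ge j-N_0}$ and the observation that positivity of $s_1+s_2$ is exactly what makes the convolution kernel summable. Everything else reduces to Hölder's and Young's inequalities for sequences.
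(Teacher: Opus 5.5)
Your proposal is correct and follows the standard Littlewood--Paley argument of \cite{BCD}: the paraproduct and remainder estimates are reduced to summation lemmas for annulus- and ball-localized series, via H\"older and Young for sequences. The paper does not reproduce this argument and only cites it. The one point you leave implicit is the convergence in $\mathcal{S}'$ of the series defining $T_uv$ and $R(u,v)$; this is part of the same summation lemmas in \cite{BCD}, and in your ball case with $s>0$ it follows directly from $\|w_j\|_{L^p}\lesssim 2^{-js}$.
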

	We also need the following useful results, which will be the key to prove our main ideas.
\begin{lemm}\cite{taylor1991pseudodifferential}\label{Lemma 2.6}
	For $s > \frac{1}{2}$, there is a constant $c_s > 0$ such that
	\begin{align*}
		\| fg\|_{H^s} \le c_s\| f\|_{H^s}\| g\|_{H^s}.
	\end{align*}
\end{lemm}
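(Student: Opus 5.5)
The plan is to work entirely on the Fourier side and prove the estimate directly from Peetre's inequality and Young's convolution inequality, with no Littlewood--Paley machinery needed. Throughout, write $\langle\xi\rangle=(1+|\xi|^2)^{1/2}$, so that $\|f\|_{H^s}=\|\langle\xi\rangle^s\widehat f\|_{L^2}$ (up to the normalization of the Fourier transform). Up to a harmless constant, $\widehat{fg}=\widehat f*\widehat g$. Hence
\[
\|fg\|_{H^s}\le C\Big\|\int_{\mathbb{R}}\langle\xi\rangle^s|\widehat f(\xi-\eta)||\widehat g(\eta)|\,d\eta\Big\|_{L^2_\xi}.
\]

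The first step is the elementary weight inequality. Since $\langle\xi\rangle\le\langle\xi-\eta\rangle+\langle\eta\rangle\le 2\max(\langle\xi-\eta\rangle,\langle\eta\rangle)$, for $s\ge0$ we get
\[
\langle\xi\rangle^s\le 2^s\big(\langle\xi-\eta\rangle^s+\langle\eta\rangle^s\big).
\]
Inserting this splits the integrand into two pieces. This gives the pointwise bound
\[
\langle\xi\rangle^s|\widehat{fg}(\xi)|\le C2^s\Big[\big(\langle\cdot\rangle^s|\widehat f|\big)*|\widehat g|+|\widehat f|*\big(\langle\cdot\rangle^s|\widehat g|\big)\Big](\xi).
\]

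The second step applies Young's inequality $\|a*b\|_{L^2}\le\|a\|_{L^2}\|b\|_{L^1}$ to each term. This yields
\[
\|fg\|_{H^s}\le C_s\big(\|f\|_{H^s}\|\widehat g\|_{L^1}+\|\widehat f\|_{L^1}\|g\|_{H^s}\big).
\]

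The remaining, and only substantive, step is the bound $\|\widehat g\|_{L^1}\le C\|g\|_{H^s}$; this is exactly where $s>\frac12$ enters. By Cauchy--Schwarz,
\[
\|\widehat g\|_{L^1}\le\|\langle\xi\rangle^{-s}\|_{L^2}\,\|\langle\xi\rangle^{s}\widehat g\|_{L^2},
\]
and $\int_{\mathbb{R}}(1+\xi^2)^{-s}\,d\xi<\infty$ precisely when $2s>1$. Combining the three steps gives $\|fg\|_{H^s}\le c_s\|f\|_{H^s}\|g\|_{H^s}$ with $c_s$ depending only on $s$.

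On the circle the same argument applies verbatim, with Fourier series replacing the Fourier transform and sums over $\mathbb{Z}$ replacing integrals. Discrete Young's inequality on $\ell^2*\ell^1$ holds, and $\sum_{k\in\mathbb{Z}}(1+k^2)^{-s}<\infty$ again exactly when $s>\frac12$. I do not expect a genuine obstacle: the only delicate point is the integrability of $\langle\xi\rangle^{-2s}$, which is where the threshold $s>\frac12$ enters.
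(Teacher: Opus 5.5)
Your argument is correct and self-contained. The weight bound $\langle\xi\rangle\le\langle\xi-\eta\rangle+\langle\eta\rangle$ holds, since $\langle\xi\rangle$ is the Euclidean length of $(1,\xi)$. Young's inequality $L^2*L^1\subset L^2$ handles the two resulting terms. The threshold $s>\frac12$ enters only through $\|\widehat g\|_{L^1}\le\|\langle\cdot\rangle^{-s}\|_{L^2}\|g\|_{H^s}$, and the periodic case is identical with sums over $\mathbb{Z}$.

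The paper itself gives no proof of Lemma \ref{Lemma 2.6}; it simply quotes it from Taylor. There the algebra property comes from the Moser (tame) estimate $\|fg\|_{H^s}\le C_s(\|f\|_{L^\infty}\|g\|_{H^s}+\|f\|_{H^s}\|g\|_{L^\infty})$ for $s\ge0$, proved by a Littlewood--Paley/paraproduct decomposition of the type behind Lemma \ref{Lemma 2.5}. That estimate is then combined with the embedding $H^s\hookrightarrow L^\infty$ for $s>\frac12$.

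Your proof is the elementary Fourier-side analogue of this. Before you invoke $s>\frac12$, you have already proved the tame bound $\|fg\|_{H^s}\le C_s(\|f\|_{H^s}\|\widehat g\|_{L^1}+\|\widehat f\|_{L^1}\|g\|_{H^s})$ for all $s\ge0$, with the Wiener norm $\|\widehat{\cdot}\,\|_{L^1}$ in place of the $L^\infty$ norm. Your route buys brevity and needs nothing beyond Plancherel and Young. The paraproduct route buys two things:
\begin{enumerate}
\item The sharper version with the weaker $L^\infty$ norm as the low-order factor. This is what continuation criteria phrased in $W^{1,\infty}$ norms require, as in Theorem \ref{Theorem 1.2} and Remark \ref{Remark 1.2}.
\item Validity in $L^p$-based Sobolev and Besov spaces, where your Plancherel/Young argument is not available.
\end{enumerate}
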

\begin{lemm}\cite{kato1988commutator}\label{Lemma 2.7}
	If $s > 0$, then there is $c_s > 0$ such that
	\begin{align*}
		\| [D^s,f]g\|_{L^2} \le c_s(\| D^sf\|_{L^2}\| g\|_{L^{\infty}} + \| f_x\|_{L^{\infty}}\| D^{s-1}\|_{L^2} ).
	\end{align*}
\end{lemm}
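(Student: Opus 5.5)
This is the classical Kato–Ponce commutator estimate. I would prove it by paraproduct decomposition, using the Littlewood–Paley machinery of Proposition \ref{Proposition 2.2} and Definition \ref{Defintion 2.4}. The $H^s$ norm is equivalent to the $B^s_{2,2}$ norm, and $D^s=(1-\partial_x^2)^{s/2}$ acts on each dyadic block $\Delta_j$ like multiplication by $2^{js}$ up to a bounded symbol. Here the last factor in the statement is read as $\|D^{s-1}g\|_{L^2}$.

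First write $fg=T_fg+T_gf+R(f,g)$, so that
\begin{align*}
[D^s,f]g=[D^s,T_f]g+D^s\bigl(T_gf+R(f,g)\bigr)-T_{D^sg}f-T_fD^sg\cdot 0-\bigl(T_{D^s g}f+R(f,D^sg)\bigr)+T_{D^sg}f,
\end{align*}
that is,
\[
[D^s,f]g=[D^s,T_f]g+D^s T_gf+D^sR(f,g)-T_{D^sg}f-R(f,D^sg).
\]

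The terms in which $f$ sits at high frequency are easy.
\begin{itemize}
\item By Lemma \ref{Lemma 2.5}(1), $\|D^sT_gf\|_{L^2}\lesssim\|T_gf\|_{H^s}\lesssim\|g\|_{L^\infty}\|D^sf\|_{L^2}$.
\item For $D^sR(f,g)$ with $s>0$, the remainder estimate of Lemma \ref{Lemma 2.5}(2), with $(p_1,p_2)=(2,\infty)$ and indices $(s,0)$, gives $\lesssim\|f\|_{H^s}\|g\|_{B^0_{\infty,\infty}}\lesssim\|D^sf\|_{L^2}\|g\|_{L^\infty}$.
\end{itemize}

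The terms carrying $D^sg$ require placing one derivative on $f$.
\begin{itemize}
\item For $T_{D^sg}f$: in each block $S_{j-1}D^sg\,\Delta_jf$ I will write $\Delta_jf=2^{-j}\tilde\Delta_j f_x$ using the annulus Bernstein inequality (Proposition \ref{Proposition 2.1}). Then $\|S_{j-1}D^sg\|_{L^2}\lesssim 2^{j}\|S_{j-1}D^{s-1}g\|_{L^2}$ holds when $s-1\ge$ something, and otherwise one sums a convolution of sequences. This gives $\lesssim\|f_x\|_{L^\infty}\|D^{s-1}g\|_{L^2}$.
\item For $R(f,D^sg)=R(f_x,D^{s-1}g)$ modulo a zero-order multiplier on the $f$-block (again by Bernstein on annuli, treating the low block $\Delta_{-1}$ separately via $\|\Delta_{-1}f\|_{L^\infty}$ in the $D^sf$ term), we get $\lesssim\|f_x\|_{L^\infty}\|D^{s-1}g\|_{L^2}$.
\end{itemize}
Some care is needed since $R$ has index sum $s-1+0$, which may be $\le0$. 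I would handle this blockwise, summing $\sum_{k\le j+2}$ terms in $L^2$ with a $2^{(k-j)s}$ gain coming from the outer $D^s$ being absent here.

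The main obstacle is the genuine commutator $[D^s,T_f]g=\sum_j[D^s,S_{j-1}f]\Delta_jg$. Each summand has Fourier support in an annulus $\sim2^j$. Writing $D^s$ there as $2^{js}h(2^{-j}D)$ with $h$ smooth and compactly supported in an annulus, the standard first-order commutator lemma $\|[h(2^{-j}D),a]b\|_{L^2}\lesssim2^{-j}\|a_x\|_{L^\infty}\|b\|_{L^2}$ (from the mean value theorem on the kernel) gives
\[
\|[D^s,S_{j-1}f]\Delta_jg\|_{L^2}\lesssim2^{j(s-1)}\|f_x\|_{L^\infty}\|\Delta_jg\|_{L^2}.
\]
By almost orthogonality the square sum over $j$ is then $\lesssim\|f_x\|_{L^\infty}\|D^{s-1}g\|_{L^2}$. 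Collecting the five terms yields the estimate.
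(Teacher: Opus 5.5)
Your decomposition $[D^s,f]g=[D^s,T_f]g+D^sT_gf+D^sR(f,g)-T_{D^sg}f-R(f,D^sg)$ is correct, and four of the five terms work as you describe. Two small comments on those: for $T_{D^sg}f$, the discrete convolution $2^{-j}\sum_{k\le j-2}2^k\|\Delta_kD^{s-1}g\|_{L^2}$ is needed for every $s$ to get square-summability in $j$, not only "otherwise"; and $[D^s,T_f]g$ is the routine term, not the obstacle.

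The step that fails is the resonant term $R(f,D^sg)=\sum_{|k-j|\le1}\Delta_kf\,\Delta_jD^sg$.
\begin{itemize}
\item Bernstein gives $\|\Delta_kf\,\Delta_jD^sg\|_{L^2}\lesssim\|f_x\|_{L^\infty}\|\Delta_jD^{s-1}g\|_{L^2}$. But these products have Fourier support only in balls $\{|\xi|\lesssim 2^k\}$, not annuli.
\item So all you get is $\|\Delta_mR(f,D^sg)\|_{L^2}\lesssim\|f_x\|_{L^\infty}\sum_{j\ge m-N}\|\Delta_jD^{s-1}g\|_{L^2}$, with no decay in $j-m$. Tail sums of an $\ell^2$ sequence need not be square-summable, or even finite.
\item The gain $2^{(m-j)s}$ you invoke is real for $D^sR(f,g)$: there the outer $D^s$ sees the output frequency $2^m$ rather than the input frequency $2^j$, which is exactly why $s>0$ disposes of that term. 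In $R(f,D^sg)$ there is no outer $D^s$, hence no gain.
\item The index sum here is not $s-1$ but exactly $0$ for every $s$. Either $f$ enters at regularity $1$ in $L^\infty$ against $D^sg$ at $-1$ in $L^2$, or $f$ at $s$ in $L^2$ against $D^sg$ at $-s$ in $L^\infty$. By scaling, any interpolation between the two products on the right-hand side keeps index sum $0$ with only $\ell^2$ summability of the blocks.
\item Lemma~\ref{Lemma 2.5}(2) therefore cannot close it: at index sum $0$ it requires $r=1$, and even then only gives $B^0_{2,\infty}$.
\end{itemize}

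What is missing is an endpoint bilinear bound $\bigl\|\sum_k A_kB_k\bigr\|_{L^2}\lesssim\|a\|_{L^\infty}\|b\|_{L^2}$. Here $A_k,B_k$ are harmless zero-order modifications of the Littlewood--Paley blocks at frequency $2^k$ of $a=f_x$ and $b=D^{s-1}g$. This bound is true, but it does not follow from Bernstein plus discrete Young. By duality, with $\langle\sum_kA_kB_k,h\rangle=\sum_k\langle B_k,A_kS_{k+N}h\rangle$, you must control $\sum_k\int|A_k|^2|S_{k+N}h|^2\,dx$ by $\|a\|_{\mathrm{BMO}}^2\|h\|_{L^2}^2$. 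That is a Carleson-measure estimate, equivalently the $L^2$-boundedness of paraproducts with a BMO symbol.

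The paper gives no proof of this lemma; it only cites Kato--Ponce. Their argument gets exactly this endpoint control from the Coifman--Meyer bilinear multiplier theorem. Adding that ingredient, via the Carleson embedding or by quoting Coifman--Meyer, for $R(f,D^sg)$ would complete your paraproduct proof.

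A smaller point concerns the lowest block of $R(f,D^sg)$. Pair $\|\Delta_{-1}f\|_{L^2}\le\|D^sf\|_{L^2}$ with $\|\Delta_jD^sg\|_{L^\infty}\lesssim\|g\|_{L^\infty}$ for $j\le 0$. Putting $\Delta_{-1}f$ in $L^\infty$, as you propose, produces $\|D^sf\|_{L^2}\|D^{s-1}g\|_{L^2}$, which is not on the right-hand side.
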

\begin{lemm}\cite{himonas2016novikov,Himonas2010nonuniform}\label{Lemma 2.8}
	If $\sigma > \frac{1}{2}$, then there is $c_{\sigma} > 0$ such that
	\begin{align*}
		\| fg \|_{H^{\sigma-1}} \le c_{\sigma}\| f\|_{H^{\sigma}}\| g\|_{H^{\sigma-1}}. 
	\end{align*}
\end{lemm}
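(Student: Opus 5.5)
We prove Lemma \ref{Lemma 2.8} by duality, using the algebra property of Lemma \ref{Lemma 2.6} for the range $\sigma\ge 1$, and a direct Littlewood--Paley argument with the paraproduct estimates of Lemma \ref{Lemma 2.5} for $\frac12<\sigma<1$. Throughout we identify $H^{\sigma}=B^{\sigma}_{2,2}$.

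\emph{Case $\sigma\ge1$.} Set $s=\sigma-1\ge 0$ and write
$$\|fg\|_{H^{\sigma-1}}=\sup_{\|\phi\|_{H^{1-\sigma}}\le1}\Big|\int fg\phi\,dx\Big|.$$
For $\sigma=1$ this is immediate: $\|fg\|_{L^2}\le\|f\|_{L^\infty}\|g\|_{L^2}\le C\|f\|_{H^1}\|g\|_{L^2}$ by Sobolev embedding. For $\sigma>1$ we have $\sigma-1\ge0$, so duality is not needed. We use the Bony decomposition of Definition \ref{Defintion 2.4},
$$fg=T_fg+T_gf+R(f,g),$$
and treat the three pieces separately.
\begin{itemize}
\item $\|T_fg\|_{H^{\sigma-1}}\le C\|f\|_{L^\infty}\|g\|_{H^{\sigma-1}}$ by Lemma \ref{Lemma 2.5}(1), and $\|f\|_{L^\infty}\lesssim\|f\|_{H^\sigma}$ since $\sigma>\frac12$.
\item For $T_gf$ we regard $g\in B^{\sigma-1}_{2,2}\hookrightarrow B^{\sigma-\frac32}_{\infty,2}$. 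Since $\sigma-\frac32<0$ when $\sigma<\frac32$, Lemma \ref{Lemma 2.5}(1) gives $\|T_gf\|_{B^{2\sigma-\frac32}_{2,1}}\lesssim\|g\|_{H^{\sigma-1}}\|f\|_{H^\sigma}$, and $2\sigma-\frac32\ge\sigma-1$ because $\sigma\ge\frac12$. When $\sigma\ge\frac32$, $g$ is bounded and we use the first estimate of Lemma \ref{Lemma 2.5}(1) with $\|f\|_{H^{\sigma-1}}\le\|f\|_{H^\sigma}$.
\item For the remainder, $R(f,g)$ has total regularity $2\sigma-1>0$. Lemma \ref{Lemma 2.5}(2) with $p_1=2$, $p_2=2$, $p=1$ gives $\|R\|_{B^{2\sigma-1}_{1,1}}\lesssim\|f\|_{H^\sigma}\|g\|_{H^{\sigma-1}}$, and the embedding $B^{2\sigma-1}_{1,1}\hookrightarrow B^{2\sigma-\frac32}_{2,1}\hookrightarrow H^{\sigma-1}$ finishes this term.
\end{itemize}

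\emph{Case $\frac12<\sigma<1$.} The same three-term decomposition applies verbatim: every index condition used above, namely $\sigma-\frac32<0$, $2\sigma-1>0$ and $2\sigma-\frac32\ge\sigma-1$, holds for all $\sigma>\frac12$. So the argument is in fact uniform in $\sigma>\frac12$, and the duality step is unnecessary.

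The main obstacle is the remainder term $R(f,g)$ near the endpoint $\sigma\downarrow\frac12$. There the positivity $s_1+s_2=2\sigma-1>0$ degenerates, and one must use the $L^1$-based Besov space followed by Bernstein's inequality (Proposition \ref{Proposition 2.1}) to return to $L^2$, losing exactly half a derivative. That loss is affordable precisely because $2\sigma-\frac32\ge\sigma-1\iff\sigma\ge\frac12$. I would verify this embedding carefully, since it is where the constant $c_\sigma$ blows up as $\sigma\to\frac12$.
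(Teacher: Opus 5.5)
The paper gives no proof of this lemma, only the citation. The standard argument behind it splits at $\sigma=1$. For $\sigma\ge1$ one has $0\le\sigma-1<\sigma$ and applies the mixed product estimate $\|fg\|_{H^{s}}\le c\|f\|_{H^{\sigma}}\|g\|_{H^{s}}$ ($0\le s\le\sigma$, $\sigma>\frac12$). For $\frac12<\sigma<1$ one dualizes, $\|fg\|_{H^{\sigma-1}}=\sup_{\|h\|_{H^{1-\sigma}}\le1}|\langle g,fh\rangle|$, and applies the same estimate with $s=1-\sigma<\sigma$.

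Your route is genuinely different: a single Bony decomposition, estimated with Lemma \ref{Lemma 2.5} and Bernstein, with no duality at all. The three bounds are correct:
\begin{itemize}
\item $T_fg$ via $\|f\|_{L^\infty}\lesssim\|f\|_{H^\sigma}$;
\item $T_gf$ via $H^{\sigma-1}\hookrightarrow B^{\sigma-\frac32}_{\infty,2}$ and the negative-index paraproduct estimate;
\item $R(f,g)\in B^{2\sigma-1}_{1,1}\hookrightarrow B^{2\sigma-\frac32}_{2,1}\hookrightarrow H^{\sigma-1}$.
\end{itemize}
This makes transparent exactly where $\sigma>\frac12$ enters, and it adapts readily to other Besov scales $B^{\sigma}_{p,r}$. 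The duality proof is shorter, but it presupposes the mixed-index product estimate, which is itself usually proved by the same paraproduct bookkeeping. Your first sentence announces Lemma \ref{Lemma 2.6} for $\sigma\ge1$. That lemma is the equal-index algebra property and only covers $\sigma>\frac32$. Your body rightly never uses it, so drop that sentence along with the unused duality display.

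One step is wrong as written. For $\sigma\ge\frac32$ you assert that $g$ is bounded, but this fails at $\sigma=\frac32$, since $H^{1/2}\not\hookrightarrow L^\infty$. There the exponent $t=\sigma-\frac32=0$ is also excluded from the second estimate of Lemma \ref{Lemma 2.5}(1). Likewise, your remark that $\sigma-\frac32<0$ ``holds for all $\sigma>\frac12$'' is false, and you use it to claim uniformity.

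Both problems are repaired at once by always taking $t=-1$. Since $\sigma-\frac32\ge-1$, you have $H^{\sigma-1}\hookrightarrow B^{\sigma-\frac32}_{\infty,2}\hookrightarrow B^{-1}_{\infty,2}$. Hence $\|T_gf\|_{B^{\sigma-1}_{2,1}}\lesssim\|g\|_{B^{-1}_{\infty,2}}\|f\|_{B^{\sigma}_{2,2}}\lesssim\|g\|_{H^{\sigma-1}}\|f\|_{H^\sigma}$ for every $\sigma>\frac12$, and your argument then really is uniform in $\sigma$.
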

  
 \begin{lemm}\cite{chen2016blowup}\label{Lemma 2.9}
 	Let $f\in C^1(\mathbb{R}), a > 0, b > 0$ and $f(0) < -\sqrt{\frac{b}{a}}.$
 	If \begin{align*}
 		f^{\prime}(t)\leq-af^2(t)+b,
 	\end{align*}
 	then
 	\begin{align*}
 		f(t)\to-\infty\quad\mathrm{as}\quad t\to t^*\leq\frac{1}{2\sqrt{ab}}\ln\left(\frac{f(0)-\sqrt{\frac{b}{a}}}{f(0)+\sqrt{\frac{b}{a}}}\right).
 	\end{align*}
 \end{lemm}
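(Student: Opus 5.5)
This is a classical Riccati comparison argument, so I will work directly with $F(t)=f(t)-\sqrt{b/a}$ and $G(t)=f(t)+\sqrt{b/a}$.

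\emph{Step 1: $f$ stays below $-\sqrt{b/a}$.} Since $f(0)<-\sqrt{b/a}$, we have $f'(0)\le -af(0)^2+b<0$. Suppose $t_1>0$ is the first time with $f(t_1)=-\sqrt{b/a}$. On $[0,t_1]$ we have $f(t)<-\sqrt{b/a}$, hence $f^2>b/a$ and $f'<0$ there. So $f$ is decreasing on $[0,t_1]$, which gives $f(t_1)\le f(0)<-\sqrt{b/a}$, a contradiction. Consequently, for as long as $f$ exists, $f(t)<f(0)<-\sqrt{b/a}$ and $f$ is strictly decreasing.

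\emph{Step 2: integrate the inequality.} Factor $-af^2+b=-a(f-\sqrt{b/a})(f+\sqrt{b/a})$. By Step 1 both factors are negative, so their product $(f-\sqrt{b/a})(f+\sqrt{b/a})$ is positive. Dividing $f'\le -a(f-\sqrt{b/a})(f+\sqrt{b/a})$ by this positive quantity and using partial fractions,
\[
\frac{d}{dt}\ln\frac{f-\sqrt{b/a}}{f+\sqrt{b/a}}=\frac{2\sqrt{b/a}\,f'}{(f-\sqrt{b/a})(f+\sqrt{b/a})}\le -2\sqrt{ab}.
\]
Integrating from $0$ to $t$ gives
\[
\frac{f(t)-\sqrt{b/a}}{f(t)+\sqrt{b/a}}\le \frac{f(0)-\sqrt{b/a}}{f(0)+\sqrt{b/a}}\,e^{-2\sqrt{ab}\,t}.
\]
The left-hand side has the form $1+\frac{2\sqrt{b/a}}{-(f+\sqrt{b/a})}$, which is always greater than $1$ and tends to $1^{+}$ exactly when $f\to-\infty$.

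\emph{Step 3: locate the blow-up time.} Set $t^{\ast}=\frac{1}{2\sqrt{ab}}\ln\frac{f(0)-\sqrt{b/a}}{f(0)+\sqrt{b/a}}$. This quantity is positive, because the ratio inside the logarithm is greater than $1$. At $t=t^{\ast}$ the right-hand side of the inequality in Step 2 equals $1$. Since the left-hand side is strictly greater than $1$, $f$ cannot remain finite on all of $[0,t^{\ast}]$. Solving the inequality explicitly for $f$ gives
\[
f(t)\le -\sqrt{b/a}\,\frac{1+K(t)}{1-K(t)},\qquad K(t)=\frac{f(0)-\sqrt{b/a}}{f(0)+\sqrt{b/a}}\,e^{-2\sqrt{ab}\,t}.
\]
As $t$ increases toward $t^{\ast}$, $K(t)\downarrow 1$, so the bound forces $f\to-\infty$ no later than $t^{\ast}$.

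\emph{Main obstacle.} The only delicate point is the interpretation of the hypothesis $f\in C^1$. Read literally on all of $\mathbb{R}$, it makes $f$ finite everywhere, so the conclusion must be understood on the maximal interval of existence. I will interpret it this way: $f$ is $C^1$ on its maximal interval $[0,T)$, and the argument above shows $T\le t^{\ast}$ with $f\to-\infty$ as $t\to T$. The sign bookkeeping in Step 2 — checking that the quotient stays greater than $1$ and that dividing by the product preserves the direction of the inequality — is the step to check carefully; everything else is routine.
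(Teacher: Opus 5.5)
Your approach is the standard Riccati comparison for this lemma. The paper gives no proof of its own and only quotes the result from the literature. Steps 1, 2 and the first half of Step 3 are correct: the quotient $\frac{f-\sqrt{b/a}}{f+\sqrt{b/a}}$ stays strictly above $1$, while its upper bound reaches $1$ at $t=t^{*}$. Hence $f$ cannot be a finite $C^1$ function on all of $[0,t^{*}]$.

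There is, however, a sign error in your explicit bound. Write $c=\sqrt{b/a}$. The inequality $\frac{f-c}{f+c}\le K$ together with $f+c<0$ gives $f-c\ge K(f+c)$, i.e.\ $(1-K)f\ge c(1+K)$. Since $K(t)>1$ for $t<t^{*}$, this yields
\[
f(t)\le -\sqrt{b/a}\,\frac{K(t)+1}{K(t)-1}.
\]
Your displayed bound $-\sqrt{b/a}\,\frac{1+K}{1-K}$ is positive for $K>1$ and tends to $+\infty$ as $K\downarrow 1$. As written it is vacuous and does not force $f\to-\infty$; with the sign corrected it does.

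Also note that what the argument actually establishes is $T\le t^{*}$. If $T<t^{*}$, the differential inequality alone does not give $f(t)\to-\infty$ as $t\to T$. A function that is $C^1$ on $[0,T)$ can satisfy $f'\le -af^2+b$ with $f'\to-\infty$ while $f$ stays bounded. So your claim that the argument shows ``$f\to-\infty$ as $t\to T$'' on the maximal interval is not justified by the lemma itself. In applications that part must come from the maximality of the lifespan, as in Theorem \ref{Theorem 1.4}.
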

 
 \begin{lemm}\cite{du2024some}\label{Lemma 2.10}
 We define the weighted function
 \[
 \psi_N(x) =
 \begin{cases}
 	\bigl(\ln(e + \beta + |x|)\bigr)^\beta, & 0 \le |x| < N, \\
 	\bigl(\ln(e + \beta + N)\bigr)^\beta, & |x| \ge N,
 \end{cases}
 \]
 where $\beta \in (0, \infty)$ and $N \in \mathbb{R}^+$. Therefore, for all $N$, we have $|\psi_N'(x)| \le \gamma \psi_N(x)$ a.e.
 $x \in \mathbb{R}$ where $\gamma = \dfrac{\beta}{(e+\beta)\ln(e+\beta)} < 1$ and
 \[
 \omega_N(x) := \psi_N(x) \int_{\mathbb{R}} \frac{e^{-|x-y|}}{\psi_N(y)} \, dy \le C_\beta,
 \]
 where the constant $C_\beta$ depends on $\beta$. Furthermore, one can get $(\psi_N(x)e^{-\gamma x})' \le 0$ and
 $(\psi_N(x)e^{\gamma x})' \ge 0$ with respect to $x$ for all $N$.
 \end{lemm}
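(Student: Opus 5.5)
We verify the properties of $\psi_N$ in Lemma \ref{Lemma 2.10} directly, region by region. By the evenness of $\psi_N$ it suffices to treat $x\ge0$. On $0\le x<N$, $\psi_N(x)=(\ln(e+\beta+x))^\beta$, so
\[
\psi_N'(x)=\frac{\beta}{(e+\beta+x)\ln(e+\beta+x)}\,\psi_N(x).
\]
The function $x\mapsto (e+\beta+x)\ln(e+\beta+x)$ is increasing, so the prefactor is maximal at $x=0$, where it equals $\gamma=\frac{\beta}{(e+\beta)\ln(e+\beta)}$. On $|x|>N$ we have $\psi_N'=0$, and the corner points $x=\pm N$ (and $x=0$, where $\psi_N$ is Lipschitz) form a null set, which yields $|\psi_N'|\le\gamma\psi_N$ a.e. To see that $\gamma<1$, note that $\ln(e+\beta)>1$ and $e+\beta>\beta$, so $(e+\beta)\ln(e+\beta)>\beta$. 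The monotonicity claims follow immediately. Indeed, $(\psi_Ne^{-\gamma x})'=e^{-\gamma x}(\psi_N'-\gamma\psi_N)\le0$ and $(\psi_Ne^{\gamma x})'=e^{\gamma x}(\psi_N'+\gamma\psi_N)\ge0$ a.e. Since $\psi_N$ is Lipschitz (hence absolutely continuous), these a.e.\ derivative signs give genuine monotonicity.

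For the bound on $\omega_N$, the key step is to control the ratio $\psi_N(x)/\psi_N(y)$ by an exponential in $|x-y|$. Integrating the inequality $|(\ln\psi_N)'|\le\gamma$, which holds a.e. with $\psi_N\ge(\ln(e+\beta))^\beta>0$ so the logarithm is defined, we obtain
\[
\frac{\psi_N(x)}{\psi_N(y)}\le e^{\gamma|x-y|}.
\]
Consequently,
\[
\omega_N(x)\le\int_{\mathbb{R}}e^{-(1-\gamma)|x-y|}\,dy=\frac{2}{1-\gamma}.
\]
This bound is uniform in $N$ and $x$, and it depends only on $\beta$, so we may take $C_\beta=\frac{2}{1-\gamma}$.

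The main (though mild) point is the strict inequality $\gamma<1$: it is what makes the kernel $e^{-(1-\gamma)|x-y|}$ integrable, and so it is essential. The other technicality is justifying the a.e.\ derivative computation and its integration across the kinks at $|x|=N$ and at $x=0$. Both are handled by the Lipschitz continuity of $\psi_N$.
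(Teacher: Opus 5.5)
Your proof is correct and complete. The paper does not prove this lemma; it imports it from \cite{du2024some} without argument. So there is no in-paper proof to compare with, and your verification is self-contained.

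Each claim checks out:
\begin{itemize}
\item On $0<|x|<N$ the logarithmic derivative is $\beta/\bigl((e+\beta+|x|)\ln(e+\beta+|x|)\bigr)$. It is largest at $x=0$, where it equals $\gamma$.
\item $\gamma<1$ because $(e+\beta)\ln(e+\beta)>e+\beta>\beta$.
\item The monotonicity follows from $\psi_N'-\gamma\psi_N\le 0$ and $\psi_N'+\gamma\psi_N\ge 0$ a.e., together with the Lipschitz continuity of $\psi_N$.
\item The $\omega_N$ bound follows from $\psi_N(x)/\psi_N(y)\le e^{\gamma|x-y|}$. This gives the explicit constant $C_\beta=2/(1-\gamma)$, independent of $N$ and $x$.
\end{itemize}

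Your ratio bound is exactly the integrated form of the two monotonicity statements. For $y\le x$ it follows because $\psi_Ne^{-\gamma x}$ is nonincreasing, and for $y\ge x$ because $\psi_Ne^{\gamma x}$ is nondecreasing. So you could derive the $\omega_N$ estimate directly from the last part of the lemma instead of passing through $\ln\psi_N$; the two routes are equivalent.

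The same argument, using $\varphi_N'/\varphi_N\le\beta/(1+\beta)=\lambda$, also proves Lemma \ref{Lemma 2.11}, with $C_\beta=2(1+\beta)$.
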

 
 \begin{lemm}\cite{du2024some}\label{Lemma 2.11}
 	If we take the weighted function
 	\[
 	\varphi_N(x) =
 	\begin{cases}
 		(1+\beta+|x|)^\beta, & 0 \le |x| < N, \\
 		(1+\beta+N)^\beta,   & |x| \ge N,
 	\end{cases}
 	\]
 	where $\beta \in (0,\infty)$ and $N \in \mathbb{R}^+$. Then, for all $N$, we have $|\varphi_N'(x)| \le \lambda \varphi_N(x)$ a.e. $x \in \mathbb{R}$ where $\lambda = \frac{\beta}{1+\beta} < 1$ and
 	\[
 	\omega_N(x) := \varphi_N(x) \int_{\mathbb{R}} \frac{e^{-|x-y|}}{\varphi_N(y)} \, dy \le C_\beta,
 	\]
 	where the constant $C_\beta$ depends on $\beta$. Moreover, one can get $(\varphi_N(x)e^{-\lambda x})' \le 0$ and $(\varphi_N(x)e^{\lambda x})' \ge 0$ with respect to $x$ for all $N$.
 \end{lemm}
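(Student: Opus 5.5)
We establish the three assertions of Lemma \ref{Lemma 2.11} separately, working with the explicit piecewise form of $\varphi_N$. The function $\varphi_N$ is even, continuous and piecewise smooth; its only possible kinks are at $x=0$ and $|x|=N$. Hence it is Lipschitz and differentiable almost everywhere.

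\textbf{The bound on $\varphi_N'$.} For $|x|>N$ we have $\varphi_N'=0$. For $0<|x|<N$ a direct computation gives
\[
\varphi_N'(x)=\beta\,\mathrm{sgn}(x)(1+\beta+|x|)^{\beta-1}.
\]
Consequently
\[
\frac{|\varphi_N'(x)|}{\varphi_N(x)}=\frac{\beta}{1+\beta+|x|}\le \frac{\beta}{1+\beta}=\lambda<1 ,
\]
which is the first claim. The monotonicity statements follow directly. Indeed,
\[
(\varphi_N e^{-\lambda x})'=e^{-\lambda x}(\varphi_N'-\lambda\varphi_N)\le e^{-\lambda x}(|\varphi_N'|-\lambda\varphi_N)\le 0 \quad \text{a.e.}
\]
Since $\varphi_N e^{-\lambda x}$ is absolutely continuous, this gives genuine monotonicity. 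The case $(\varphi_N e^{\lambda x})'\ge0$ is symmetric.

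\textbf{The bound on $\omega_N$.} We first record a submultiplicativity-type inequality:
\[
\varphi_N(x)\le C\,\varphi_N(y)\,(1+|x-y|)^{\beta}\qquad\text{for all } x,y,
\]
with $C$ depending only on $\beta$ and not on $N$. Let $g(r)=(1+\beta+\min(r,N))^\beta$, so that $\varphi_N(x)=g(|x|)$. The function $g$ is nondecreasing, and $\min(|x|,N)\le \min(|y|,N)+|x-y|$. Therefore
\[
g(|x|)\le (1+\beta+\min(|y|,N)+|x-y|)^\beta\le (1+\beta+\min(|y|,N))^\beta(1+|x-y|)^\beta,
\]
using $a+b\le a(1+b)$ for $a\ge1$, $b\ge 0$. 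Inserting this into $\omega_N$ yields
\[
\omega_N(x)\le \int_{\mathbb{R}} e^{-|x-y|}(1+|x-y|)^\beta\,dy=2\int_0^\infty e^{-r}(1+r)^\beta\,dr=:C_\beta<\infty,
\]
which is independent of $N$ and of $x$.

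An alternative route is to use only $|\varphi_N'|\le\lambda\varphi_N$. This gives $\varphi_N(x)\le \varphi_N(y)e^{\lambda|x-y|}$, so that
\[
\omega_N\le \int_{\mathbb{R}} e^{-(1-\lambda)|z|}\,dz=\frac{2}{1-\lambda}=2(1+\beta).
\]
This is cleaner and explains why $\lambda<1$ matters; we would present this version. The only delicate point is justifying the integrated Grönwall bound across the kinks at $|x|=N$ and $x=0$. This follows from absolute continuity of $\ln\varphi_N$, which is Lipschitz with constant $\lambda$.
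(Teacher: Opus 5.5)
The paper does not prove this lemma; it quotes it from \cite{du2024some}, so there is no in-paper argument to compare with. Your proof is correct and self-contained. On $0<|x|<N$ you have $|\varphi_N'|/\varphi_N=\beta/(1+\beta+|x|)\le\lambda$, and $\varphi_N'=0$ for $|x|>N$. The two monotonicity claims follow from $\mp\varphi_N'\le\lambda\varphi_N$. Both of your routes give $\omega_N\le C_\beta$ uniformly in $N$ and $x$.

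The second route is the better one, as you suggest, for two reasons.

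First, its constant. It gives the explicit bound $C_\beta=2/(1-\lambda)=2(1+\beta)$. The moderate-weight route gives $2\int_0^\infty e^{-r}(1+r)^\beta\,dr$, which grows roughly like $\Gamma(\beta+1)$. Either is acceptable, since the lemma only asks for dependence on $\beta$.

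Second, the ``delicate point'' you flag needs no separate justification: it is exactly the content of the monotonicity assertions you have already proved. Because $\varphi_N e^{-\lambda x}$ is nonincreasing, $\varphi_N(x)\le\varphi_N(y)e^{\lambda(x-y)}$ for $y\le x$. Because $\varphi_N e^{\lambda x}$ is nondecreasing, $\varphi_N(x)\le\varphi_N(y)e^{\lambda(y-x)}$ for $x\le y$. Splitting the integral defining $\omega_N$ at $y=x$ then gives the bound $2/(1-\lambda)$ immediately.

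This also explains why the lemma records the monotonicity statements next to the $\omega_N$ bound. They are the two-sided comparison $\varphi_N(x)\le\varphi_N(y)e^{\lambda|x-y|}$, and $\lambda<1$ is precisely what makes $e^{-(1-\lambda)|x-y|}$ integrable.
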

 
 \section{Local well-posedness}
 Since the presence of the terms $uv\eta_x$, $vuu_x$ and $uvv_x$, \eqref{(1.3)} cannot be treated as a system of ODEs in $(H^s)^3$. Indeed, if $(\rho,u,v) \in H^s\times H^s\times H^s$ with $s>\frac{3}{2}$, then we have $uv\eta_x$, $vuu_x$ and $uvv_x$ $\in H^{s-1}$. We thus need to mollify these there terms by means of the Friedrichs mollifier $J_{\epsilon}$, which is defined as follows. We first fix a Schwartz function $j(x)\in S(\mathbb{R})$ that satisfies $0\le \hat{j}(\xi)\le 1$ for all $\xi \in \mathbb{R}$ and $\hat{j}(\xi)=1$ for $\xi \in [-1,1]$. Next, let $j_{\epsilon}:=(\frac{1}{\epsilon})j(\frac{x}{\epsilon})$, and finally, define
 \begin{align}\label{(3.1)}
 	J_{\epsilon}f:=j_{\epsilon}*f.
 \end{align}
 Now applying $J_{\epsilon}$ to the system \eqref{(1.3)}, we obtain the following initial value problem for the mollified system
 \begin{align}\label{(3.2)}
 	\begin{cases}
 		\eta_{\epsilon,t}+ J_{\epsilon}[(1+J_{\epsilon}\eta_{\epsilon})(J_{\epsilon}u_{\epsilon,x})(J_{\epsilon}v_{\epsilon}) + (1+J_{\epsilon}\eta_{\epsilon})(J_{\epsilon}u_{\epsilon})(J_{\epsilon}v_{\epsilon,x}) + (J_{\epsilon}\eta_{\epsilon,x})(J_{\epsilon}u_{\epsilon})(J_{\epsilon}v_{\epsilon})  ] =0,\\
 		u_{\epsilon,t} + J_{\epsilon}[(J_{\epsilon}u_{\epsilon})(J_{\epsilon}v_{\epsilon})(J_{\epsilon}u_{\epsilon,x})] + F(u_{\epsilon},v_{\epsilon}) = 0, \\
 		v_{\epsilon,t} + J_{\epsilon}[(J_{\epsilon}v_{\epsilon})(J_{\epsilon}u_{\epsilon})(J_{\epsilon}v_{\epsilon,x})] + G(u_{\epsilon},v_{\epsilon}) = 0, \\
 		u_{\epsilon}(0, x) = u_0(x), \quad v_{\epsilon}(0, x) = v_0(x).
 	\end{cases}
 \end{align}
  Hence, it is easy to see that
  \begin{align*}
  	\dfrac{\mathrm{d}}{\mathrm{d}t}\| \eta_{\epsilon} \|_{H^{s}} \le C(\| \eta\|_{L^{\infty}} + \| \eta_x\|_{L^{\infty}} + 1)\| (uv)_x\|_{H^{s}}.  
  \end{align*}
  Together with Lemma \ref{Lemma 2.7} , we thus get
  \begin{align}\label{(3.3)}
  	\dfrac{\mathrm{d}}{\mathrm{d}t}\| \eta_{\epsilon} \|_{H^{s}} \le C(\| \eta\|_{H^{s}}+1)\|(uv)_x\|_{H^{s}}.
  \end{align}
 Applying the operator $D^s$ on the both sides of system $\eqref{(1.3)}_2$, then multiplying by $D^su$ on the right, and integrating with respect to $x$ over $\mathbb{R}$, we obtain
 \begin{align*}
 	\int_{\mathbb{R}} D^s(\partial_{t}u_{\epsilon})u_{\epsilon} dx = -\int_{\mathbb{R}} D^sJ_{\epsilon}[(J_{\epsilon}v_{\epsilon})(J_{\epsilon}u_{\epsilon})(J_{\epsilon}u_{\epsilon})_x]\cdot D^sJ_{\epsilon}u_{\epsilon} dx -\int_{\mathbb{R}} D^sF(u_{\epsilon},v_{\epsilon})\cdot D^su_{\epsilon} dx. 
 \end{align*}
 We then have
 \begin{align}\label{(3.4)}
 	\frac{1}{2}\dfrac{\mathrm{d}}{\mathrm{d}t}\| u_{\epsilon}\|_{H^s}^2 = -\int_{\mathbb{R}} D^sJ_{\epsilon}[(J_{\epsilon}v_{\epsilon})(J_{\epsilon}u_{\epsilon})(J_{\epsilon}u_{\epsilon})_x]\cdot D^sJ_{\epsilon}u_{\epsilon} dx -\int_{\mathbb{R}} D^sF(u_{\epsilon},v_{\epsilon})\cdot D^su_{\epsilon} dx. 
 \end{align}
 Since it is easy to check that
 \begin{align*}
 	&\int_{\mathbb{R}} D^sJ_{\epsilon}[(J_{\epsilon}v_{\epsilon})(J_{\epsilon}u_{\epsilon})(J_{\epsilon}u_{\epsilon})_x]\cdot D^sJ_{\epsilon}u_{\epsilon} dx \\&= \int_{\mathbb{R}} [D^s,(J_{\epsilon}v_{\epsilon})(J_{\epsilon}u_{\epsilon})](J_{\epsilon}u_{\epsilon})_x\cdot D^sJ_{\epsilon}u_{\epsilon} dx 
 	+ \int_{\mathbb{R}} (J_{\epsilon}v_{\epsilon})(J_{\epsilon}u_{\epsilon})D^s(J_{\epsilon}u_{\epsilon})_x\cdot D^sJ_{\epsilon}u_{\epsilon}dx.
 \end{align*}
 By Cauchy-Schwarz and Lemma \ref{Lemma 2.7}, we attain
 \begin{align*}
 	&\left| \int_{\mathbb{R}} \left[ D^s, (J_\epsilon v_\epsilon)(J_\epsilon u_\epsilon) \right]  (J_\epsilon u_\epsilon)_x \cdot D^s J_\epsilon u_\epsilon \, dx \right| \\
 	&\leq \left\| \left[ D^s, (J_\epsilon v_\epsilon)(J_\epsilon u_\epsilon) \right] (J_\epsilon u_\epsilon)_x \right\|_{L^2} \left\| D^s J_\epsilon u_\epsilon \right\|_{L^2} \\
 	&\le C\left( \left\| D^s \left[ (J_\epsilon v_\epsilon)(J_\epsilon u_\epsilon) \right] \right\|_{L^2} \left\| \partial_x (J_\epsilon u_\epsilon) \right\|_{L^\infty} + \left\| \partial_x \left[ (J_\epsilon v_\epsilon)(J_\epsilon u_\epsilon) \right] \right\|_{L^\infty} \left\| D^{s-1} \partial_x (J_\epsilon u_\epsilon) \right\|_{L^2} \right) \left\| J_\epsilon u_\epsilon \right\|_{H^s} \\
 	&\le C\left( \left\| (J_\epsilon v_\epsilon)(J_\epsilon u_\epsilon) \right\|_{H^s} \left\| (J_\epsilon u_\epsilon)_x \right\|_{L^\infty} + \left\|  \left[ (J_\epsilon v_\epsilon)(J_\epsilon u_\epsilon) \right]_x \right\|_{L^\infty} \left\| (J_\epsilon u_\epsilon)_x \right\|_{H^{s-1}} \right) \left\| u_\epsilon \right\|_{H^s}.
 \end{align*}
 Finally, combining Lemma \ref{Lemma 2.6} and the Sobolev embedding theorem for $s>\frac{3}{2}$, we obtain  
  \begin{align}\label{(3.5)}
 	&\left| \int_{\mathbb{R}} \left[ D^s, (J_\epsilon v_\epsilon)(J_\epsilon u_\epsilon) \right] (J_\epsilon u_\epsilon)_x \cdot D^s J_\epsilon u_\epsilon \, dx \right| \notag\\
 	&\le C\left( \|v_\epsilon\|_{H^s} \|u_\epsilon\|_{H^s} \|u_\epsilon\|_{C^1} + \|v_\epsilon u_\epsilon\|_{C^1} \|u_\epsilon\|_{H^s} \right) \|u_\epsilon\|_{H^s} \notag\\
 	&\le C\left( \|v_\epsilon\|_{H^s} \|u_\epsilon\|_{H^s}^2 + \|v_\epsilon\|_{H^s} \|u_\epsilon\|_{H^s} \|u_\epsilon\|_{H^s} \right) \|u_\epsilon\|_{H^s} \le C\|v_\epsilon\|_{H^s} \|u_\epsilon\|_{H^s}^3.
 \end{align}
Regarding the nonlocal term of \eqref{(3.4)}, we have
\begin{align}\label{(3.6)}
	\notag\left| \int_{\mathbb{R}} D^s F(u_\epsilon, v_\epsilon) \cdot D^s u_\epsilon \, dx \right|
	&\le C\Big( \|v_\epsilon u_\epsilon u_{\epsilon,x}\|_{H^{s-2}} + \| (u_\epsilon  u_{\epsilon,x} v_{\epsilon,x})_x\|_{H^{s-2}}+ \| v_{\epsilon,x} ( u_{\epsilon,x})^2\|_{H^{s-2}}  \\ 
	&\quad + \|u_\epsilon \partial_x v_\epsilon \partial_x^2 u_\epsilon\|_{H^{s-2}} + \|(\eta_{\epsilon}+1)^2u_{\epsilon}\|_{H^{s-2}} \Big) \|u_\epsilon\|_{H^s}.
\end{align}
Together with Lemma \ref{Lemma 2.6}, Lemma \ref{Lemma 2.7} and Lemma \ref{Lemma 2.8}, we then get
\begin{align}\label{(3.7)}
		\notag &\| v_{\epsilon}u_{\epsilon}u_{\epsilon,x}\|_{H^{s-2}}\le C\|v_{\epsilon}\|_{H^{s-1}}\|u_{\epsilon}\|_{H^{s-1}}\|u_{\epsilon,x}\|_{H^{s-1}}\le C\|v_{\epsilon}\|_{H^s}\|u_{\epsilon}\|^2_{H^s},
		\\&\notag\|(u_{\epsilon}u_{\epsilon,x}v_{\epsilon,x})_x\|_{H^{s-2}}\le C\|u_{\epsilon}u_{\epsilon,x}v_{\epsilon,x}\|_{H^{s-1}}\le C\|v_{\epsilon}\|_{H^s}\|u_{\epsilon}\|^2_{H^s},
		\\&\notag \|(v_{\epsilon,x}(u_{\epsilon,x})^2)_x\|_{H^{s-2}}\le C\| v_{\epsilon,x}(u_{\epsilon,x})^2\|_{H^{2-1}} \le C\|v_{\epsilon}\|_{H^s}\|u_{\epsilon}\|^2_{H^s},
		\\&\|(\eta_{\epsilon}+1)^2u_{\epsilon}\|_{H^{s-2}}\le C(\|\eta_{\epsilon}\|_{L^{\infty}}+1)^2 \| u_{\epsilon}\|_{H^s} \le C(\|\eta_{\epsilon}\|_{H^s}+1)^2\|u_{\epsilon}\|_{H^s}.
\end{align}
Regarding the fourth term of \eqref{(3.6)}, the presence of $u_{\epsilon,xx}$ suggests that we can not apply the Lemma \ref{Lemma 2.6}, for then we would be forced to require $s>\frac{5}{2}$. Instead, we employ the Lemma \ref{Lemma 2.8} so that for $s>\frac{3}{2}$,
\begin{align}\label{(3.8)}
\notag	\|u_\epsilon v_{\epsilon,x} u_{\epsilon,xx}\|_{H^{s-2}}
	&\le c_{s-1}\|u_\epsilon v_{\epsilon,x}\|_{H^{s-1}} \|u_{\epsilon,xx}\|_{H^{s-2}} \\
	&\le C\|u_\epsilon\|_{H^{s-1}} \|v_{\epsilon,x}\|_{H^{s-1}} \|u_\epsilon\|_{H^s}
	\le C\|v_\epsilon\|_{H^s} \|u_\epsilon\|_{H^s}^2.
\end{align}
Hence, for $s>\frac{3}{2}$, we have
\begin{align}\label{(3.9)}
	 \int_{\mathbb{R}} D^s F(u_\epsilon, v_\epsilon) \cdot D^s u_\epsilon dx \le C(\|v_{\epsilon}\|_{H^s}\|u_{\epsilon}\|^3_{H^s} + (\|\eta_{\epsilon}\|_{H^s}+1)^2\|u_{\epsilon}\|^2_{H^s}).
\end{align}
Therefore, it is easy to see that
\begin{align*}
	\frac{1}{2}\dfrac{\mathrm{d}}{\mathrm{d}t}\|u_{\epsilon}\|_{H^s}^2 \le C(\|v_{\epsilon}\|_{H^s}\|u_{\epsilon}\|_{H^s}^3+(\|\eta_{\epsilon}\|_{H^s}+1)^2\|u_{\epsilon}\|^2_{H^s}).
\end{align*}
which implies that
\begin{align}\label{(3.10)}
	\dfrac{\mathrm{d}}{\mathrm{d}t}\|u_{\epsilon}\|_{H^s}\le C(\|v_{\epsilon}\|_{H^s}\|u_{\epsilon}\|_{H^s}^2+(\|\eta_{\epsilon}\|_{H^s}+1)^2\|u_{\epsilon}\|_{H^s}).
\end{align}
The analogous inequality for $v_{\epsilon}$ reads
\begin{align}\label{(3.11)}
	\dfrac{\mathrm{d}}{\mathrm{d}t}\|v_{\epsilon}\|_{H^s}\le C(\|u_{\epsilon}\|_{H^s}\|v_{\epsilon}\|_{H^s}^2+(\|\eta_{\epsilon}\|_{H^s}+1)^2\|v_{\epsilon}\|_{H^s}).
\end{align}
Combining \eqref{(3.3)}, \eqref{(3.10)} and \eqref{(3.11)}, we obtain
\begin{align}\label{(3.12)}
	\dfrac{\mathrm{d}}{\mathrm{d}t}(\| \eta_{\epsilon} \|_{H^{s}}+\|u_{\epsilon}\|_{H^s}+\|v_{\epsilon}\|_{H^s})\le C(\| \eta_{\epsilon} \|_{H^{s}}+\|u_{\epsilon}\|_{H^s}+\|v_{\epsilon}\|_{H^s}+1)^3,
\end{align}
Hence, we get that 
\begin{align*}
	\| \eta_{\epsilon} \|_{H^{s}}+\|u_{\epsilon}\|_{H^s}+\|v_{\epsilon}\|_{H^s} \le \frac{\| \eta_{0} \|_{H^{s}}+\|u_{0}\|_{H^s}+\|v_{0}\|_{H^s}}{1-C(\| \eta_{0} \|_{H^{s}}+\|u_{0}\|_{H^s}+\|v_{0}\|_{H^s})^2t}.
\end{align*}
Thus, for the common lifespan $T$ equal to
\begin{align}\label{(3.13)}
	T=\frac{1}{4C(\| \eta_{0} \|_{H^{s}}+\|u_{0}\|_{H^s}+\|v_{0}\|_{H^s})^2},
\end{align}
Let $U_{\epsilon}=(u_{\epsilon},v_{\epsilon},\eta_{\epsilon})$, $U=(u,v,\eta)$ and define
\begin{align*}
	\| U_{\epsilon}\|_{H^s}\le \| \eta_{\epsilon} \|_{H^{s}}+\|u_{\epsilon}\|_{H^s}+\|v_{\epsilon}\|_{H^s}, \quad \|U\|_{H^s}\le \| \eta\|_{H^s}+\| u\|_{H^s}+\|v\|_{H^s}.
\end{align*}
Now the fundamental theorem for ODEs in Banach spaces \cite{dieudonne2011foundations} implies that there exist a unique solution $U_{\epsilon}$ for $0\le t \le T$ satisfying the size estimate
\begin{align*}
	\| U_{\epsilon}\|_{H^s}\le \sqrt{2}\|U_0\|_{H^s}, \quad 0\le t\le T.
\end{align*}
Then by a standard way in \cite{himonas2016novikov}, we finish the proof of Theorem \ref{Theorem 1.1}.

We now prove the blow-up criteria for the 3-component DP equation. \\                      
 \textbf{Proof of the Theorem \ref{Theorem 1.2}}:\\
  Together with \eqref{(1.3)}, it is easy to check that
 \begin{align}\label{(4.1)}
 	\| u_t\|_{B^2_{2,1}} \le \|vuu_x\|_{B^2_{2,1}} + \| F\|_{B^2_{2,1}},  
 \end{align}
 where we denote $F$ as follows
 \begin{align*}
 	F:=p * (3 u v u_x + 2uv_x u_{xx} + 2u_x^2 v_x + uv_{xx}u_x + \left(\eta+1\right)^2u).
 \end{align*}
 Regarding the nonlocal term of \eqref{(4.1)}, we have
 \begin{align*}
 	\| F\|_{B^2_{2,1}} \le C\left(\| vuu_x\|_{B^{0}_{2,1}} + \| uu_xv_x\|_{B^{1}_{2,1}} + \| v_xu_x^2\|_{B^{0}_{2,1}} + \| uv_xu_{xx}\|_{B^{0}_{2,1}} + \| \left(\eta+1\right)^2u \|_{B^0_{2,1}} \right).
 \end{align*}
 By the Bony decomposition and Lemma \ref{Lemma 2.5}, one gets that
 \begin{align*}
 	 &\| T_{uv_x} u_{xx}\|_{B^{0}_{2,1}} \le C\| uv_x\|_{L^{\infty}}\| u_{xx}\|_{B^0_{2,1}} \le C\| u\|_{L^{\infty}}\|v_x\|_{L^{\infty}}\| u\|_{B^2_{2,1}},
 	 \\& \| T_{u_{xx}} uv_x\|_{B^0_{2,1}} \le C\| u_{xx}\|_{B^{-1}_{\infty,\infty}}\| uv_x\|_{B^1_{2,1}} \le C\| u_x\|_{L^{\infty}}\|v_x\|_{L^{\infty}}\| u\|_{B^2_{2,1}},
 	 \\& \| R(u_{xx},uv_x)\|_{B^0_{2,1}}  \le C\| u_{xx}\|_{B^0_{2,1}}\| uv_x\|_{L^{\infty}} \le C\| u\|_{L^{\infty}}\|v_x\|_{L^{\infty}}\| u\|_{B^2_{2,1}}.
 \end{align*}
 We than obtain that
 \begin{align*}
 	\| uv_xu_{xx}\|_{B^0_{2,1}} \le C\| u\|_{W^{1,\infty}}\|v_x\|_{L^{\infty}}\| u\|_{B^2_{2,1}} \le C\| u\|_{W^{1,\infty}}\| v\|_{W^{1,\infty}}\| u\|_{B^2_{2,1}}.
 \end{align*}
 By a similar way, it is easy to see that
 \begin{align*}
 	&\| F\|_{B^2_{2,1}} \le C(\| u\|_{W^{1,\infty}}\| v\|_{W^{1,\infty}} + \| \eta +1\|_{L^{\infty}}^2)\| u\|_{B^2_{2,1}} ,
 	\\& \| uvu_x\|_{B^2_{2,1}} \le C\| v\|_{L^{\infty}}\| u_x\|_{L^{\infty}}\| u\|_{B^2_{2,1}} \le C\| u\|_{W^{1,\infty}}\| v\|_{W^{1,\infty}}\| u\|_{B^2_{2,1}}.
 \end{align*}
 Therefore, we have
 \begin{align}\label{(4.2)}
 	\| u_t\|_{B^2_{2,1}} \le C(\| u\|_{W^{1,\infty}}\| v\|_{W^{1,\infty}} + \| \eta\|_{L^{\infty}}^2 +1)\| u\|_{B^2_{2,1}}.
 \end{align}
 The analogous inequality for $v$ and $\eta$ reads
 \begin{align}\label{(4.3)}
 	\| v_t\|_{B^2_{2,1}} \le C(\| u\|_{W^{1,\infty}}\| v\|_{W^{1,\infty}} + \| \eta \|_{L^{\infty}}^2 +1)\| v\|_{B^2_{2,1}},
 \end{align}
 and
 \begin{align}\label{(4.4)}
 	\| \eta_t\|_{B^2_{2,1}} \le C(\| u\|_{W^{1,\infty}}\| v\|_{W^{1,\infty}} + 1)\| \eta\|_{B^2_{2,1}}.
 \end{align}
 Adding \eqref{(4.2)}, \eqref{(4.3)} and \eqref{(4.4)}, we deduce that
 \begin{align*}
 	\dfrac{\mathrm{d}\left(\| v\|_{B^2_{2,1}} + \|u\|_{B^2_{2,1}} + \| \eta\|_{B^2_{2,1}}\right)}{\mathrm{d}t} \le C(\| u\|_{W^{1,\infty}}\| v\|_{W^{1,\infty}} + \| \eta +1\|_{L^{\infty}}^2)\left(\| v\|_{B^2_{2,1}} + \|u\|_{B^2_{2,1}}+\|\eta\|_{B^2_{2,1}}\right).
 \end{align*}
 Taking advantage of Gronwall's inequality, one gets
 \begin{align*}
 	\| v\|_{B^2_{2,1}} + \|u\|_{B^2_{2,1}}+\| \eta\|_{B^2_{2,1}} \le \left(\| v_0\|_{B^2_{2,1}} + \|u_0\|_{B^2_{2,1}}++\|\eta_0\|_{B^2_{2,1}}\right)e^{C\int_{0}^{t} (\| u\|_{W^{1,\infty}}\| v\|_{W^{1,\infty}} + \| \eta +1\|_{L^{\infty}}^2) d\tau}.
 \end{align*}
 Hence, if $T < \infty$ satisfies $ \int_{0}^{T} (\| u\|_{W^{1,\infty}}\| v\|_{W^{1,\infty}} + \| \eta +1\|_{L^{\infty}}^2)d\tau < \infty$, then we have
 \[
 \limsup_{t \to T} \left( \| v\|_{B^2_{2,1}} + \|u\|_{B^2_{2,1}} + \| \eta\|_{B^2_{2,1}} \right) < \infty,
 \]
 which contracts the assumption that $T < \infty$ is the maximal existence time. This completes of the proof of the theorem.

 \section{Blow-up}
 ~~~~In this section, we will construct some blow-up solutions to the system $(1.3)$. To achieve it, we need the following results.

\begin{prop}\label{Proposition 5.1}
	Assume that $n_0 \in L^{\infty}$, $\eta_0 \in W^{1,1}$ and $u_0 \in W^{1,1}.$ Let $T^*$ be the maximal existence time of the corresponding strong solution $\left(\eta,u,v\right)$ to system \eqref{(1.3)}. Then we have
\begin{align}\label{(5.1)}
	\Vert n\Vert_{L^{\infty}}+\frac{\| \eta\|_{W^{1,1}}}{2}+1+\Vert u\Vert_{W^{1,1}} \le 2\left(\frac{\| \eta_0\|_{W^{1,1}}}{2}+1+\Vert u_0\Vert_{W^{1,1}}+\Vert n_0\Vert_{L^{\infty}}\right),
\end{align}
with
\begin{align*}
	t \le T_1=\frac{1}{40\left(\frac{\| \eta_0\|_{W^{1,1}}}{2}+1+\Vert u_0\Vert_{W^{1,1}}+\Vert n_0\Vert_{L^{\infty}}\right)^{2}}.
\end{align*} 
\end{prop}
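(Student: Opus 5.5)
Set $E(t):=\|n\|_{L^\infty}+\frac{\|\eta\|_{W^{1,1}}}{2}+1+\|u\|_{W^{1,1}}$ and $E_0:=E(0)$. The plan is to derive a closed differential inequality $E'(t)\le C\,E(t)^3$, with an explicit constant such as $C=20$. Integrating this Riccati-type inequality gives
\begin{align*}
E(t)\le \frac{E_0}{\sqrt{1-2CE_0^2t}},
\end{align*}
so $E(t)\le 2E_0$ as long as $2CE_0^2t\le \tfrac34$. The choice $T_1=\frac{1}{40E_0^2}$ fits this once $C\le 15$, or is arranged by tuning constants. The argument is carried out for smooth solutions, with $s$ large so that all manipulations are justified, and then extended by density and the continuity of the data-to-solution map from Theorem \ref{Theorem 1.1}. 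A continuity (bootstrap) argument on $[0,T_1]$ handles the fact that the estimate is a priori.

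The key pointwise tools are Sobolev-type bounds in one dimension:
\begin{align*}
\|f\|_{L^\infty}\le \tfrac12\|f\|_{W^{1,1}}.
\end{align*}
Since $v=p*n$ and $v_x=p_x*n$ with $\|p\|_{L^1}=\|p_x\|_{L^1}=1$, we also have $\|v\|_{L^\infty},\|v_x\|_{L^\infty}\le\|n\|_{L^\infty}$.

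For $\eta$, I would use the equation $\eta_t+((\eta+1)uv)_x=0$ and its $x$-derivative. Multiplying by $\mathrm{sgn}\,\eta$ and $\mathrm{sgn}\,\eta_x$ and integrating, the transport parts $uv\eta_x$ and $uv\eta_{xx}$ yield, after integration by parts, only $\|(uv)_x\|_{L^\infty}\|\eta\|_{W^{1,1}}$. The remaining terms are $(uv)_x$, $(uv)_{xx}$, $\eta(uv)_{xx}$ and $\eta_x(uv)_x$.

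Expanding $(uv)_{xx}=u_{xx}v+2u_xv_x+uv_{xx}$ requires care. I would write $u_{xx}=u-m$ and $v_{xx}=v-n$ and keep $m$ in $L^1$ form, or else bound $\eta u_{xx}v$ through $\|\eta\|_{L^\infty}\|u_{xx}\|_{L^1}$. However, $\|u_{xx}\|_{L^1}$ is not controlled by $E$, so this term must be handled by integrating by parts onto $\eta_x$:
\begin{align*}
\int \eta\, u_{xx} v\,\mathrm{sgn}\,\eta\,dx = -\int (\eta v)_x u_x\,\mathrm{sgn}\,\eta\,dx,
\end{align*}
which is valid away from the zero set of $\eta$ (regularize the sign function). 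The term $uv_{xx}=u(v-n)$ is bounded by $\|u\|_{L^1}\|n\|_{L^\infty}$. Each term then becomes a product of at most three factors of $E$.

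For $u$, I would use the second equation of \eqref{(1.3)} and its derivative. The transport term $uvu_x$ integrates by parts in $L^1$. The nonlocal terms use $\|p*f\|_{L^1}\le\|f\|_{L^1}$ and $\|p_x*f\|_{L^1}\le\|f\|_{L^1}$, but $uv_xu_{xx}$ again contains $u_{xx}$. I would rewrite $2uv_xu_{xx}$ as $(uv_xu_x)_x$-type pieces, since $\partial_x p*$ of a derivative equals $p*(\cdot)-(\cdot)$ by $p_{xx}=p-\delta$. It is likely cleaner to work directly with $m_t+uvm_x+3vu_xm+(\eta+1)^2u=0$, estimating $\|u\|_{W^{1,1}}$ through the $x$-derivative equation. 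The third equation gives $n$ along characteristics $\dot x=uv$:
\begin{align*}
\tfrac{d}{dt}n=-3v_xun+(\eta+1)^2v,
\end{align*}
so $\frac{d}{dt}\|n\|_{L^\infty}\le 3\|v_x\|_{L^\infty}\|u\|_{L^\infty}\|n\|_{L^\infty}+\|\eta+1\|_{L^\infty}^2\|v\|_{L^\infty}$. This is bounded by a constant times $E^3$, using $\|\eta+1\|_{L^\infty}\le \frac{\|\eta\|_{W^{1,1}}}{2}+1\le E$.

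The main obstacle I expect is the second-derivative terms ($u_{xx}$ inside the $u_x$ equation and inside $(uv)_{xx}$ for $\eta_x$), which are not controlled in $L^1$ by $E$. I would first try the integration-by-parts and $p_{xx}=p-\delta$ tricks above to transfer these derivatives. If that fails for the $\eta_x$ estimate, I would use the Lagrangian formulation $\rho\, dx$ conserved along the flow, which gives $\|\eta\|_{L^1}$-type control, and fall back on the structure $(\rho uv)_x$. After these reductions, summing the three estimates yields $E'\le CE^3$, and the ODE comparison finishes the proof.
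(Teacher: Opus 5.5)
Your architecture matches the paper's: the same quantity $E$, the estimate for $n$ along $\dot q=uv$, regularized signs for the $W^{1,1}$ norms, the rewriting $2uv_xu_{xx}+2u_x^2v_x=2(uu_xv_x)_x-2uu_xv_{xx}$ combined with $p_{xx}=p-\delta$, and a cubic integral inequality. The estimates for $n$, for $\|\eta\|_{L^1}$ and for $u$ go through. For $\|u_x\|_{L^1}$ the integration by parts is legitimate, because $uvu_{xx}\,\mathrm{sgn}(u_x)=uv\,\partial_x|u_x|$ puts the derivative on the continuous function $|u_x|$.

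The genuine gap is the estimate for $\|\eta_x\|_{L^1}$, which you leave open. Testing $\eta_{xt}+uv\eta_{xx}+2(uv)_x\eta_x+(\eta+1)(uv)_{xx}=0$ against $\mathrm{sgn}(\eta_x)$ gives exactly
$\frac{d}{dt}\|\eta_x\|_{L^1}=-\int|\eta_x|(uv)_x\,dx-\int\mathrm{sgn}(\eta_x)(\eta+1)(uv)_{xx}\,dx$.
\begin{itemize}
\item The first term needs $\|u_x\|_{L^\infty}$, which $E$ does not control. So the term $\|(uv)_x\|_{L^\infty}\|\eta\|_{W^{1,1}}$ that you treat as harmless is not bounded by $E^3$.
\item The second term needs $\|u_{xx}\|_{L^1}$.
\end{itemize}
Your integration by parts is written with $\mathrm{sgn}\,\eta$ and the factor $\eta$. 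In that form it is true, since $\eta$ vanishes at the jumps of $\mathrm{sgn}\,\eta$. But it is irrelevant, because $u_{xx}$ only occurs in the $\eta_x$-equation. With the correct test function, the step amounts to
\[
\int_{\mathbb{R}}\mathrm{sgn}(\eta_x)\,\partial_x\big((\eta+1)vu_x\big)\,dx=0,
\]
which is false. After regularization the derivative falls on $\eta_x(\eta_x^2+\varepsilon)^{-1/2}$ and produces $\varepsilon\eta_{xx}(\eta_x^2+\varepsilon)^{-3/2}$, which concentrates at the zeros of $\eta_x$. For a single bump $\eta$ with maximum at $z$, the left side equals $2\big((\eta+1)vu_x\big)(z)$. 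This is precisely the term the paper drops in the identity preceding \eqref{(5.5)}; it is also what makes the $|\eta_x|vu_x$ terms cancel there. So the paper does not supply the missing step either.

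Nor can the step be supplied, because \eqref{(5.1)} is incompatible with wave breaking at fixed $E_0$. Your Lagrangian remark actually points the other way: $\rho(t,q)\,q_x=\rho_0$ ties $\rho$ to $q_x$, hence to $u_x$.

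Along $q$, put $J=q_x$ and $h=u_xJ$. The Riccati terms cancel, and
\[
h'=2uv_x\,h-NJ,\qquad J'=v\,h+uv_x\,J,
\]
where $N=p_x*(3vuu_x-uu_xv_{xx}+(\eta+1)^2u)+2p*(uv_xu_x)$ is bounded in $L^\infty$ by a polynomial in $E$.

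Now take $\eta_0=0$, and suppose \eqref{(5.1)} held for $t<\min(T^*,T_1)$. The linear system then gives $|u_x|=|h|\rho\le 2E_0(\|u_{0,x}\|_{L^\infty}+1)e^{Ct}$. By Remark~\ref{Remark 1.2}, the solution would then exist on $[0,T_1]$ with $u_x$ bounded.

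On the other hand, fix $E_0$ and $v_0(x_0)>0$ and let $u_{0,x}(x_0)\to-\infty$, using a steep drop of fixed height. The Riccati argument of Theorem~\ref{Theorem 1.4}, which rests only on \eqref{(5.1)} and Proposition~\ref{Proposition 5.2}, then forces $u_x(t,q(t,x_0))\to-\infty$ before $T_2\le T_1$. This contradicts the boundedness of $u_x$.

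Put differently, breaking makes $q_x\to 0$, so $\rho=\rho_0/q_x$ becomes unbounded. Hence no bound on $\|\eta\|_{W^{1,1}}$, or even on $\|\eta\|_{L^\infty}$, valid up to a time depending only on $E_0$ can exist. The missing ingredient is control of $\|u_x\|_{L^\infty}$, which is exactly what breaking destroys.
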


\begin{proof}
	The characteristics $q(t,x)$ associated the 3-component DP system \eqref{(1.1)}, which is given as follows
	\begin{align}\label{(5.2)}
		\begin{cases}\dfrac{\mathrm{d}}{\mathrm{d}t}q(t,x)=(uv)(t,q(t,x)),&(t,x)\in[0,T^*)\times\mathbb{R},\\q(0,x)=x,&x\in\mathbb{R}.\end{cases}
	\end{align}
	According to the classical theory of ordinary differential equations, we get the above equation has an unique solution
	\begin{align*}
		q(t,x) \in C^1\left([0,T^*) \times \mathbb{R},\mathbb{R}\right).
	\end{align*}
	Moreover, the map $x \to q(t,x)$ is an increasing diffeomorphism. In this way, we have
	\begin{align*}
		\frac{\mathrm{d}n\left(t,q\left(t,x\right)\right)}{\mathrm{d}t} &= n_t\left(t,q\left(t,x\right)\right)+n_x\left(t,q\left(t,x\right)\right) 
		\\&= \left(n_t+n_xuv\right)\left(t,\left(t,x\right)\right) 
		\\&= -3v_xun+(\eta+1)^2v.
	\end{align*}
	Since $u=(1-\partial_{x}^{2})^{-1}m=p*m$ with $p(x)\triangleq\frac{1}{2}e^{-|x|}, u_{x}=(\partial_{x}p)*m.$ and $\Vert p\Vert_{L^1}=\Vert \partial_xp\Vert_{L^1}=1$, together with the Young's inequality, for any $s\in\mathbb{R},$ we obtain
	\begin{align*}
		\Vert u\Vert_{L^{\infty}} \le \Vert m\Vert_{L^{\infty}},\quad \Vert u_{x}\Vert_{L^{\infty}} \le \Vert m\Vert_{L^{\infty}},
	\end{align*}
	thus,
	\begin{align*}
		\Vert u_{xx}\Vert_{L^{\infty}} \le \Vert m\Vert_{L^{\infty}}+\Vert m\Vert_{L^{\infty}}=2\Vert m\Vert_{L^{\infty}}.
	\end{align*}
	In the similar way, one gets that
	\begin{align*}
		\Vert v\Vert_{L^{\infty}} \le \Vert n\Vert_{L^{\infty}}, \Vert v_{x}\Vert_{L^{\infty}} \le \Vert n\Vert_{L^{\infty}},\Vert v_{xx}\Vert_{L^{\infty}} \le 2\Vert n\Vert_{L^{\infty}}.
	\end{align*}
	It is easy to check that
	\begin{align*}
			|(\eta+1)^2v|\le (\| \eta+1\|^2_{L^{\infty}})\| v\|_{L^{\infty}}\le (\frac{\| \eta\|_{W^{1,1}}}{2}+1)^2\| n\|_{L^{\infty}},
	\end{align*}
	and
	\begin{align*}
		|uv_xn| \le |u|\Vert v_x\Vert_{L^{\infty}}\Vert n\Vert_{L^{\infty}} \le \frac{1}{2}\Vert u\Vert_{W^{1,1}}\Vert n\Vert^2_{L^{\infty}}..
	\end{align*}
	Then we attain
	\begin{align*}
		\left|\frac{\mathrm{d} n\left(t,q\left(t,x\right)\right)}{\mathrm{d}t}\right| \le \left(\frac{3}{2}\Vert u\Vert_{W^{1,1}}\| n\|_{L^{\infty}}+\left(\frac{\| \eta\|_{W^{1,1}}}{2}+1\right)^2\right)\Vert n\Vert_{L^{\infty}}.
	\end{align*}
	Thus, it is easy to see that
	\begin{align}\label{(5.3)}
		\Vert n\Vert_{L^{\infty}} \le \int_{0}^{t} \left(\frac{3}{2}\Vert u(s)\Vert_{W^{1,1}}\| n(s)\|_{L^{\infty}}+\left(\frac{\| \eta\|_{W^{1,1}}}{2}+1\right)^2\right)\Vert n\left(s\right)\Vert_{L^{\infty}}ds+\Vert n_0\Vert_{L^{\infty}}. 
	\end{align}
	Now by the system \eqref{(1.3)} and differentiating the system $\eqref{(1.3)}_1$ to $x$, we infer that
	\begin{align*}
		\begin{cases}
			\eta_t+\eta_x uv+ \eta u_xv+\eta uv_x +u_x v+uv_x=0,\\
			\eta_{xt}+\eta_{xx}uv+(\eta+1)u_{xx}v+(\eta+1)uv_{xx}+2(\eta+1)u_xv_x+2\eta_xu_xv+2\eta_xuv_x=0.
		\end{cases}
	\end{align*}
	It is easy to see that
\begin{align}\label{(5.4)} 
	\| \eta \|_{L^1} &\le \| \eta_x\|_{L^1}\| u\|_{L^{\infty}}\| v\|_{L^{\infty}} +(\| \eta\|_{L^{\infty}}+1)\| u_x\|_{L^1}\| v\|_{L^{\infty}}+ (\| \eta\|_{L^{\infty}}+1)\|u\|_{L^{1}}\|v_x\|_{L^{\infty}} \notag \\
	&\le \left(\| \eta\|_{W^{1,1}}+1\right)\|u\|_{W^{1,1}}\|n\|_{L^{\infty}}.
\end{align}
	Since we have
	\begin{align*}
		\lim_{\epsilon \to 0}\int_{\mathbb{R}} (\eta+1)u_{xx}v\eta_{x}(\eta_{x}^2+\epsilon)^{-\frac{1}{2}}dx &= \lim_{\epsilon \to 0}\int_{\mathbb{R}} (\eta+1)v\eta_{x}(\eta_{x}^2+\epsilon)^{-\frac{1}{2}}du_x
		\\& = \lim_{\epsilon \to 0} -\int_{\mathbb{R}} vu_x\eta_{x}^2(\eta_{x}^2+\epsilon)^{-\frac{1}{2}}+(\eta+1)u_xv_x\eta_{x}(\eta_{x}^2+\epsilon)^{-\frac{1}{2}}dx.
	\end{align*}
	We then get that
	\begin{align*}
		\lim_{\epsilon \to 0}\int_{\mathbb{R}} \eta_{xx}uv\eta_{x}(\eta_{x}^2+\epsilon)^{-\frac{1}{2}}dx = \lim_{\epsilon \to 0}\int_{\mathbb{R}} uv d(\eta_{x}^2+\epsilon)^{\frac{1}{2}}= \lim_{\epsilon \to 0}-\int_{\mathbb{R}} (\eta_{x}^2+\epsilon)^{\frac{1}{2}}(u_xv+uv_x)dx.
	\end{align*}
	Therefore, we can obtain that
	\begin{align}\label{(5.5)}
		\| \eta_{x}\|_{L^1} &\le \| (\eta+1)uv_{xx}\|_{L^1}+\|(\eta+1)v_x\|_{L^\infty}\|u_x\|_{L^1}+\|uv_x\|_{L^{\infty}}\|\eta_x\|_{L^1} \notag
		\\&\le (3\| \eta\|_{W^{1,1}}+3)\|n\|_{L^{\infty}}\| u\|_{W^{1,1}}.
	\end{align}
	Hence, together with \eqref{(5.4)} and \eqref{(5.5)}, it follows that
	\begin{align*}
		\| \eta_t\|_{W^{1,1}} \le (4\| \eta\|_{W^{1,1}}+4)\|n\|_{L^{\infty}}\| u\|_{W^{1,1}}.
	\end{align*}
	Integrating the above inequality with respect to $t$, we deduce that
	\begin{align}\label{(5.6)}
		\| \eta\|_{W^{1,1}} \le \int_{0}^{t} (4\| \eta(s)\|_{W^{1,1}}+4)\|n(s)\|_{L^{\infty}}\| u(s)\|_{W^{1,1}}ds+\| \eta_0\|_{W^{1,1}}.
	\end{align}
	Noting the system \eqref{(1.3)} and differentiating the system $\eqref{(1.3)}_2$ to $x$, we infer that
	\begin{align*}
		\begin{cases}
			u_t+uvu_x+p*\left(3vuu_x-uu_xv_{xx}+(\eta+1)^2u\right)+2p_x*uv_xu_x=0,\\ u_{xt}+vu_x^2-uv_xu_x+vuu_{xx}+p_x*\left(3vuu_x-uu_xv_{xx}+(\eta+1)^2u\right)+2p*uv_xu_x=0.
		\end{cases}
	\end{align*}
	As
	\begin{align*}
		\Vert u_t\Vert_{L^1}=\Vert uvu_x+p*\left(3vuu_x-uu_xv_{xx}+(\eta+1)^2u\right)+2p_x*uv_xu_x\Vert_{L^1},
	\end{align*}
	then applying the Young's inequality, one gets
	\begin{align*}
		&\Vert uvu_x\Vert_{L^1} \le \Vert u\Vert_{L^{\infty}}\Vert v\Vert_{L^{\infty}}\Vert u_x\Vert_{L^1},
		\\&\| p*((\eta+1)^2u)\|_{L^1} \le \| p\|_{L^1}\| \eta +1\|^2_{L^{\infty}}\| u\|_{L^1},
		\\&3\Vert p*uvu_x\Vert_{L^1} + \Vert p*uv_{xx}u_x\Vert_{L^1} \le \left(3\Vert v\Vert_{L^{\infty}} + \Vert v_{xx}\Vert_{L^{\infty}}\right)\Vert p\Vert_{L^1}\Vert u\Vert_{L^{\infty}}\Vert u_x\Vert_{L^1},
		\\&\Vert p_x*uv_xu_x\Vert_{L^1} \le \Vert p_x\Vert_{L^1}\Vert u\Vert_{L^{\infty}}\Vert v_x\Vert_{L^{\infty}}\Vert u_x\Vert_{L^1}.
	\end{align*}
	Therefore, we have
	\begin{align*}
		\frac{\mathrm{d}\Vert u\Vert_{L^1}}{\mathrm{d}t} \le \Vert u_t\Vert_{L^1} \le \left(4\Vert n\Vert_{L^{\infty}}\Vert u\Vert_{W^{1,1}}+\left(\frac{\| \eta\|_{W^{1,1}}}{2}+1\right)^2\right)\| u\|_{W^{1,1}}.
	\end{align*}
	Integrating the above inequity with respect to $t$, we thus get
	\begin{align}\label{(5.7)}
		\Vert u\Vert_{L^1} \le \Vert u_0\Vert_{L^1}+\int_{0}^{t} \left(4\Vert n(s)\Vert_{L^{\infty}}\Vert u(s)\Vert_{W^{1,1}}+\left(\frac{\| \eta\|_{W^{1,1}}}{2}+1\right)^2\right)\| u(s)\|_{W^{1,1}}ds.
	\end{align}
	As
	\begin{align*}
		\Vert u_x\Vert_{L^1}=\lim_{\epsilon \rightarrow 0} \left\langle u_x,u_x\left(u_x^2+\epsilon\right)^{-\frac{1}{2}}\right\rangle.
	\end{align*}
	Performing integration by parts, we deduce that
	\begin{align*}
		\lim_{\epsilon \rightarrow 0} \int_{\mathbb{R}} uvu_{xx}u_x\left(u_x^2+\epsilon\right)^{-\frac{1}{2}} dx &= \lim_{\epsilon \rightarrow 0} \int_{\mathbb{R}} uvd\left(u_x^2+\epsilon\right)^{\frac{1}{2}} 
		\\&= -\lim_{\epsilon \rightarrow 0} \int_{\mathbb{R}} \left(u_x^2+\epsilon\right)^{\frac{1}{2}}\left(uv_x+u_xv\right)dx 
		\\&= -\int_{\mathbb{R}} \left(u_x^2\right)^{\frac{1}{2}}\left(uv_x+u_xv\right)dx.
	\end{align*}
	We then have
	\begin{align*}
		\Vert u_{xt}\Vert_{L^1} \le \Vert p_x*\left(3uvu_x-uu_xv_{xx}+(\eta+1)^2u\right)\Vert_{L^1}+2\Vert p*uv_xu_x\Vert_{L^1}+2\Vert uu_xv_x\Vert_{L^1}.
	\end{align*}
	Now applying the Young's inequality, one gets
	\begin{align*}
		&\Vert uv_xu_x\Vert_{L^1} \le \Vert u\Vert_{L^{\infty}}\Vert v_x\Vert_{L^{\infty}}\Vert u_x\Vert_{L^1},
		\\&\Vert p*uv_xu_x\Vert_{L^1} \le \Vert p\Vert_{L^1}\Vert u\Vert_{L^{\infty}}\Vert v_x\Vert_{L^{\infty}}\Vert u_x\Vert_{L^1},
		\\&\| p_x*((\eta+1)^2u)\|_{L^1} \le \| p_x\|_{L^1}\| \eta +1\|^2_{L^{\infty}}\| u\|_{L^1},
		\\&3\Vert p_x*uvu_x\Vert_{L^1} + \Vert p_x*uv_{xx}u_x\Vert_{L^1} \le  \left( 3\Vert v\Vert_{L^{\infty}} + \Vert v_{xx}\Vert_{L^{\infty}} \right)\Vert p_x\Vert_{L^1}\Vert u\Vert_{L^{\infty}}\Vert u_x\Vert_{L^1}.
	\end{align*}
	Hence, we obtain that
	\begin{align*}
		\frac{\mathrm{d}\Vert u_x\Vert_{L^1}}{\mathrm{d}t} \le \Vert u_{xt}\Vert_{L^1} \le \left(\frac{9}{2}\Vert n\Vert_{L^{\infty}}\Vert u\Vert_{W^{1,1}}+ \left(\frac{\| \eta\|_{W^{1,1}}}{2}+1\right)^2\right)\| u\|_{W^{1,1}}.
	\end{align*}
	Integrating the above inequality with respect to $t$, we have
	\begin{align}\label{(5.8)}
		\Vert u_x\Vert_{L^1} \le \Vert u_{0,x}\Vert_{L^1}+\int_{0}^{t} \left(\frac{9}{2}\Vert n(s)\Vert_{L^{\infty}}\Vert u(s)\Vert_{W^{1,1}}+\left(\frac{\| \eta\|_{W^{1,1}}}{2}+1\right)^2\right)\| u(s)\|_{W^{1,1}}ds.
	\end{align}
	Now using \eqref{(5.7)} and \eqref{(5.8)}, we then get
	\begin{align}\label{(5.9)}
		\Vert u\Vert_{W^{1,1}} \le \Vert u_{0,x}\Vert_{L^1}+\int_{0}^{t} \left(\frac{17}{2}\Vert n(s)\Vert_{L^{\infty}}\Vert u(s)\Vert_{W^{1,1}}+2\left(\frac{\| \eta\|_{W^{1,1}}}{2}+1\right)^2\right)\| u(s)\|_{W^{1,1}}ds.
	\end{align}
	It then follows from \eqref{(5.3)},\eqref{(5.6)} and \eqref{(5.9)}, we attain that
	\begin{align*}
		&\Vert n\Vert_{L^{\infty}}+\frac{\| \eta\|_{W^{1,1}}}{2}+1+\Vert u\Vert_{W^{1,1}} 
        \\&\le \Vert n_0\Vert_{L^{\infty}}+\frac{\| \eta_0\|_{W^{1,1}}}{2}+1+\Vert u_0\Vert_{W^{1,1}}
		+5\int_{0}^{t}\left(\frac{\| \eta\|_{W^{1,1}}}{2}+1+\| n\|_{L^{\infty}}+\|u\|_{W^{1,1}}\right)^3ds. 
	\end{align*}
	Now we obtain
	\begin{align*}
		\Vert n\Vert_{L^{\infty}}+\frac{\| \eta\|_{W^{1,1}}}{2}+1+\Vert u\Vert_{W^{1,1}} \le 2\left(\frac{\| \eta_0\|_{W^{1,1}}}{2}+1+\Vert u_0\Vert_{W^{1,1}}+\Vert n_0\Vert_{L^{\infty}}\right),
	\end{align*}
	with
	\begin{align*}
		t \le T_1=\frac{1}{40\left(\frac{\| \eta_0\|_{W^{1,1}}}{2}+1+\Vert u_0\Vert_{W^{1,1}}+\Vert n_0\Vert_{L^{\infty}}\right)^{2}}.
	\end{align*}
	The proof is therefore complete.
	
\end{proof}

 \begin{prop}\label{Proposition 5.2}
 	Assume that $u_0 \in W^{1,1}\left(\mathbb{R}\right)$, $v_0 \in L^{\infty}\left(\mathbb{R}\right)$ and there exists a point $x_0$ such that $v_0\left(x_0\right) > 0.$ Let $T_0$ be the maximal existence time of the corresponding strong solution $\left(u,v\right)$ to system \eqref{(1.3)}. Then we have
 	\begin{align*}
 		v \ge \frac{v_0\left(x_0\right)}{2},
 	\end{align*}
 	with
 	\begin{align*}
 		t \le T_2=\frac{v_0\left(x_0\right)}{40\left(\frac{\| \eta_0\|_{W^{1,1}}}{2}+1+\Vert u_0\Vert_{W^{1,1}}+\Vert n_0\Vert_{L^{\infty}}\right)^3} \le T_1.
 	\end{align*} 
 \end{prop}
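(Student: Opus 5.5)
The claim is that $v$ stays at least $v_0(x_0)/2$ along the characteristic issued from $x_0$; the statement implicitly means $v(t,q(t,x_0))\ge \frac{v_0(x_0)}{2}$, which is what Theorem \ref{Theorem 1.4} needs. So the plan is to estimate $\frac{\mathrm{d}}{\mathrm{d}t}v(t,q(t,x_0))$ along the flow $q$ of \eqref{(5.2)} and integrate in time, using the a priori bound of Proposition \ref{Proposition 5.1}. Write
\[
K:=\frac{\| \eta_0\|_{W^{1,1}}}{2}+1+\Vert u_0\Vert_{W^{1,1}}+\Vert n_0\Vert_{L^{\infty}} .
\]

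First compute the derivative along characteristics. From $\eqref{(1.3)}_3$,
\[
\frac{\mathrm{d}}{\mathrm{d}t}v(t,q(t,x_0))=(v_t+uvv_x)(t,q)=-\,p*\bigl(3vuv_x+2vu_xv_{xx}+2v_x^2u_x+vu_{xx}v_x-(\eta+1)^2v\bigr)(t,q).
\]
To avoid $u_{xx}$, integrate by parts inside the convolution, writing $vv_xu_{xx}=(vv_xu_x)_x-(v_x^2+vv_{xx})u_x$. The derivative then falls on $p$, and $\|p_x\|_{L^\infty}\le \frac12$.

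Every remaining term contains one factor $u$ or $u_x$, which goes in $L^1$. The other factors are $v,v_x,v_{xx}$, bounded in $L^\infty$ by $\|n\|_{L^\infty}$ or $2\|n\|_{L^\infty}$ as shown in the proof of Proposition \ref{Proposition 5.1}. Since $\|p\|_{L^\infty}=\frac12$, Young's inequality ($L^\infty*L^1$) bounds each such term by
\[
C\|n\|_{L^\infty}^2\|u\|_{W^{1,1}}.
\]

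The forcing term $p*((\eta+1)^2v)$ has no $u$ factor, so it needs separate care. Bound it by $\|\eta+1\|_{L^\infty}^2\|v\|_{L^\infty}\le(\frac{\|\eta\|_{W^{1,1}}}{2}+1)^2\|n\|_{L^\infty}$, using $\|\eta\|_{L^\infty}\le\frac12\|\eta\|_{W^{1,1}}$.

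For $t\le T_1$, Proposition \ref{Proposition 5.1} makes all these quantities at most $2K$. Hence
\[
\Bigl|\frac{\mathrm{d}}{\mathrm{d}t}v(t,q(t,x_0))\Bigr|\le C_0K^3 ,
\]
with an explicit numerical constant $C_0$ obtained by counting terms (roughly $8^2\cdot$ small factors).

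Integrating gives $v(t,q(t,x_0))\ge v_0(x_0)-C_0K^3t$. This is $\ge v_0(x_0)/2$ provided $t\le v_0(x_0)/(2C_0K^3)$. The constants must be arranged so that this threshold is at least $T_2=v_0(x_0)/(40K^3)$.

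It remains to check $T_2\le T_1=1/(40K^2)$, which is equivalent to $v_0(x_0)\le K$. This holds because $v_0(x_0)\le\|v_0\|_{L^\infty}\le\|n_0\|_{L^\infty}\le K$.

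The main obstacle is the bookkeeping of constants, so that the crude bound really gives $C_0\le 20$. If the term count exceeds this, I would sharpen the estimates, using $\|p\|_{L^\infty}=\|p_x\|_{L^\infty}=\frac12$ and combining the $v_x^2u_x$ terms, rather than change the conclusion. A secondary point is justifying the integration by parts for strong solutions. This is handled by working at regularity $s>\frac52$ and then using density.
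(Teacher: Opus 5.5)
Your proposal is correct and follows the paper's proof. Along the characteristic from $x_0$ you move the derivative off $u_{xx}$ via $vv_xu_{xx}=(vv_xu_x)_x-(v_x^2+vv_{xx})u_x$, bound each $u$-term by Young's inequality with $\|p\|_{L^\infty}=\|p_x\|_{L^\infty}=\frac12$, bound the forcing by $(\frac{\|\eta\|_{W^{1,1}}}{2}+1)^2\|n\|_{L^\infty}$, and integrate using Proposition~\ref{Proposition 5.1}. Your lower-bound direction and your check of $T_2\le T_1$ via $v_0(x_0)\le\|n_0\|_{L^\infty}\le K$ are cleaner than the paper's write-up.

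On the constant: bounding each factor separately by $2K$ gives $24K^3>20K^3$, even after combining terms, so the sharpenings you list are not enough by themselves. Instead use that Proposition~\ref{Proposition 5.1} controls the sum $A+B+C\le 2K$, where $A=\|n\|_{L^\infty}$, $B=\frac{\|\eta\|_{W^{1,1}}}{2}+1$ and $C=\|u\|_{W^{1,1}}$. Then $2A^2C+AB^2\le A(A+B+C)^2\le 8K^3$, and the resulting threshold $v_0(x_0)/(16K^3)$ exceeds $T_2$.
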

 \begin{proof}
 	Consider the system \eqref{(1.3)} along the characteristics $q\left(t,x\right)$, we then have
 	\begin{align*}
 		v_t + p * (3 v u v_x + 2vu_x v_{xx} + 2v_x^2 u_x + vu_{xx}v_x - \left(\eta+1\right)^2v ) = 0.
 	\end{align*} 
 	By the Young's inequality, one gets
 	\begin{align*}
 		&\Vert p*uv_xv\Vert_{L^{\infty}} \le \Vert p\Vert_{L^{\infty}}\Vert v_x\Vert_{L^{\infty}}\Vert v\Vert_{L^{\infty}}\Vert u\Vert_{L^{1}},
 		\\&\|\left(\eta+1\right)^2v\|_{L^{\infty}} \le \left(\frac{\|\eta\|_{W^{1,1}}}{2}+1\right)^2\| n\|_{L^{\infty}},
 		\\&\Vert p*u_xv_xv_x\Vert_{L^{\infty}} + \Vert p*u_xv_{xx}v\Vert_{L^{\infty}} \le \left(\Vert v_x\Vert_{L^{\infty}}\Vert v_x\Vert_{L^{\infty}} +\Vert v\Vert_{L^{\infty}}\Vert v_{xx}\Vert_{L^{\infty}} \right)\Vert p\Vert_{L^{\infty}}\Vert u_x\Vert_{L^{1}},
 		\\&\Vert p_x*u_xv_xv\Vert_{L^{\infty}} \le \Vert p_x\Vert_{L^{\infty}}\Vert v_x\Vert_{L^{\infty}}\Vert v\Vert_{L^{\infty}}\Vert u_x\Vert_{L^{1}}.
 	\end{align*}
 	Thus, 
 	\begin{align*}
 		|v_t(t,q(t,x))| \le \frac{7}{2}\Vert n\left(t\right)\Vert^2_{L^{\infty}}\Vert u\left(t\right)\Vert_{W^{1,1}}+\left(\frac{\|\eta(t)\|_{W^{1,1}}}{2}+1\right)^2\| n(t)\|_{L^{\infty}}.
 	\end{align*}
 	Integrating the above inequality with respect to $t$, we obatin
 	\begin{align*}
 		v \le \int_{0}^{t}  \left(\frac{7}{2}\Vert n\left(s\right)\Vert^2_{L^{\infty}}\Vert u\left(s\right)\Vert_{W^{1,1}}+\left(\frac{\|\eta(s)\|_{W^{1,1}}}{2}+1\right)^2\| n(s)\|_{L^{\infty}}\right)ds+v_0\left(x_0\right). 
 	\end{align*}
 	Combining the Proposition \ref{Proposition 5.1} and the fact that $q(t,x)$ is diffeomorphism of $\mathbb{R}$, we deduce that
 	\begin{align*}
 		v(t,q(t,x_0)) \ge \frac{v_0\left(x_0\right)}{2},
 	\end{align*}
 	with
 	\begin{align*}
 		t \le T_2=\frac{v_0\left(x_0\right)}{40\left(\frac{\| \eta_0\|_{W^{1,1}}}{2}+1+\Vert u_0\Vert_{W^{1,1}}+\Vert n_0\Vert_{L^{\infty}}\right)^3} \le T_1.
 	\end{align*}
 	which completes the proof of the proposition.
 \end{proof}
 
 Now, we are in a position to show our main theorem. \\
 \textbf{Proof of Theorem \ref{Theorem 1.4}}:\\
  Differentiating the system \eqref{(1.3)} to $x$, we deduce that
 	\begin{align*}
 		u_{xt}+vu_x^2-uv_xu_x+vuu_{xx}+p_x*\left(3vuu_x-uu_xv_{xx}+(\eta+1)^2u\right)+2p*uv_xu_x=0.
 	\end{align*}
 	It is easy to check that
 	\begin{align*}
 		\left(u_{xt}+vu_x^2-uv_xu_x+p_x*\left(3vuu_x-uu_xv_{xx}+(\eta+1)^2u\right)+2p*uv_xu_x\right)\left(t,q\left(t,x\right)\right)=0.
 	\end{align*}
 	Then using the Young's inequality, one gets
 	\begin{align*}
 		&\Vert p*uv_xu_x\Vert_{L^{\infty}} \le \Vert p\Vert_{L^{\infty}}\Vert u\Vert_{L^{\infty}}\Vert v_x\Vert_{L^{\infty}}\Vert u_x\Vert_{L^1},
 		\\&\| p_x*((\eta+1)^2u)\|_{L^{\infty}} \le \| p_x\|_{L^{\infty}}\| \eta +1\|^2_{L^{\infty}}\| u\|_{L^1},
 		\\&3\Vert p_x*uvu_x\Vert_{L^{\infty}} + \Vert p_x*uv_{xx}u_x\Vert_{L^{\infty}} \le \left(3\Vert v\Vert_{L^{\infty}} + \Vert v_{xx}\Vert_{L^{\infty}}\right)\Vert p_x\Vert_{L^{\infty}}\Vert u\Vert_{L^{\infty}}\Vert u_x\Vert_{L^1}.
 	\end{align*}
 	By Proposition \ref{Proposition 5.1} and Proposition \ref{Proposition 5.2}, we obtain
 	\begin{align*}
 		&\frac{\mathrm{d}u_x\left(t,q\left(t,x_0\right)\right)}{\mathrm{d}t} 
 		\\&\le -\frac{v_0\left(t,x_0\right)}{4}u_x^2 + \frac{1}{v_0\left(x_0\right)}\|n\|_{L^{\infty}}^2\|u\|_{L^{\infty}}^2 + \frac{7}{4}\|n\|_{L^{\infty}}\|u\|_{W^{1,1}}^2+\frac{1}{2}\left(\frac{\|\eta\|_{W^{1,1}}}{2}+1\right)^2\|u\|_{W^{1,1}} \\
 		&\le -\frac{v_0\left(t,x_0\right)}{4}u_x^2 + \frac{1}{4v_0\left(x_0\right)}\|n\|_{L^{\infty}}^2\|u\|_{W^{1,1}}^2 + \frac{7}{4}\|n\|_{L^{\infty}}\|u\|_{W^{1,1}}^2+\frac{1}{2}\left(\frac{\|\eta\|_{W^{1,1}}}{2}+1\right)^2\|u\|_{W^{1,1}} \\
 		&\le -af^2+b_1,
 	\end{align*}
 	where 
 	\begin{align*}
 		&f:=u_x\left(t,q\left(t,x_0\right)\right),a:=\frac{v_0(x_0)}{4},\\
 		&b_1=\frac{1}{4v_0\left(x_0\right)}\left(\frac{\| \eta_0\|_{W^{1,1}}}{2}+1+\Vert u_0\Vert_{W^{1,1}}+\Vert n_0\Vert_{L^{\infty}}\right)^4 + 6\left(\frac{\| \eta_0\|_{W^{1,1}}}{2}+1+\Vert u_0\Vert_{W^{1,1}}+\Vert n_0\Vert_{L^{\infty}}\right)^3.
 	\end{align*}
 	Thanks to \eqref{(1.4)}. we thus deduce that
 	\begin{align*}
 		\frac{1}{\sqrt{b_1v_0\left(x_0\right)}}\ln\left(\frac{\sqrt{v_0\left(x_0\right)}f\left(0\right)-\sqrt{b_1}}{\sqrt{v_0\left(x_0\right)}f\left(0\right)+\sqrt{b_1}}\right) \le T_2.
 	\end{align*}
 	Applying Lemma \ref{Lemma 2.9}, we have
 	\begin{align*}
 		\lim_{t\rightarrow T_0}f(t)=-\infty,
 	\end{align*}
 	with
 	\begin{align*}
 		T_0 \le \frac{1}{\sqrt{b_1v_0\left(x_0\right)}}\ln\left(\frac{\sqrt{v_0\left(x_0\right)}f\left(0\right)-\sqrt{b_1}}{\sqrt{v_0\left(x_0\right)}f\left(0\right)+\sqrt{b_1}}\right) \le T_2 \le T_1.
 	\end{align*}
 	which along with Lemma \ref{Lemma 2.9} yields the desired result.\\
  
  \begin{rema}
  	For the variable $v$, we can also get a similar result.
  \end{rema}
  
  \section{Persistence properties}
  Motivated by \cite{chen2019persistence} and \cite{liu2025persistence}, we will show that the strong solution of system \eqref{(1.3)} will retain the corresponding properties within its lifespan, provided the initial data decay logarithmically, algebraically at infinity with the power $\beta \in (0,\infty)$. \\
 \textbf{Proof of Theorem \ref{Theorem 1.5}}:\\
 For convenience of writing, we let $M=\sup_{t\in [0,T]}\left({\|\rho(t,\cdot)\|_{H^{s-1}}+\| u(t,\cdot)\|_{H^s}+\| v(t,\cdot)\|_{H^s} }\right)$,
\begin{align*}
	F:=p * (3 u v u_x + 2uv_x u_{xx} + 2u_x^2 v_x + uv_{xx}u_x + \left(\eta+1\right)^2u),
\end{align*}
and
\begin{align*}
	G:=p * (3 v u v_x + 2vu_x v_{xx} + 2v_x^2 u_x + vu_{xx}v_x - \left(\eta+1\right)^2v).
\end{align*}
We then have 
\begin{align*}
	\| \rho(t)\|_{L^{\infty}}, \| \rho_x(t)\|_{L^{\infty}}, \|u(t)\|_{L^{\infty}}, \|u_x(t)\|_{L^{\infty}}, \|u_{xx}(t)\|_{L^{\infty}}, \|v(t)\|_{L^{\infty}}, \|v_x(t)\|_{L^{\infty}} \le M.
\end{align*}
Multiplying the first equation in system \eqref{(1.1)} by $\psi_N$, it follows that
\begin{align}\label{(6.1)}
	(\rho\psi_N)+\rho_xuv\psi_N+\rho u_xv\psi_N+ \rho uv_x\psi_N=0.
\end{align} 
Multiplying \eqref{(6.1)} by $\left| \rho\psi_N(x)\right|^{k-2}(\rho\psi_N(x))(k\le 2)$ and integrating the obtained equation over $\mathbb{R}$ with respect to $x$-variable, one has
\begin{align*}
	\frac{1}{k}\frac{\mathrm{d}}{\mathrm{d}t}\int_{\mathbb{R}} \left| \rho\psi_N\right|^k dx = -\int_{\mathbb{R}} \rho_x|\rho\psi_N|^{k-2}(\rho\psi_N)uv\psi_Ndx - \int_{\mathbb{R}} (u_xv+v_xu)|\rho\psi_N|^kdx.
\end{align*}
Hence, we get
\begin{align}\label{(6.2)}
	\frac{\mathrm{d}}{\mathrm{d}t}\|\rho\psi_N\|_{L^k}\le CM^2(\|u\psi_N(x)\|_{L^k}+\|v\psi_N(x)\|_{L^k}+\|\rho\psi_N(x)\|_{L^k}).
\end{align}
Similarly, for the second equation in system \eqref{(1.1)}, it is easy to see that
\begin{align}\label{(6.3)}
	\frac{\mathrm{d}}{\mathrm{d}t}\| u\psi_N\|_{L^k} \le C(M^2\|u\psi_N\|_{L^k}+\|F\psi_N\|_{L^k}).
\end{align}
Differentiating the second equation in system \eqref{(1.1)} with respect to $x$-variable yields,
\begin{align}\label{(6.4)}
	u_{xt}+vu_x^2-uv_xu_x+vuu_{xx}+F_x=0
\end{align}
And differentiating the second equation in system \eqref{(6.4)} with respect to $x$-variable yields,
\begin{align*}
	u_{xxt}+3vu_xu_{xx}-uu_xv_{xx} +vuu_{xxx}+F_{xx}=0
\end{align*}
As we observe that
\begin{align*}
	&\left| \int_{\mathbb{R}} |u_{xx}\psi_N|^{k-2}(u_{xx}\psi_N)(uv)\psi_N u_{xxx} dx\right|
    \\&= \left| \int_{\mathbb{R}} |u_{xx}\psi_N|^{k-2}(u_{xx}\psi_N)(uv)[(fu_{xx})_x-(u_{xx})(\psi_N)_x]dx \right|
	\\&= \left| \int_{\mathbb{R}} (uv)\left(\frac{|u_{xx}\psi_N|^k}{k}\right)_x - \int_{\mathbb{R}} |\psi_N u_{xx}|^{k-2}(\psi_Nu_{xx})(\psi_N)_x dx\right|
	\\&= \frac{1}{k}\left| \int_{\mathbb{R}} (uv)_x|\psi_Nu_{xx}|^k dx\right| + \gamma \left| \int_{\mathbb{R}} |\psi_Nu_{xx}|^{k-2}(\psi_Nu_{xx})(uv)(u_{xx}\psi_N) dx \right|
	\\&\le \frac{1}{k}M^2\| \psi_Nu_{xx}\|^k_{L^k} + \gamma M^2\| \psi_N u_{xx}\|^p_{L^k}.
\end{align*}
Hence, We similarly have
\begin{align*}
	\frac{\mathrm{d}}{\mathrm{d}t}\| u_x\psi_N\|_{L^k} \le CM^2(\|u\psi_N\|_{L^k}+\|u_x\psi_N\|_{L^k})+\|F_x\psi_N\|_{L^k}.
\end{align*}
and
\begin{align*}
	\frac{\mathrm{d}}{\mathrm{d}t}\| u_{xx}\psi_N\|_{L^k} \le CM^2(\|u\psi_N\|_{L^k}+\|u_x\psi_N\|_{L^k}+ \|u_{xx}\psi_N\|_{L^k} )+\|F_{xx}\psi_N\|_{L^k}.
\end{align*}
Note that if $g\in L^1(\mathbb{R})\cap L^{\infty}(\mathbb{R})$, then 
\begin{align*}
	\lim_{k \rightarrow \infty} \|g\|_{L^k}=\| g\|_{L^{\infty}}.
\end{align*}
By virtue of Lemma \ref{Lemma 2.10}, one can easily deduce that
\begin{align*}
	|F\psi_N| &= | p * (3 u v u_x + 2uv_x u_{xx} + 2u_x^2 v_x + uv_{xx}u_x + \left(\eta+1\right)^2u)\psi_N|
	\\&=\left| \frac{1}{2}\psi_N(x)\int_{R} \dfrac{e^{-|x-y|}}{\psi_N(y)}\psi_N(y)\left( 3 u v u_x + 2uv_x u_{xx} + 2u_x^2 v_x + uv_{xx}u_x + \left(\eta+1\right)^2u \right)  \right|
	\\&\le CM^2(\|\rho\psi_N\|_{L^{\infty}}+\|u\psi_N\|_{L^{\infty}}+\|u_x\|_{L^{\infty}}+\|u_{xx}\|_{L^{\infty}}).
\end{align*}
As $|p_x|\le |p|$ and $p_{xx}*f=p*f-f$, we have
\begin{align*}
	\|F_x\psi_N\|_{L^k} \le CM^2(\|\rho\psi_N\|_{L^{\infty}}+\|u\psi_N\|_{L^{\infty}}+\|u_x\psi_N\|_{L^{\infty}}+\|u_{xx}\psi_N\|_{L^{\infty}}),
\end{align*}
and
\begin{align*}
	\|F_{xx}\psi_N\|_{L^k} \le CM^2(\|\rho\psi_N\|_{L^{\infty}}+\|u\psi_N\|_{L^{\infty}}+\|u_x\psi_N\|_{L^{\infty}}+\|u_{xx}\psi_N\|_{L^{\infty}}).
\end{align*}
Therefore, it is easy to see that
\begin{align}\label{(6.5)}
	\frac{\mathrm{d}}{\mathrm{d}t}\bigl(\|u\psi_N\|_{L^{\infty}}
&+\|u_x\psi_N\|_{L^{\infty}}+\|u_{xx}\psi_N\|_{L^{\infty}}\bigr) \\
&\le C M^2\bigl(\|\rho\psi_N\|_{L^{\infty}}+\|u\psi_N\|_{L^{\infty}}+\|u_x\psi_N\|_{L^{\infty}}+\|u_{xx}\psi_N\|_{L^{\infty}}\bigr).
\end{align}
For the variable $v$, we similarly get
\begin{align}\label {(6.6)}
\frac{\mathrm{d}}{\mathrm{d}t}\bigl(\|v\psi_N\|_{L^{\infty}}
&+\|v_x\psi_N\|_{L^{\infty}}+\|v_{xx}\psi_N\|_{L^{\infty}}\bigr) \\
&\le C M^2\bigl(\|\rho\psi_N\|_{L^{\infty}}+\|v\psi_N\|_{L^{\infty}}
+\|v_x\psi_N\|_{L^{\infty}}+\|v_{xx}\psi_N\|_{L^{\infty}}\bigr).
\end{align}
Together with \eqref{(6.3)}, \eqref{(6.5)} and \eqref{(6.6)}, we then have
\begin{align}\label{(6.7)}
	\frac{\mathrm{d}}{\mathrm{d}t} \|(\rho,u,u_x,u_{xx},v,v_x,v_{xx})\psi_N\|_{L^{\infty}} \le C\|(\rho,u,u_x,u_{xx},v,v_x,v_{xx})\psi_N\|_{L^{\infty}},
\end{align}
where $C>0$ depends on $M$ and $\beta$. Applying Gronwall's inequality to \eqref{(6.7)}, for all $N\in \mathbb{R}^{+}$ and $t \in [0,T)$, it follows that
\begin{align*}
	\|(\rho,u,u_x,u_{xx},v,v_x,v_{xx})\psi_N\|_{L^{\infty}} \le e^{Ct}\|(\rho,u,u_x,u_{xx},v,v_x,v_{xx})\psi_N\|_{L^{\infty}}.
\end{align*}
Taking $N \to \infty$ in the above inequality, we complete the proof.\\	
	\textbf{Proof of Theorem \ref{Theorem 1.6}}:\\
Differentiating the $\eqref{(1.3)}_1$ with respect to $x$-variable, and multiplying the obtained equation by $\psi_N(x)$, we get
\begin{align}\label{(6.8)}
	\notag(\rho_x\psi_N)_t+&\rho_{xx}uv\psi_N+2\rho_x u_xv\psi_N+ 2\rho_x uv_x\psi_N \\&+ 2\rho u_xv_x\psi_N + \rho u_{xx}v\psi_N + \rho uv_{xx}\psi_N=0.
\end{align}
It is easy to see that
\begin{align}\label{(6.9)}
	\notag\frac{1}{k}\frac{\mathrm{d}}{\mathrm{d}t}\int_{\mathbb{R}} |\rho_x\psi_N|^k dx=&-\int_{\mathbb{R}} uv|\rho_x\psi_N(x)|^{k-2}(\rho_x\psi_N(x))\rho_{xx}\psi_N(x)dx
	\notag\\&-\int_{\mathbb{R}}\rho|\rho_x\psi_N(x)|^{k-2}(\rho_x\psi_N(x))(uv_{xx}+u_{xx}v)dx
	\notag\\&-2\int_{\mathbb{R}} |\rho_x\psi_N(x)|^{k-2}(\rho_x\psi_N(x))(\rho_xu_xv+\rho_xuv_x+\rho u_xv_x)dx
\end{align}
Note that $\rho_{xx}\psi_N(x)=(\rho_x\psi_N(x))_x-\rho_x(\psi_N)_x(x)$ and $|(\psi_N)_x(x)|\le \gamma \psi_N$ for almost every $x \in \mathbb{R}$, one obtains
\begin{align*}
    &\left|\int_{R} uv|\rho_x\psi_N(x)|^{k-2}(\rho_x\psi_N(x))\rho_{xx}\psi_N(x)dx  \right| \\
    &=\left| \frac{1}{k}\int_{\mathbb{R}} uv(|\rho_x\psi_N(x)|^k)_xdx 
       -\int_{\mathbb{R}} uv|\rho_x\psi_N(x)|^{k-2}(\rho_x\psi_N(x))\rho_x\psi_N'(x)dx \right| \\
    &\le\left|\frac{1}{k}\int_{\mathbb{R}}(uv)_x|\rho_x\psi_N(x)|^kdx \right|
       +\left|\gamma\int_{\mathbb{R}}uv|\rho_x\psi_N(x)|^kdx \right| \\
    &\le\left(\frac{2}{k}+\gamma\right)M^2\|\rho_x\psi_N(x)\|^k_{L^k}.
\end{align*}
Therefore, we have
\begin{align}\label{(6.10)}
	\frac{\mathrm{d}}{\mathrm{d}t}\|\rho_x\psi_N(x)\|_{L^k}\le CM^2(\|\rho\psi_N(x)\|_{L^k}+\|\rho_x\psi_N(x)\|_{L^k}),
\end{align}
with $C>0$ depends on $M$ and $\beta$.
Taking the limit as $x \to \infty$ in \eqref{(6.10)}, in view of \eqref{(6.7)}, we get
\begin{align*}
	\frac{\mathrm{d}}{\mathrm{d}t}\|(\rho,\rho_x,u,u_x,u_{xx},v,v_x,v_{xx})\psi_N\|_{L^{\infty}} \le C\|(\rho,u,u_x,u_{xx},v,v_x,v_{xx})\psi_N\|_{L^{\infty}}.
\end{align*}
 where $C>0$ depends on $M$ and $\beta$. Applying Gronwall's inequality, we can easily get the conclusion of the Theorem \ref{Theorem 1.6}.\\
 \textbf{Proof of Theorem \ref{Theorem 1.7}}:\\
 Integrating the first equation in \eqref{(1.1)} with respect to $t$-variable over the interval $[0,t]$, in follows that
 \begin{align}\label{(6.11)}
 	\rho(t,x)-\rho_{0}(x)+\int_{0}^{t} \rho_x uv(s,x)ds + \int_{0}^{t} \rho(u_x v+uv_x)(s,x)ds = 0.
 \end{align}
 By virtue of Theorem \ref{Theorem 1.6}, due to the assumption of the Theorem \ref{Theorem 1.7}, we have
 \begin{align*}
 	\rho(t,x),u(t,x),v(t,x),\rho_x(t,x),u_x(t,x),v_x(t,x) \sim O((\ln(e+\beta+|x|))^{-\gamma}),\quad |x|\to \infty,
 \end{align*}
 uniformly in the interval $[0,T_0]$ for some $T_0<T$. Therefore, we obtain
 \begin{align*}
 	\int_{0}^{t} \rho_x uv(s,x)ds, \int_{0}^{t} \rho(u_x v+uv_x)(s,x)ds \sim O((\ln(e+\beta+|x|))^{-3\gamma}) \sim  O((\ln(e+\beta+|x|))^{-\beta}),
 \end{align*}
 as $|x|\to \infty$.
 
 according to the assumption $\rho_0(x) \sim o((\ln(e+\beta+|x|))^{-\beta})$ as $|x|\to \infty$ and together with \eqref{(6.11)}, we can easily get the result. Hence, we complete the proof of Theorem \ref{Theorem 1.7}.
 
 By choosing the weighted function, we obtain the asymptotic behaviors for the solution of \eqref{(1.1)} at infinity when the initial data decay logarithmically. Next, we investigate the algebraic decay for the solution of \eqref{(1.1)}.\\ 
 \textbf{Proof of Theorem \ref{Theorem 1.8}}:\\
 Taking the weighted function $\phi_N(x)$ in Lemma \ref{Lemma 2.11}, by the method of estimating Theorem \ref{Theorem 1.5}, we can get the conclusion of Theorem \ref{Theorem 1.8}.

\section{Declarations}
\textbf{Funding}
Z. Yin was supported by the National Natural Science Foundation of China (No.12571261).\\
\textbf{Author Contributions}
S. Liu and Z. Yin contributed equally to this work.\\
\textbf{Data Availability Statement}
Data sharing not applicable to this article as no datasets were generated or analysed during the current study.\\
\textbf{Conflict of Interest}
The authors declare that they have no conflict of interest.

%(No.11671407 and No.11701586), the Macao Science and Technology Development Fund (No. 098/2013/A3), and Guangdong Province of China Special Support Program (No. 8-2015),
%and the key project of the Natural Science Foundation of Guangdong province (No. 2016A030311004).

%The authors thank the referee for valuable comments and suggestions.

\phantomsection
\addcontentsline{toc}{section}{\refname}
%Ìí¼Ó²Î¿¼ÎÄÏ×µ½ÊéÇ©£¬ºê°ü hyperref
\bibliographystyle{abbrv} %plain ,%alpha, %abbrv
\bibliography{blow-up2}

\end{document}